\definecolor{linkred}{rgb}{0.7,0.2,0.2}
\definecolor{linkblue}{rgb}{0,0.2,0.6}
\numberwithin{figure}{section}
\DeclareFontFamily{OMS}{rsfs}{\skewchar\font'60}
\DeclareFontShape{OMS}{rsfs}{m}{n}{<-5>rsfs5 <5-7>rsfs7 <7->rsfs10 }{}
\DeclareSymbolFont{rsfs}{OMS}{rsfs}{m}{n}
\DeclareSymbolFontAlphabet{\scr}{rsfs}
\DeclareSymbolFontAlphabet{\scr}{rsfs}
\DeclareFontFamily{U}{mathx}{\hyphenchar\font45}
\DeclareFontShape{U}{mathx}{m}{n}{
      <5> <6> <7> <8> <9> <10>
      <10.95> <12> <14.4> <17.28> <20.74> <24.88>
      mathx10
      }{}
\DeclareSymbolFont{mathx}{U}{mathx}{m}{n}
\DeclareMathAccent{\wcheck}{0}{mathx}{"71}
\DeclareMathOperator{\Aut}{Aut}
\DeclareMathOperator{\Hom}{Hom}
\DeclareMathOperator{\Pic}{Pic}
\DeclareMathOperator{\rank}{rank}
\DeclareMathOperator{\reg}{reg}
\DeclareMathOperator{\sEnd}{\sE\negthinspace \mathit{nd}}
\DeclareMathOperator{\sing}{sing}
\DeclareMathOperator{\Sym}{Sym}
\DeclareMathOperator{\tor}{tor}
\newcommand{\sA}{\scr{A}}
\newcommand{\sB}{\scr{B}}
\newcommand{\sC}{\scr{C}}
\newcommand{\sD}{\scr{D}}
\newcommand{\sE}{\scr{E}}
\newcommand{\sF}{\scr{F}}
\newcommand{\sG}{\scr{G}}
\newcommand{\sH}{\scr{H}}
\newcommand{\sJ}{\scr{J}}
\newcommand{\sL}{\scr{L}}
\newcommand{\sN}{\scr{N}}
\newcommand{\sO}{\scr{O}}
\newcommand{\sQ}{\scr{Q}}
\newcommand{\sS}{\scr{S}}
\newcommand{\sT}{\scr{T}}
\newcommand{\sW}{\scr{W}}
\newcommand{\cC}{\mathcal C}
\newcommand{\bA}{\mathbb{A}}
\newcommand{\bB}{\mathbb{B}}
\newcommand{\bC}{\mathbb{C}}
\newcommand{\bD}{\mathbb{D}}
\newcommand{\bE}{\mathbb{E}}
\newcommand{\bF}{\mathbb{F}}
\newcommand{\bN}{\mathbb{N}}
\newcommand{\bP}{\mathbb{P}}
\newcommand{\bQ}{\mathbb{Q}}
\newcommand{\bR}{\mathbb{R}}
\newcommand{\bX}{\mathbb{X}}
\newcommand{\bZ}{\mathbb{Z}}
\newcommand{\aB}{{\sf B}}
\newcommand{\aE}{{\sf E}}
\newcommand{\aF}{{\sf F}}
\theoremstyle{plain}
\newtheorem{thm}{Theorem}[section]
\newtheorem{cor}[thm]{Corollary}
\newtheorem{defn}[thm]{Definition}
\newtheorem{fact}[thm]{Fact}
\newtheorem{lem}[thm]{Lemma}
\newtheorem{prop}[thm]{Proposition}
\theoremstyle{remark}
\newtheorem{asswlog}[thm]{Assumption w.l.o.g.}
\newtheorem{claim}[thm]{Claim}
\newtheorem{c-n-d}[thm]{Claim and Definition}
\newtheorem{construction}[thm]{Construction}
\newtheorem{example}[thm]{Example}
\newtheorem{notation}[thm]{Notation}
\newtheorem{rem}[thm]{Remark}
\newtheorem{question}[thm]{Question}
\newtheorem*{rem-nonumber}{Remark}
\numberwithin{equation}{thm}
\setlist[enumerate]{label=(\thethm.\arabic*), before={\setcounter{enumi}{\value{equation}}}, after={\setcounter{equation}{\value{enumi}}}}
\newcommand{\into}{\hookrightarrow}
\newcommand{\wtilde}{\widetilde}
\newcommand{\what}{\widehat}
\newcommand\CounterStep{\addtocounter{thm}{1}\setcounter{equation}{0}}
\newcommand{\factor}[2]{\left. \raise 2pt\hbox{$#1$} \right/\hskip -2pt\raise -2pt\hbox{$#2$}}
\newcommand{\Preprint}[1]{#1}
\newcommand{\Publication}[1]{}
\newcommand{\subversionInfo}{}
\newcommand{\svnid}[1]{}
\newcommand{\approvals}[2][Approval]{}
\renewcommand{\phi}{\varphi}
\author{Daniel Greb}%
\address{Daniel Greb, Essener Seminar für Algebraische Geometrie und Arithmetik,
  Fakultät für Ma\-the\-matik, Universität Duisburg--Essen, 45117 Essen,
  Germany}%
\email{\href{mailto:daniel.greb@uni-due.de}{daniel.greb@uni-due.de}}
\urladdr{\href{http://www.esaga.uni-due.de/daniel.greb}{http://www.esaga.uni-due.de/daniel.greb}}
\author{Stefan Kebekus}%
\address{Stefan Kebekus, Mathematisches Institut, Albert-Ludwigs-Universität
  Freiburg, Ernst-Zermelo-Straße 1, 79104 Freiburg im Breisgau, Germany \&
  Freiburg Institute for Advanced Studies (FRIAS), Freiburg im Breisgau,
  Germany}%
\email{\href{mailto:stefan.kebekus@math.uni-freiburg.de}{stefan.kebekus@math.uni-freiburg.de}}
\urladdr{\href{https://cplx.vm.uni-freiburg.de}{https://cplx.vm.uni-freiburg.de}}
\author{Thomas Peternell}%
\address{Thomas Peternell, Mathematisches Institut, Universität Bayreuth,
  95440~Bayreuth, Germany}%
\email{\href{mailto:thomas.peternell@uni-bayreuth.de}{thomas.peternell@uni-bayreuth.de}}
\urladdr{\href{http://www.komplexe-analysis.uni-bayreuth.de}{http://www.komplexe-analysis.uni-bayreuth.de}}
\author{Behrouz Taji}%
\address{Behrouz Taji, University of Notre Dame, Department of Mathematics, 278
  Hurley, Notre Dame, IN 46556 USA.}%
\email{\href{mailto:btaji@nd.edu}{btaji@nd.edu}}
\urladdr{\href{http://sites.nd.edu/b-taji}{http://sites.nd.edu/b-taji}}
\thanks{DG was partially supported by the DFG-Collaborative Research Center
  SFB/TR 45 ``Periods, moduli spaces and arithmetic of algebraic varieties''.
  SK gratefully acknowledges support through a joint fellowship of the Freiburg
  Institute of Advanced Studies (FRIAS) and the University of Strasbourg
  Institute for Advanced Study (USIAS).  BT was partially supported by the
  DFG-Research Training Group GK1821.  Research was partially completed while SK
  and TP were visiting the National University of Singapore in 2017.}
\keywords{nonabelian Hodge theory, klt singularities, ball quotients, uniformisation}
\subjclass[2010]{32Q30, 14E20, 14E30, 53C07}
\title{Harmonic metrics on Higgs sheaves and uniformization of varieties of general type}
\date{\today}
\DeclareMathOperator{\can}{can}
\DeclareMathOperator{\discrep}{discrep}
\DeclareMathOperator{\Div}{Div}
\DeclareMathOperator{\End}{End}
\DeclareMathOperator{\Gal}{Gal}
\DeclareMathOperator{\Higgs}{\sf Higgs}
\DeclareMathOperator{\LSys}{\sf LSys}
\DeclareMathOperator{\pHiggs}{\sf pHiggs}
\DeclareMathOperator{\res}{res}
\DeclareMathOperator{\sLSys}{\sf sLSys}
\DeclareMathOperator{\TPIH}{\sf TPI-Higgs}
\DeclareMathOperator{\TPIL}{\sf TPI-locFree}
\DeclareMathOperator{\TPIR}{\sf TPI-refl}
\theoremstyle{plain}
\newtheorem{factdef}[thm]{Fact and Definition}
\theoremstyle{remark}
\newtheorem{cons}[thm]{Consequence}
\newtheorem{remnot}[thm]{Remark and Notation}
\begin{document}

\begin{abstract}
We prove a criterion for the existence of harmonic metrics on Higgs bundles that
are defined on smooth loci of klt varieties.  As one application, we resolve the
quasi-étale uniformisation problem for minimal varieties of general type to
obtain a complete numerical characterisation of singular quotients of the unit
ball by discrete, co-compact groups of automorphisms that act freely in
codimension one.  As a further application, we establish a nonabelian Hodge
correspondence on smooth loci of klt varieties.
\end{abstract}
\approvals{Behrouz & yes \\ Daniel & yes \\ Stefan & yes \\ Thomas & yes}

\maketitle
\tableofcontents

%
% Do not edit the following line.  The text is automatically updated by
% subversion.
%
\svnid{$Id: 01-intro.tex 1268 2019-02-26 19:31:12Z greb $}

\section{Introduction}
\subversionInfo

\subsection{Main result of this paper}
\approvals{Behrouz & yes \\ Daniel & yes \\ Stefan & yes \\ Thomas & yes}

The core notion of nonabelian Hodge theory, as pioneered by Corlette, Donaldson, Hitchin, and
Simpson, is certainly that of a harmonic bundle.  Most (if not all) important
results of this theory depend on existence results for harmonic metrics in
bundles over projective manifolds, which are usually established using highly
non-trivial analytic methods.

In view of the minimal model program, it is clear that these results should be
studied in the more general context of varieties with terminal or canonical
singularities or even for klt (= Kawamata log terminal) varieties.  In this
context, extending Simpson's theory \cite{MR1179076} from smooth projective
manifolds to minimal models, the paper \cite{GKPT17} established a natural
equivalence between the category of local systems and the category of
semistable, locally free Higgs sheaves with vanishing Chern classes on
projective varieties with klt singularities

\begin{thm}[\protect{\cite[Thm.~1.1]{GKPT17}}]\label{thm:klt}
  Let $X$ be a projective, klt variety.  Then, there exists an equivalence
  between the category of local systems on $X$ and the category of semistable,
  locally free Higgs sheaves with vanishing Chern classes on $X$.  \qed
\end{thm}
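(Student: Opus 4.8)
The plan is to deduce the stated equivalence from Simpson's nonabelian Hodge correspondence on smooth projective varieties, transporting everything to the smooth locus $U := X_{\reg}$ — whose complement has codimension $\geq 2$ because $X$ is klt — and then controlling the passage to a resolution. Throughout, ``local system on $X$'' means a finite-rank $\bC$-local system, equivalently a representation of $\pi_1(X)$, and ``vanishing Chern classes'' refers to the $\bQ$-Chern classes $\what{c}_1$ and $\what{c}_2$ of reflexive sheaves on klt spaces. Write $j \colon U \into X$ for the inclusion. Since $\codim_X(X \setminus U) \geq 2$, Hartogs extension makes $j^*$ and $\mathcal{F} \mapsto (j_* \mathcal{F})^{**}$ mutually quasi-inverse on reflexive coherent sheaves, and a Higgs field on a reflexive sheaf is determined by its restriction to $U$; hence reflexive Higgs sheaves on $X$ embed fully faithfully into Higgs sheaves on $U$. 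Likewise $\pi_1(U) \onto \pi_1(X)$, so restriction of local systems from $X$ to $U$ is fully faithful. It therefore suffices to identify the essential images of our two categories inside the corresponding categories on $U$, and to check that Simpson's equivalence on $U$ carries one onto the other.

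For the correspondence on $U$, I would fix a log resolution $\pi \colon \widetilde{X} \to X$, so that $E := \pi^{-1}(X_{\sing})$ is an SNC divisor and $\pi$ restricts to an isomorphism $\widetilde{X} \setminus E \cong U$. The theory of tame, purely imaginary harmonic bundles of Simpson and Mochizuki provides an equivalence of abelian categories between local systems on $U$ and parabolically semistable tame Higgs bundles on $(\widetilde{X}, E)$ with vanishing parabolic Chern classes, under which a local system on $U$ has trivial monodromy around a component $E_i$ of $E$ if and only if the parabolic structure along $E_i$ is trivial. A small loop around $E_i$ bounds a disc whose image under $\pi$ lies in $X$ and is centred in the codimension-$\geq 2$ set $X_{\sing}$, so it dies in $\pi_1(X)$; combined with Takayama's theorem that $\pi_1(\widetilde{X}) \cong \pi_1(X)$, this shows that loops around the $E_i$ normally generate $\ker\bigl(\pi_1(U) \to \pi_1(X)\bigr)$. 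Hence a local system on $U$ descends to $X$ precisely when all those parabolic structures are trivial.

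It then remains to match the two sides. Given a local system $\mathbb{V}$ on $X$, the above produces on $\widetilde{X} \setminus E$ a Higgs bundle with vanishing parabolic Chern classes and — since the monodromy around each $E_i$ is trivial — trivial parabolic structure; it extends to an honest locally free Higgs sheaf on $\widetilde{X}$ with vanishing Chern classes, is trivial along the $\pi$-fibres (its restriction to $U$ extends to the locally free sheaf underlying the flat bundle of $\mathbb{V}$), hence descends to a locally free Higgs sheaf on $X$, which is semistable (destabilising subsheaves restrict to $U$, and Higgs subsheaves of a harmonic bundle have nonpositive slope) and has vanishing $\bQ$-Chern classes (these intersection numbers are computed after pullback to $\widetilde{X}$, where they vanish). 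Conversely, from a semistable locally free Higgs sheaf with vanishing $\bQ$-Chern classes on $X$ one forms the reflexive pullback to $\widetilde{X}$; the crux is that vanishing of $\what{c}_1$ and $\what{c}_2$, together with semistability and a Bogomolov--Gieseker inequality on $(\widetilde{X}, E)$, forces the canonical parabolic extension to have trivial parabolic structure, so that one lands in the image of the first construction and obtains a local system on $U$ which descends to $X$. Finally one verifies that the two resulting functors are quasi-inverse and compatible with tensor products, duals, and short exact sequences, all inherited from $\widetilde{X}$ and $U$.

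The main obstacle is this last matching step: passing between the intrinsic conditions on the klt space $X$ (semistability and vanishing of $\what{c}_1, \what{c}_2$) and the conditions on the log-smooth model $(\widetilde{X}, E)$ to which the harmonic-bundle machinery applies (trivial parabolic structure and vanishing parabolic Chern classes). This needs a robust theory of $\bQ$-Chern classes of reflexive sheaves on klt spaces and of their behaviour under resolution, a Bogomolov-type inequality in the parabolic setting to rule out nonzero parabolic weights, and Takayama's local simple connectedness to descend local systems from $U$ to $X$; that one really uses the klt hypothesis, and not merely $\codim X_{\sing} \geq 2$, is already shown by the cone over an elliptic curve.
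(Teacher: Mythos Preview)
This theorem is not proved in the present paper: note the terminal \texttt{\textbackslash qed} in the statement itself, which marks it as a quotation of \cite[Thm.~1.1]{GKPT17}. So there is no ``paper's own proof'' to compare against here. That said, the route taken in \cite{GKPT17}, as reflected throughout this paper (see especially Proposition~\ref{prop:p1} and the proof sketch of Theorem~\ref{thm:klt-smooth1}), is genuinely different from yours: rather than working on a log resolution with parabolic structures, one passes to a \emph{maximally quasi-étale} cover $\gamma\colon Y\to X$ (existence from \cite[Thm.~1.5]{GKP13}) on which $\widehat{\pi}_1(Y_{\reg})\to\widehat{\pi}_1(Y)$ is an isomorphism, so that every local system on $Y_{\reg}$ extends to $Y$; the correspondence is built on $Y$ and then descended $G$-equivariantly to $X$.

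Your argument, by contrast, has a real gap at the descent step from the resolution $\widetilde{X}$ to $X$. Given a local system $\mathbb{V}$ on $X$, Simpson's correspondence on the smooth projective $\widetilde{X}$ produces a Higgs bundle $(\mathcal{F},\theta)$ whose \emph{holomorphic structure is not that of the flat bundle} --- the two agree only as $\mathcal{C}^\infty$ bundles. So the parenthetical ``its restriction to $U$ extends to the locally free sheaf underlying the flat bundle of $\mathbb{V}$'' does not show that $\mathcal{F}|_U$ extends to a locally free sheaf on $X$, and ``trivial along the $\pi$-fibres'' is neither established nor sufficient for descent of a holomorphic bundle through a birational morphism. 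Concretely, you need to know that the reflexive extension of $\mathcal{F}|_U$ to $X$ is locally free, and nothing in your outline forces this; it is precisely here that \cite{GKPT17} uses the maximally quasi-étale cover and a descent theorem for vector bundles rather than a resolution. In the converse direction there is a second difficulty you pass over: pulling back a Higgs field $\theta\colon\mathcal{E}\to\mathcal{E}\otimes\Omega^{[1]}_X$ to $\widetilde{X}$ lands in $\pi^*\mathcal{E}\otimes\pi^*\Omega^{[1]}_X$, not in $\pi^*\mathcal{E}\otimes\Omega^1_{\widetilde{X}}$, and the ample class with respect to which $(\mathcal{E},\theta)$ is semistable does not pull back to an ample class on $\widetilde{X}$, so semistability of the pullback is not immediate.
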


However, for geometric applications we also need to understand (flat) locally
free Higgs sheaves on the smooth locus $X_{\reg} $ of a klt variety $X$ which
extend to $X$ as \emph{reflexive} Higgs sheaf rather than as a \emph{locally
  free} Higgs sheaf.  Thanks to the work of Simpson, Jost-Zuo, Mochizuki and
others, there is by now a developed theory of harmonic bundles for non-compact,
quasi-projective manifolds $X°$, establishing the existence of an (essentially
unique) harmonic metric on a given semisimple flat bundle $(E,∇_E)$ on $X°$.  In
particular, $E$ inherits a holomorphic structure and a Higgs field.  In this
context, considering a compactification $X$ of $X°$ by a simple normal crossing
divisor, the notion of a tame and purely imaginary bundle (with respect to the
compactification) play a decisive role.  In case of a projective klt variety $X$
with smooth locus $X° = X_{\reg}$, the situation simplifies to some extent since
$X ∖ X°$ has codimension at least $2$, once one is willing to pay the price that
$X$ is singular.  Our point is that the singularities arising from the minimal
model program are mild enough to still obtain a nonabelian Hodge theory on
$X_{\reg}$ that can be formulated in down-to-earth terms.  Our main result can
be seen as an existence result for harmonic metrics on bundles that are defined
on smooth loci of klt varieties.

\begin{thm}[\protect{= Theorem~\vref{thm:Qflat1new}}]\label{mainthm}
  Let $X$ be a projective klt space of dimension $n ≥ 2$ and $H ∈ \Div(X)$ be
  ample.  Let $(ℰ_{X_{\reg}}, θ_{ℰ_{X_{\reg}}})$ be an reflexive Higgs sheaf on
  $X_{\reg}$ and denote the reflexive extension of $ℰ_{X_{\reg}}$ to $X$ by
  $ℰ_X$.  Then, the following statements are equivalent.
  \begin{enumerate}
  \item The Higgs sheaf $(ℰ_{X_{\reg}}, θ_{ℰ_{X_{\reg}}})$ is (poly)stable with
    respect to $H$ and the $ℚ$-Chern characters satisfy
    $$
    \what{ch}_1(ℰ_X)·[H]^{n-1} = 0 \quad\text{and}\quad
    \what{ch}_2(ℰ_X)·[H]^{n -2} = 0.
    $$
    
  \item The sheaf $ℰ_{X_{\reg}}$ is locally free,
    $(ℰ_{X_{\reg}}, θ_{ℰ_{X_{\reg}}})$ is induced by a tame, purely imaginary
    harmonic bundle whose associated flat bundle is
    (semi)simple.
  \end{enumerate}
\end{thm}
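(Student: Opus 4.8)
The plan is to prove the two implications in $(1) ⇔ (2)$ separately, using the theory of tame harmonic bundles on quasi-projective manifolds (Simpson, Jost--Zuo, Mochizuki), the theory of $ℚ$-Chern classes on klt spaces, and the extension theorem for reflexive differential forms. For $(2) ⇒ (1)$, assume $(ℰ_{X_{\reg}}, θ)$ comes from a tame, purely imaginary harmonic bundle with (semi)simple flat bundle. Then the underlying Higgs bundle is (poly)stable of slope zero on the quasi-projective manifold $X_{\reg}$, and since $X ∖ X_{\reg}$ has codimension at least two, slopes are computed on $X_{\reg}$, so (poly)stability with respect to $H$ passes verbatim to $ℰ_X$. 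For the vanishing of the $ℚ$-Chern characters, pass to a log resolution $π\colon \wtilde X → X$ with simple normal crossing exceptional divisor $E$: tameness and pure-imaginariness make all parabolic weights of Mochizuki's prolongation along $E$ equal to zero, so its parabolic Chern classes agree with the Chern classes computed from the harmonic metric by Chern--Weil theory, and these vanish by flatness of the associated connection; the compatibility of $ℚ$-Chern classes on $X$ with parabolic Chern classes on $(\wtilde X, E)$ --- available because $E$ contracts to a set of codimension at least two --- then yields $\what{ch}_1(ℰ_X)·[H]^{n-1}=0$ and $\what{ch}_2(ℰ_X)·[H]^{n-2}=0$.

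For $(1) ⇒ (2)$, the first task is local freeness of $ℰ_{X_{\reg}}$. (Poly)stability, $\what{ch}_1(ℰ_X)·[H]^{n-1}=0$, and the Hodge index theorem give $\what{ch}_1(ℰ_X)^2·[H]^{n-2} ≤ 0$, so $\what{ch}_2(ℰ_X)·[H]^{n-2}=0$ turns the $ℚ$-Bogomolov--Gieseker inequality on $X$ into an equality; its equality case --- a Higgs analogue of Bando--Siu's theorem that a polystable reflexive sheaf of vanishing discriminant is locally free --- gives local freeness on $X_{\reg}$. Next, fix a log resolution $π\colon \wtilde X → X$ with simple normal crossing exceptional divisor $E$ and use the extension theorem for reflexive differentials on klt spaces to send $θ_X$ to a logarithmic Higgs field along $E$ with nilpotent residues, which is possible because $θ_X$ extends over $X ∖ X_{\reg}$, a set of codimension at least two; on a suitable locally free model of the reflexive pullback of $ℰ_X$ this produces a Higgs bundle on $(\wtilde X, E)$, necessarily with trivial parabolic structure since $X$ carries no codimension-one boundary on which to record weights. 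By the $ℚ$-Chern/parabolic-Chern dictionary, this Higgs bundle is (poly)stable with respect to $\pi^* H$ and has vanishing parabolic degree and vanishing parabolic second Chern character.

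Mochizuki's Kobayashi--Hitchin correspondence for tame parabolic Higgs bundles now produces a pluriharmonic metric on $(\wtilde X, E)$, tame by construction; restricting to $X_{\reg}$ gives a harmonic metric inducing $(ℰ_{X_{\reg}}, θ)$. The trivial parabolic structure and nilpotent residues make it purely imaginary, and (poly)stability of the Higgs bundle corresponds, under the harmonic-bundle dictionary, to (semi)simplicity of the associated flat bundle; this establishes $(2)$.

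The decisive --- and most delicate --- point is the two-way bookkeeping that identifies the purely numerical condition in $(1)$, phrased with $ℚ$-Chern characters on the singular space $X$, with the vanishing of parabolic Chern classes on $(\wtilde X, E)$ needed to invoke Mochizuki's existence theorem, all while controlling the discrepancy between $ℰ_X$ and a locally free model of its pullback over the exceptional locus. Running the Bando--Siu argument for local freeness of $ℰ_{X_{\reg}}$ in the klt setting, \emph{before} any metric is at hand, is the other genuinely hard step.
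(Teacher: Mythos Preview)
Your approach is genuinely different from the paper's, and several of the steps you outline do not go through as written.

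For $(1) \Rightarrow (2)$, the paper does \emph{not} work on a log resolution at all. Instead it first cuts down to a surface (using a restriction criterion for $\TPIH$, their Corollary~\ref{cor:restr}, which also delivers local freeness as part of the conclusion), then exploits that klt surface singularities are quotient singularities: one passes to a global Cohen--Macaulay \emph{cover} $\what X \to X$ and a $G$-equivariant resolution $\wtilde X \to \what X$, pulls the Higgs bundle back to obtain a $G$-stable Higgs bundle with vanishing Chern classes on the smooth projective $\wtilde X$, applies Simpson's Hermitian--Yang--Mills theorem there to get a $G$-invariant harmonic metric, descends the resulting \emph{flat connection} to $X_{\reg}$ via a descent lemma for $G$-invariant connections, and finally invokes Jost--Zuo to produce the tame purely imaginary metric on $X_{\reg}$, comparing via uniqueness. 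The orbifold cover is essential: stability transfers cleanly to a $G$-stable object, and Chern classes are computed exactly where the $ℚ$-Chern classes live by definition.

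Your route via a log resolution and Mochizuki's parabolic Kobayashi--Hitchin correspondence has real gaps. First, the ``Bando--Siu for Higgs'' step you need for local freeness on $X_{\reg}$ is not available as a black box in this singular setting; you acknowledge it as ``genuinely hard'' but do not prove it, and the paper deliberately avoids it by obtaining local freeness \emph{a posteriori} from the restriction criterion. Second, stability of a locally free model of $\pi^{[*]}ℰ_X$ with respect to $\pi^*H$ (which is only big and nef) is not automatic: new destabilising subsheaves can appear along the exceptional divisor, and this is precisely why the paper uses a finite cover rather than a resolution. Third, your ``$ℚ$-Chern/parabolic-Chern dictionary'' is asserted, not proved: the $ℚ$-Chern classes here are defined via the codimension-two quotient structure, and their identification with parabolic Chern classes on $(\wtilde X, E)$ for an \emph{arbitrary} klt $X$ is not in the literature you can cite. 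Finally, the claim that the induced parabolic structure along $E$ is trivial ``since $X$ carries no codimension-one boundary'' is not a valid argument: $E$ \emph{is} codimension one in $\wtilde X$, and the weights there encode the singularity, not the boundary of $X$.

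For $(2) \Rightarrow (1)$ your idea is closer in spirit, but the paper again avoids the log resolution: it pulls back to a maximally quasi-\'etale cover $Y \to X$, where the local system extends globally and the nonabelian Hodge correspondence on the klt space $Y$ gives a locally free Higgs sheaf with vanishing Chern classes; the $ℚ$-Chern vanishing on $X$ then follows by functoriality under quasi-\'etale pullback.
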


\begin{rem}
  The symbols $\widehat{ch}_•$ that appear in Theorem~\ref{mainthm} denote
  \emph{$ℚ$-Chern characters}, which exist because klt varieties have quotient
  singularities in codimension two.  We refer to
  Section~\ref{subsect:klt_and_Chern} and \cite[Sect.~1.7]{GKT16} for a
  discussion, and to \cite[Sect.~3]{GKPT15} for a proper definition.
\end{rem}
  
We illustrate the usefulness of Theorem~\ref{mainthm} with two applications.

\subsection{Application: a nonabelian Hodge correspondence for local systems on $X_{\reg}$}
\approvals{Behrouz & yes \\ Daniel & yes \\ Stefan & yes \\ Thomas & yes}

The first application pertains to the nonabelian Hodge correspondence.  As
already mentioned, Theorem~\ref{thm:klt} extends the classic correspondence to
projective varieties with klt singularities, in particular to minimal models of
varieties of general type.

As one immediate consequence of Theorem~\ref{mainthm}, we find that our singular
varieties exhibit \emph{two} nonabelian Hodge correspondences, one linking local
systems on the singular space $X$ with locally free Higgs sheaves there, and one
linking local systems on the smooth locus $X_{\reg}$ with Higgs sheaves that may
acquire certain singularities along the singular locus of $X$.  One of the key
features of the theory is that the two correspondences coincide after passing to
a finite quasi-étale cover $Y → X$.  We refer to reader to Section~\ref{sec:klt}
for precise formulations.

\subsection{Application: quasi-étale uniformisation}
\approvals{Behrouz & yes \\ Daniel & yes \\ Stefan & yes \\ Thomas & yes}

One of the cornerstones in the geometry of manifolds of general type is
certainly the Miyaoka-Yau inequality.  In its classical form, it states that
$$
\bigl( 2(n+1)· c_2(X) - n · c_1(X)² \bigr) · [K_X]^{n-2} ≥ 0,
$$
provided the canonical bundle $K_X$ of the $n$-dimensional projective manifold
$X$ is ample.  In case of equality, it is a consequence of the existence of a
Kähler-Einstein metric on $X$, proven by Aubin and Yau, that the universal cover
of $X$ is the unit ball in $ℂ^n$.

Again, in view of the minimal model program it is clear that these results
should be studied in the more general context of varieties with terminal,
canonical, or even klt singularities.  In this context, the Miyaoka-Yau
inequality has been generalised as follows; the formulation again uses $ℚ$-Chern
classes as constructed in \cite[Sect.~3]{GKPT15}.

\begin{thm}[\protect{$ℚ$-Miyaoka-Yau inequality, \cite[Thm.~1.1]{GKPT15} or \cite[Thm.~1.5]{GKT16}}]\label{thm:QMY}
  Let $X$ be an $n$-dimensional, projective, klt variety whose canonical class
  is big and nef.  Then,
  $$
  \bigl( 2(n+1)· \what{c}_2(𝒯_X) - n · \what{c}_1(𝒯_X)² \bigr) · [K_X]^{n-2} ≥ 0.  \eqno\qed
  $$
\end{thm}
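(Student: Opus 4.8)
The plan is to deduce the $ℚ$-Miyaoka--Yau inequality from a Bogomolov--Gieseker inequality for $ℚ$-Chern classes of $K_X$-semistable reflexive sheaves on klt spaces, applied to the reflexive cotangent sheaf $\Omega_X^{[1]} := (\Omega_X^1)^{**}$, and then to upgrade the resulting Bogomolov constant to the sharp Miyaoka--Yau constant by a device of Miyaoka. Before starting, I would deal with the case distinction ``$K_X$ ample'' versus ``$K_X$ nef and big'': the base-point-free theorem provides the canonical model $\rho \colon X \to X_{\mathrm{can}}$ with $\rho^* K_{X_{\mathrm{can}}} = K_X$ and $K_{X_{\mathrm{can}}}$ ample, but comparing the two $ℚ$-Chern numbers along the crepant contraction $\rho$ requires some care. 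I would instead avoid the reduction altogether and run the argument below directly with the nef and big class $K_X$, approximating it by the ample classes $K_X + \varepsilon A$ so that the $ℚ$-Chern classes remain fixed throughout.

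The key geometric input is that $\Omega_X^{[1]}$ is semistable with respect to $K_X$ -- equivalently, that $\sT_X$ is. This is where minimality and the general-type hypothesis genuinely enter: Miyaoka-type generic semipositivity, in the form valid on klt spaces, shows that $\Omega_X^{[1]}$ is generically nef, and combining this with the nefness and bigness of $\what{c}_1(\Omega_X^{[1]}) = [K_X]$ -- restricting to general complete-intersection curves cut out by large multiples of $K_X$ and using Mehta--Ramanathan -- yields $K_X$-semistability. Should the purely algebraic route prove delicate here, one can instead invoke the singular K\"ahler--Einstein metric of Eyssidieux--Guedj--Zeriahi on $X_{\reg}$: being Hermite--Einstein, it forces polystability of $\sT_{X_{\reg}}$, hence of $\sT_X$.

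Next I would establish the $ℚ$-Bogomolov--Gieseker inequality: for every reflexive sheaf $\sF$ on $X$ that is semistable with respect to $K_X$,
$$
\bigl(2\rank(\sF)\cdot\what{c}_2(\sF) - (\rank(\sF)-1)\cdot\what{c}_1(\sF)^2\bigr)\cdot[K_X]^{n-2} \geq 0 .
$$
The proof restricts $\sF$ to a general complete-intersection surface $S \subset X$, which is smooth away from finitely many quotient singularities since klt singularities are quotient in codimension two; there $\sF|_S$ is locally free and, by Mehta--Ramanathan, still semistable, so Langer's Bogomolov inequality on the klt surface $S$ applies, and one relates $\what{c}_i(\sF)\cdot[K_X]^{n-2}$ to the Chern numbers of $\sF|_S$ as in \cite{GKPT15}. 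Equivalently, one pulls back to a suitable finite quasi-\'etale or global index-one cover, on which the $ℚ$-Chern classes become honest Chern classes on a space with quotient singularities and Langer's inequality applies directly.

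Applying this with $\sF = \Omega_X^{[1]}$ only gives $\bigl(2n\,\what{c}_2(\sT_X) - (n-1)[K_X]^2\bigr)\cdot[K_X]^{n-2}\geq 0$, which is strictly weaker than the assertion. To recover the sharp constant I would pass to $\bP\bigl(\Omega_X^{[1]}\bigr)$ over $X_{\reg}$ with projection $p$ and tautological class $\xi$; semistability of $\Omega_X^{[1]}$ makes $n\xi - p^*[K_X]$ pseudoeffective on general curve slices, and expanding the resulting non-negative number $\bigl(n\xi - p^*[K_X]\bigr)\cdot\xi^{\,n+1}\cdot p^*[K_X]^{n-3}$ in terms of Segre -- hence Chern -- classes of $\Omega_X^{[1]}$ produces exactly $\bigl(2(n+1)\what{c}_2(\sT_X) - n[K_X]^2\bigr)\cdot[K_X]^{n-2}\geq 0$ (for $n=2$ the surface case of the inequality is classical, due to Langer and others). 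A more analytic alternative is to integrate the Chern--Weil forms of the K\"ahler--Einstein metric over $X_{\reg}$, using that its klt-controlled behaviour near $X_{\mathrm{sing}}$ makes these integrals compute the $ℚ$-Chern numbers with no boundary terms, and then to invoke the pointwise identity expressing $\bigl(2(n+1)c_2 - n c_1^2\bigr)\wedge\omega^{n-2}$ as a non-negative multiple of the squared norm of the trace-free part of the curvature. I expect the main obstacles to be the semistability of $\Omega_X^{[1]}$, which truly uses that $X$ is a minimal model of general type and, in the cleanest treatment, the fine asymptotics of the singular K\"ahler--Einstein metric, and the bookkeeping of $ℚ$-Chern classes throughout: ensuring that restriction to general complete-intersection subvarieties and passage to quasi-\'etale covers are compatible with the $ℚ$-Chern formalism and that no codimension-two correction terms appear.
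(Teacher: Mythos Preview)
First, note that this theorem is not proved in the present paper; it is quoted from the earlier work \cite{GKPT15} and \cite{GKT16}, as the trailing $\qed$ in the statement indicates. So there is no in-paper argument to compare against directly, but one can compare with the method of those references, which is also visible in how Section~\ref{sec:potbrv} of this paper uses the result.

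Your overall architecture---establish semistability, prove a $\bQ$-Bogomolov--Gieseker inequality by restriction to complete-intersection surfaces, and then sharpen the constant---is the right shape, and the handling of $\bQ$-Chern classes and the nef-versus-ample issue are sound. The genuine gap is in Step~4, the ``upgrade'' via the projectivised cotangent bundle. The asserted identity simply does not hold: for $n=3$, pushing forward $(n\xi - p^*[K_X])\cdot\xi^{\,n+1}$ using the Grothendieck relation gives $2[K_X]^3 - 5\,\what{c}_2\cdot[K_X] + 3\,\what{c}_3$, which involves $\what{c}_3$ and is not the desired $8\,\what{c}_2\cdot[K_X] - 3[K_X]^3$. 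More conceptually, Bogomolov--Gieseker for a rank-$n$ semistable sheaf can only ever produce the constant $(n-1)/n$; no amount of projective-bundle manipulation of $\Omega^{[1]}_X$ alone will yield $n/(n+1)$. Miyaoka's original device is specific to surfaces, where the two constants coincide.

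The approach taken in \cite{GKPT15}---and echoed in Section~\ref{sec:potbrv} here---is to apply the $\bQ$-Bogomolov--Gieseker inequality not to $\Omega^{[1]}_X$ but to the rank-$(n+1)$ \emph{Higgs} sheaf $\bigl(\Omega^{[1]}_X \oplus \sO_X,\, \theta\bigr)$, with $\theta(a,b) = (0,1)\otimes a$. This sheaf is not semistable as an ordinary sheaf (the summand $\sO_X$ destabilises it), but it \emph{is} $K_X$-stable as a Higgs sheaf; this is \cite[Cor.~7.2]{GKPT15}. Since it has rank $n+1$, first $\bQ$-Chern class $[K_X]$, and second $\bQ$-Chern class $\what{c}_2(\sT_X)$, the Bogomolov--Gieseker inequality for stable Higgs sheaves delivers the Miyaoka--Yau constant in a single stroke. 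Your analytic alternative via the singular K\"ahler--Einstein metric is a legitimate second route, but it is not the one used in the cited references and, as you note yourself, depends on delicate asymptotics near $X_{\sing}$.
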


The reader is encouraged to also have a look at \cite{GT16}, where related
inequalities for pairs are discussed.

Our new quasi-étale uniformisation result for varieties of general type may then
be formulated as follows.

\begin{thm}[Quasi-étale uniformisation by the unit ball]\label{thm:ballrevisited}
  Let $X$ be an $n$-dimensional, projective, klt variety whose canonical class
  is big and nef.  If equality holds in the $ℚ$-Miyaoka-Yau inequality, i.e.,
  \begin{equation}\label{eq:MY}
    \bigl( 2(n+1)· \what{c}_2(𝒯_X) - n · \what{c}_1(𝒯_X)² \bigr) · [K_X]^{n-2} =0,
  \end{equation}
  then the canonical model of $X$ admits a quasi-étale\footnote{See
    \cite[Def.~2.11]{GKPT15} for the definition of "quasi-étale".}, Galois cover
  $Y → X_{\can}$ by a projective manifold $Y$ whose universal cover is the unit
  ball.
\end{thm}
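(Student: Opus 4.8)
The plan is to attach to $X_{\can}$ its ``uniformising'' reflexive Higgs sheaf and feed it into Theorem~\ref{mainthm}. First I would reduce to the case $X=X_{\can}$, so that $K_X$ is ample: the canonical model exists because $K_X$ is big and nef, and the equality \eqref{eq:MY} descends to it. After possibly replacing $X$ by a finite quasi-étale cover — which is harmless, since the Galois closure over $X_{\can}$ of a quasi-étale cover of a quasi-étale cover is again finite, quasi-étale and Galois, and a finite étale cover of a smooth ball quotient is again one — or else working throughout with $\mathbb{Q}$-twisted reflexive sheaves, I may choose a line bundle $L$ on $X_{\reg}$ with $L^{\otimes(n+1)}\cong\omega_{X_{\reg}}$ and form
\[
  \sE_{X_{\reg}} := L \oplus \bigl(L\otimes T_{X_{\reg}}\bigr), \qquad \theta\colon \sE_{X_{\reg}} \to \sE_{X_{\reg}}\otimes\Omega^1_{X_{\reg}},
\]
where $\theta$ vanishes on the second summand and on the first equals $\mathrm{id}_L$ tensored with the tautological section of $T_{X_{\reg}}\otimes\Omega^1_{X_{\reg}} = \sEnd(T_{X_{\reg}})$; then $\theta\wedge\theta=0$, since $\theta$ maps the first summand into the second. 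Write $\sE_X$ for the reflexive extension of $\sE_{X_{\reg}}$ to $X$.

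Next I would verify hypothesis~(1) of Theorem~\ref{mainthm} with $H:=K_X$. Since $\what{c}_1(L)=-\tfrac1{n+1}\what{c}_1(T_X)$, one has $\what{c}_1(\sE_X)=(n+1)\what{c}_1(L)+\what{c}_1(T_X)=0$, hence $\what{ch}_1(\sE_X)\cdot[H]^{n-1}=0$. A short computation, expanding $\what{c}(\sE_X)=\what{c}(L)\cdot\what{c}(L\otimes T_X)$, gives
\[
  2(n+1)\cdot\what{c}_2(\sE_X)\cdot[K_X]^{n-2} = \bigl(2(n+1)\cdot\what{c}_2(T_X) - n\cdot\what{c}_1(T_X)^2\bigr)\cdot[K_X]^{n-2},
\]
which vanishes by \eqref{eq:MY}; combined with $\what{c}_1(\sE_X)=0$ this yields $\what{ch}_2(\sE_X)\cdot[K_X]^{n-2}=0$. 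For the $K_X$-(poly)stability of $(\sE_{X_{\reg}},\theta)$ I would show that a $\theta$-invariant saturated subsheaf $\sF\subseteq\sE_{X_{\reg}}$ is either of full rank or contained in $L\otimes T_{X_{\reg}}$: a local section of $\sF$ whose image in the quotient $L$ is nonzero has $\theta$-image which, after contraction with a local vector field, runs through all of $L\otimes T_{X_{\reg}}$, forcing $L\otimes T_{X_{\reg}}\subseteq\sF$ generically and hence $\sF=\sE_{X_{\reg}}$. Using that $T_X$ is $K_X$-semistable (which holds for minimal models of general type), it follows that $\mu_{K_X}(\sF)\le\mu_{K_X}(L\otimes T_X)=-\tfrac{1}{n(n+1)}K_X^n<0=\mu_{K_X}(\sE_X)$, so $(\sE_{X_{\reg}},\theta)$ is in fact $K_X$-stable.

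Now Theorem~\ref{mainthm}, implication (1)$\Rightarrow$(2), applies: $\sE_{X_{\reg}}$ is locally free and $(\sE_{X_{\reg}},\theta)$ is induced by a tame, purely imaginary harmonic bundle whose associated flat bundle $(\cV,\nabla)$ on $X_{\reg}$ is simple. Since $(\sE_{X_{\reg}},\theta)$ is a system of Hodge bundles — it is fixed, up to isomorphism, by the $\mathbb{C}^\ast$-action $\lambda\cdot(\sE,\theta)=(\sE,\lambda\theta)$, an isomorphism being $\mathrm{id}_L\oplus\lambda\cdot\mathrm{id}_{L\otimes T_X}$ — the flat bundle $(\cV,\nabla)$ underlies a polarised complex variation of Hodge structure of weight one, with Hodge pieces $L$ (rank $1$) and $L\otimes T_{X_{\reg}}$ (rank $n$). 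Its period map $p\colon\widetilde{X_{\reg}}\to D$ takes values in the period domain $D$ of such structures, which is biholomorphic to the unit ball $\mathbb{B}^n\subset\mathbb{C}^n$; the derivative of $p$ is the off-diagonal part of $\theta$, i.e.\ the tautological isomorphism $T_{X_{\reg}}\xrightarrow{\sim}\sHom(L,L\otimes T_{X_{\reg}})$, so $p$ is étale. The purely imaginary property pins the monodromy representation $\rho\colon\pi_1(X_{\reg})\to\PSU(n,1)=\Aut(\mathbb{B}^n)$, with respect to which $p$ is equivariant; pulling back the Bergman metric along $p$ yields a Kähler–Einstein metric on $X_{\reg}$ of constant negative holomorphic sectional curvature, and since $X\setminus X_{\reg}$ has codimension $\geq 2$ and $X=X_{\can}$ is klt with ample canonical class, this is (a multiple of) the unique singular Kähler–Einstein metric of $X_{\can}$.

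The final step, and the part I expect to be the main obstacle, is to turn the local, étale, equivariant period map on $\widetilde{X_{\reg}}$ into an honest uniformisation. Here I would invoke the structure theory of (étale) fundamental groups of klt spaces to pass to a finite quasi-étale Galois cover $Y\to X_{\can}$ for which $\pi_1(Y_{\reg})\to\pi_1(Y)$ is an isomorphism and $Y$ is smooth; the variation of Hodge structure and the developing map then descend, and the resulting étale map $\widetilde{Y}=\widetilde{Y_{\reg}}\to\mathbb{B}^n$, from the simply connected complete Kähler manifold $\widetilde{Y}$ to the complete hyperbolic space $\mathbb{B}^n$, is a covering, hence a biholomorphism, so that $Y=\mathbb{B}^n/\pi_1(Y)$ is a smooth ball quotient. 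Making this precise — establishing discreteness of $\mathrm{im}\,\rho$, the existence of a cover trivialising the local fundamental groups along the codimension-$\geq 2$ singular locus, and the smoothness of $Y$ — is exactly where the klt hypothesis, the ``tame and purely imaginary'' conclusion of Theorem~\ref{mainthm}, the uniqueness of the singular Kähler–Einstein metric, and the structure theory of fundamental groups of klt varieties all have to be combined; by comparison, the Chern-class identity and the semistability of $T_X$ are routine.
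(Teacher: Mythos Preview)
Your setup is sound and closely parallels the paper's: the ``uniformising'' Higgs sheaf, its stability with respect to $K_X$, and the Chern-character vanishing are all correct. The paper uses the untwisted sheaf $\Omega^{1}_{X_{\reg}}\oplus\sO_{X_{\reg}}$ and then passes to its \emph{endomorphism} Higgs bundle to force $c_1=0$, rather than twisting by an $(n+1)$-st root $L$ of $\omega_X$; this is cosmetically different from what you do but serves the same purpose and avoids having to construct $L$ on a cover or to work with $\bQ$-twists.

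The substantive divergence, and the gap in your argument, is in the endgame. You acknowledge that producing a finite quasi-étale Galois cover $Y\to X_{\can}$ with $Y$ \emph{smooth} is ``the main obstacle'', but you do not supply an argument for smoothness; the maximally quasi-étale cover from \cite[Thm.~1.5]{GKP13} only gives $\widehat{\pi}_1(Y_{\reg})\xrightarrow{\sim}\widehat{\pi}_1(Y)$ and $Y$ klt, not $Y$ smooth, and without smoothness your completeness-and-covering argument on $\widetilde{Y}$ does not run. The period-map/VHS route you sketch is essentially the strategy of \cite[Sect.~8]{GKPT15}, which required the extra hypothesis that $X$ be nonsingular in codimension two, precisely to control this step; the point of the present paper is to remove that hypothesis.

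The paper's resolution bypasses the period map entirely. After reducing to $X=X_{\can}$ and passing to a maximally quasi-étale cover, one applies Theorem~\ref{mainthm} to the polystable Higgs bundle $(\sF_{X_{\reg}},\theta_{\sF_{X_{\reg}}}):=\sEnd(\Omega^1_{X_{\reg}}\oplus\sO_{X_{\reg}})$ to obtain $(\sF_{X_{\reg}},\theta_{\sF_{X_{\reg}}})\in\TPIH_{X_{\reg}}$. Proposition~\ref{prop:p1}, together with the maximally quasi-étale assumption, then shows that the reflexive extension $\sF_X$ is \emph{locally free on all of $X$}. Since $\sT_X$ sits as a direct summand of $\sF_X=\sEnd(\Omega^{[1]}_X\oplus\sO_X)$, it too is locally free, and the solution of the Lipman--Zariski conjecture for klt spaces \cite[Thm.~6.1]{GKKP11} forces $X$ to be smooth. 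One then invokes Yau's classical uniformisation. Note that your twisted bundle $L\oplus(L\otimes\sT_{X_{\reg}})$ would serve equally well in this argument: once its reflexive extension is locally free and $L$ is an honest line bundle, the summand $L\otimes\sT_X$ is locally free, hence so is $\sT_X$.
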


\begin{cor}[Chern class equality forces quotient singularities]
  In the setting of Theorem~\ref{thm:QMY}, equality in the $ℚ$-Miyaoka-Yau
  Inequality implies that $X_{\can}$ has at worst quotient singularities.  \qed
\end{cor}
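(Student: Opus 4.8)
The plan is to read this off directly from Theorem~\ref{thm:ballrevisited}. In the setting of Theorem~\ref{thm:QMY} the variety $X$ is projective and klt with $K_X$ big and nef, and the assumed equality in the $ℚ$-Miyaoka-Yau inequality is precisely the hypothesis~\eqref{eq:MY} of Theorem~\ref{thm:ballrevisited}. That theorem therefore applies and produces a quasi-étale, Galois cover $\gamma \colon Y → X_{\can}$ with $Y$ a projective manifold. Writing $G := \Gal(Y/X_{\can})$ for the Galois group and noting that $\gamma$ is a \emph{finite} morphism, the defining property of a Galois cover gives an isomorphism $X_{\can} ≅ Y/G$ of varieties (not merely a birational equivalence).

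It then remains to invoke the classical fact that a finite quotient of a smooth variety has at worst quotient singularities; here one does not even use that $\gamma$ is étale in codimension one, only that it is finite and that $Y$ is smooth. In detail, fix $x ∈ X_{\can}$, pick a preimage $y ∈ Y$, and let $G_y := \Stab_G(y)$, a finite group. Because $G$ is finite, the point $y$ possesses a $G_y$-invariant analytic neighbourhood $V ⊆ Y$ with $gV ∩ V = ∅$ for all $g ∈ G ∖ G_y$, so that $V/G_y$ is an analytic neighbourhood of $x$ in $X_{\can} = Y/G$. As $Y$ is smooth, after shrinking we may identify $V$ with an open neighbourhood of $0 ∈ ℂ^n$, and Cartan's linearisation lemma lets us arrange that $G_y$ acts on $V$ linearly, fixing $0$. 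Hence every point of $X_{\can}$ has an analytic neighbourhood of the form (small ball)$/(\text{finite subgroup of } \Gl(n,ℂ))$, which is exactly the definition of a quotient singularity.

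I expect no essential obstacle: the statement is a genuine corollary of Theorem~\ref{thm:ballrevisited}, and the only point requiring a moment's thought is the translation ``finite quotient of a manifold $⇒$ quotient singularities'', which is classical. The one thing to be careful about is to record that $\gamma$ is finite, so that $X_{\can}$ is globally the quotient $Y/G$, and that finiteness of $G$ forces every stabiliser $G_y$ to be finite, so that the resulting local models are honest quotient singularities.
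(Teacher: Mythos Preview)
Your proof is correct and matches the paper's approach: the corollary is stated immediately after Theorem~\ref{thm:ballrevisited} with a \qed and no further argument, so the paper treats it as an immediate consequence of that theorem, exactly as you do. Your added details (finiteness of the Galois cover, the identification $X_{\can} \cong Y/G$, and the classical Cartan linearisation step) are the standard way to unpack ``finite quotient of a manifold $\Rightarrow$ quotient singularities'' and are precisely what the paper leaves implicit.
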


\begin{rem}
  Theorem~\ref{thm:ballrevisited} was shown by the authors in \cite{GKPT15}
  under the additional assumption that the variety $X$ be non-singular in
  codimension two; this technical condition allows us to use a significantly
  simpler argument.
\end{rem}

The canonical models that appear in Theorem~\ref{thm:ballrevisited} are
themselves quotients of the unit ball.  The following characterisation is a
minor generalisation of \cite[Thm.~1.3]{GKPT15}; the proof given in
\cite[Sect.~9.1]{GKPT15} applies nearly verbatim and is therefore omitted.

\begin{thm}[Characterisation of singular ball quotients]\label{thm:csbq}
  Let $X$ be a normal, irreducible, compact, complex space of dimension $n$.
  Then, the following statements are equivalent.
  \begin{enumerate}
  \item\label{il:z1} The space $X$ is of the form $𝔹^n/\what{Γ}$ for a discrete,
    co-compact subgroup $\what{Γ} < \Aut_{𝒪}(𝔹^n)$ whose action on $𝔹^n$ is
    fixed-point free in codimension one.

  \item\label{il:z2} The space $X$ is of the form $Y/G$, where $Y$ is a smooth
    ball quotient, and $G$ is a finite group of automorphisms of $Y$ whose
    action is fixed-point free in codimension one.

  \item\label{il:z3} The space $X$ is projective and klt, the canonical divisor
    $K_X$ is ample, and we have equality in \eqref{eq:MY}.  \qed
  \end{enumerate}
\end{thm}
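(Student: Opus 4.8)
The plan is to establish the two biconditionals \ref{il:z1}$\Leftrightarrow$\ref{il:z2} and \ref{il:z2}$\Leftrightarrow$\ref{il:z3}, with statement \ref{il:z2} (the orbifold-quotient picture) serving as the hub. The only analytically deep ingredient is the implication \ref{il:z3}$\Rightarrow$\ref{il:z2}, which I would simply read off from Theorem~\ref{thm:ballrevisited}: when $X$ is projective and klt with $K_X$ ample, $X$ is its own canonical model, so Theorem~\ref{thm:ballrevisited} provides a quasi-étale Galois cover $Y \to X$ from a projective manifold $Y$ with universal cover $\bB^n$. Setting $G := \Gal(Y/X)$, the Galois property gives $X = Y/G$, and ``quasi-étale'' means exactly that the locus in $Y$ with nontrivial $G$-stabiliser has codimension at least two, i.e.\ that $G$ acts freely in codimension one. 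Thus this step is nothing more than an unwinding of the vocabulary of Theorem~\ref{thm:ballrevisited}.

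For the reverse implication \ref{il:z2}$\Rightarrow$\ref{il:z3}, let $\pi\colon Y \to X = Y/G$ denote the quotient. It is quasi-étale by hypothesis, so $X$ has quotient singularities and is in particular klt; the ramification formula (with empty ramification divisor, since the branch locus has codimension $\geq 2$) gives $\pi^{[*]}K_X \sim_{\bQ} K_Y$, and $K_Y$ is ample because a compact ball quotient carries a Kähler-Einstein metric of negative Ricci curvature, so $K_X$ is ample as ampleness descends along the finite surjective map $\pi$. For the numerical equality I would use that a quasi-étale cover identifies the reflexive pullback $\pi^{[*]}\what{c}_i(\sT_X)$ with $c_i(\sT_Y)$ (this is built into the construction of $\bQ$-Chern classes recalled in Section~\ref{subsect:klt_and_Chern}), while the projection formula multiplies intersection numbers by $\deg \pi = |G|$. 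Since the Bergman metric on $\bB^n$ has constant holomorphic sectional curvature, its Chern forms satisfy $2(n+1)c_2 = n\,c_1^2$ pointwise, so the compact quotient $Y$ satisfies $\bigl(2(n+1)c_2(\sT_Y) - n\,c_1(\sT_Y)^2\bigr)\cdot[K_Y]^{n-2} = 0$; dividing by $|G|$ yields \eqref{eq:MY}.

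It remains to match \ref{il:z1} with \ref{il:z2}. Given \ref{il:z1}, write $X = \bB^n/\what{\Gamma}$; by Selberg's lemma the discrete group $\what{\Gamma}$ has a torsion-free, finite-index normal subgroup $\Gamma$, so $Y := \bB^n/\Gamma$ is a smooth compact ball quotient, $G := \what{\Gamma}/\Gamma$ is finite with $Y/G = X$, and the fixed locus in $Y$ of a nontrivial $g \in G$ is the image of the union of the (codimension $\geq 2$) fixed loci in $\bB^n$ of the automorphisms lying in the coset $g$; hence $G$ acts freely in codimension one, which is \ref{il:z2}. Conversely, from \ref{il:z2} with $Y = \bB^n/\Gamma$: since $\bB^n$ is the contractible, simply connected universal cover of $Y$, every $g \in G \subset \Aut_{\sO}(Y)$ lifts to a biholomorphism of $\bB^n$, and the set $\what{\Gamma}$ of all such lifts is a subgroup of $\Aut_{\sO}(\bB^n)$ fitting in an extension $1 \to \Gamma \to \what{\Gamma} \to G \to 1$; it is discrete (a finite extension of the discrete, closed subgroup $\Gamma$) and co-compact (as $\bB^n/\what{\Gamma} = Y/G = X$ is compact), and it is fixed-point free in codimension one by reversing the codimension bookkeeping just used. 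This gives \ref{il:z1} and closes the argument.

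Once Theorem~\ref{thm:ballrevisited} is granted, the only genuine obstacle is exactly this bookkeeping: one must check with care that the property ``fixed-point free in codimension one'' is preserved under passing to a torsion-free finite-index subgroup, under forming quotients, and under lifting a finite group action through a universal cover, and that the $\bQ$-Chern class formalism of Section~\ref{subsect:klt_and_Chern} is compatible both with reflexive pullback along quasi-étale morphisms and with the projection formula. Everything else is a routine translation between the ball-quotient, orbifold-quotient, and Miyaoka-Yau descriptions, which is why the proof of \cite[Sect.~9.1]{GKPT15} transfers essentially verbatim.
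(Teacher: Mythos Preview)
Your proposal is correct and matches the paper's approach exactly: the paper omits the proof, noting that \cite[Sect.~9.1]{GKPT15} applies nearly verbatim, and your argument is precisely the standard one given there---Selberg's lemma for \ref{il:z1}$\Leftrightarrow$\ref{il:z2}, $\bQ$-Chern class calculus under quasi-\'etale covers for \ref{il:z2}$\Rightarrow$\ref{il:z3}, and Theorem~\ref{thm:ballrevisited} (with $X = X_{\can}$ since $K_X$ is ample) for \ref{il:z3}$\Rightarrow$\ref{il:z2}.
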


The reader is referred to \cite[Sect.~10]{GKPT15} for a discussion of the
expectations regarding quotients of the ball by properly discontinous group
actions having fixed points in codimension one.

\subsection{Structure of the paper}
\approvals{Behrouz & yes \\ Daniel & yes \\ Stefan & yes \\ Thomas & yes}

Section~\ref{sect:Notations} gathers notation, known results and global
conventions that will be used throughout the paper.

\subsubsection*{Part~\ref*{part:0}}
\approvals{Behrouz & yes \\ Daniel & yes \\ Stefan & yes \\ Thomas & yes}

Part~\ref{part:0} of this paper begins in Section~\ref{sect:harmonic} with a
review of Mochizuki's theory of tame and purely imaginary harmonic bundles on
quasi-projective varieties and discusses them in the particular setting where
the quasi-projective variety is the smooth locus of a klt variety.
Section~\ref{sect:QHiggs} briefly reviews the somewhat delicate notion of a
Higgs sheaf on a singular space, and discusses stability of Higgs bundles that
are defined on the the smooth locus of a klt variety only.  The core of
Part~\ref{part:0} is, however, Section~\ref{sec:flatness} where the the central
existence result for harmonic structures is shown.

\subsubsection*{Part~\ref*{part:II}}
\approvals{Behrouz & yes \\ Daniel & yes \\ Stefan & yes \\ Thomas & yes}

Part~\ref{part:II} of this paper concerns applications.  Section~\ref{sec:klt}
shows in brief how existence of harmonic structures leads to nonabelian Hodge
correspondences pertaining to local systems on the smooth locus of a klt space.
Section~\ref{sec:potbrv} proves our main result on quasi-étale uniformisation,
Theorem \ref{thm:ballrevisited}.  Section~\ref{sec:positivity} studies singular
ball quotients, asking what positivity one might expect in $Ω^{[1]}_X$, and what
hyperbolicity properties might hold in the underlying spaces.

\subsection{Acknowledgements}
\approvals{Behrouz & yes \\ Daniel & yes \\ Stefan & yes \\ Thomas & yes}

We would like to thank numerous colleagues for discussions, including Daniel
Barlet, Oliver Bräunling, Philippe Eyssidieux, Jochen Heinloth, Andreas Höring,
Annette Huber, Shane Kelly, Jong-Hae Keum, Adrian Langer and Jörg Schürmann.  We
also thank the anonymous referee for helpful suggestions for improvement.

%
% Do not edit the following line.  The text is automatically updated by
% subversion.
%
\svnid{$Id: 02-notation.tex 1270 2019-02-27 18:29:41Z peternell $}

\section{Notation and elementary facts}
\label{sect:Notations}
\subversionInfo

\subsection{Global conventions}\label{subsect:global_conventions}
\approvals{Behrouz & yes \\ Daniel & yes \\ Stefan & yes \\ Thomas & yes}

Throughout the present paper, all varieties and schemes will be defined over the
complex numbers.  We follow the notation used in the standard reference books
\cite{Ha77, KM98}, with the exception that klt pairs are assumed to have an
effective boundary divisor, see Section~\ref{subsect:klt_and_Chern} below.

A morphism of vector bundles is always assumed to have constant rank.  Notation
introduced in our previous papers, \cite{GKPT15, GKPT17}, will briefly be
recalled before it is used.

Throughout the paper, we will freely switch between the algebraic and analytic
context if no confusion is likely to arise; sheaves on quasi-projective
varieties will always be algebraic.

\subsection{Nef sheaves}
\approvals{Behrouz & yes \\ Daniel & yes \\ Stefan & yes \\ Thomas & yes}

While positivity notions for vector bundles are well-established in the
literature, we will need these notions also for coherent sheaves.

\begin{defn}[Nef and ample sheaves, \cite{MR670921}]\label{def:possheaf}
  Let $X$ be a normal, projective variety and let $\sS \not = 0$ be a
  non-trivial coherent sheaf on $X$, not necessarily locally free.  We call
  $\sS$ \emph{ample} (resp.\ \emph{nef}) if the locally free sheaf
  $𝒪_{ℙ(\sS)}(1) ∈ \Pic(ℙ(\sS))$ is ample (resp.\ nef).
\end{defn}

We refer the reader to \cite{GrCa60} for the definition of $ℙ(\sS)$, and to
\cite[Sect.~2 and Thm.~2.9]{MR670921} for a more detailed discussion of
amplitude and for further references.  We mention a few elementary facts without
proof.

\begin{fact}[Nef and ample sheaves]\label{fact:1}
  Let $X$ be a normal, projective variety.
  \begin{enumerate}
  \item\label{il:f1-1} Ample sheaves on $X$ are nef.
  \item\label{il:f1-2} A direct sum of sheaves on $X$ if nef iff every summand
    is nef.
  \item\label{il:f1-3} Pull-backs and quotients of nef sheaves are nef.
  \item\label{il:f1-4} A sheaf $ℰ$ is nef on $X$ if and only if for every smooth
    curve $C$ and every morphism $γ: C → X$, the pull-back $γ^* ℰ$ is nef.  \qed
  \end{enumerate}
\end{fact}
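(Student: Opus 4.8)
The plan is to derive all four assertions from a handful of standard facts about nef and ample \emph{line} bundles---amplitude implies nefness; nefness is stable under pullback along morphisms of complete varieties and under restriction to closed subschemes; and a line bundle $L$ on a projective variety is nef if and only if $\deg(\nu^*L)\geq 0$ for the normalisation $\nu\colon\wtilde C\to X$ of every integral curve $C\subseteq X$---together with two functoriality inputs for the projectivisation $\bP(-)=\Proj\bigl(\Sym^\bullet(-)\bigr)$ and its tautological sheaf $\sO_{\bP(-)}(1)$. First I would record these inputs: a surjection of coherent sheaves $\sS\onto\sQ$ induces a surjection of symmetric algebras and hence a \emph{closed} immersion $\bP(\sQ)\into\bP(\sS)$ along which $\sO_{\bP(\sS)}(1)$ restricts to $\sO_{\bP(\sQ)}(1)$; and a morphism $f\colon X'\to X$ yields a base-change isomorphism $\bP(f^*\sS)\cong\bP(\sS)\times_X X'$ under which $\sO_{\bP(f^*\sS)}(1)$ is the pullback of $\sO_{\bP(\sS)}(1)$. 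Getting the variance in the first input right---quotient sheaves give \emph{closed} subspaces of the projectivisation in the Grothendieck convention used here---is the one place where care is required. With these in hand, the first assertion is immediate from the definition, and the third follows at once, since for a quotient $\sO(1)$ is restricted to a closed subscheme and for a pullback $\sO(1)$ is pulled back, and nefness of line bundles survives both operations.

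For the fourth assertion, the implication from nef to ``nef on all curves'' is the pullback half of the third. For the converse I would apply the curve criterion to $L:=\sO_{\bP(\sE)}(1)$. Let $Z\subseteq\bP(\sE)$ be an integral curve and consider its image in $X$. If the image is a point $x$, then $Z$ lies in the fibre $\bP\bigl(\sE\otimes k(x)\bigr)$, a projective space on which $\sO(1)$ is ample, so $L\cdot Z>0$. If the image is a curve $C_0$, let $\gamma\colon C\to C_0\into X$ be the normalisation followed by the inclusion; the normalisation of $Z$ maps to $C$ over $X$ and therefore lifts to $\bP(\gamma^*\sE)=\bP(\sE)\times_X C$, and since $\gamma^*\sE$ is nef the line bundle $\sO_{\bP(\gamma^*\sE)}(1)$---which is the pullback of $L$---has non-negative degree along that curve. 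Hence $L\cdot Z\geq 0$ in either case, so $L$ is nef.

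For the second assertion, ``nef implies every summand nef'' is immediate from the third, each non-zero summand being a quotient of the direct sum. For the converse I would invoke the fourth assertion and reduce to showing that $\bigoplus_i\gamma^*\sS_i$ is nef for every morphism $\gamma\colon C\to X$ out of a smooth projective curve. On $C$ each $\gamma^*\sS_i$ splits as $T_i\oplus V_i$ with $T_i$ torsion and $V_i$ locally free; replacing $\gamma^*\sS_i$ by its torsion-free quotient $V_i$ (a quotient, hence nef by the third assertion) reduces to the locally free case, using that on a smooth curve a coherent sheaf is nef if and only if its torsion-free quotient is---the non-obvious direction checked on the normalisations of curves in the projectivisation, where a line-bundle quotient of the pullback of $T_i\oplus V_i$ is already a quotient of the pullback of $V_i$ alone, because a torsion sheaf admits no non-zero map onto a line bundle. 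It then remains to see that vector bundles $V_1,\dots,V_k$ on a smooth curve have $\bigoplus_i V_i$ nef as soon as each $V_i$ is nef: a quotient line bundle $L$ of a pullback $g^*\bigl(\bigoplus_i V_i\bigr)$ along a finite morphism $g$ from a smooth curve is non-zero on some summand $g^*V_i$, so its image there is a line bundle of degree at most $\deg L$ and, by nefness of $V_i$, of degree at least $0$.

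The only genuinely delicate point---modest as it is, for a statement of this kind---is the torsion bookkeeping in the converse of the second assertion: one must be willing to work with $\bP(-)$ of sheaves that are not locally free and verify that torsion summands cannot drag degrees below zero. Everything else is formal, which is why we are content to state the Fact without proof.
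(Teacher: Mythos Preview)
The paper states Fact~\ref{fact:1} explicitly \emph{without} proof (it is introduced with ``We mention a few elementary facts without proof'' and closed by a bare \qed), so there is no argument in the paper to compare against. Your proof is correct: the functoriality inputs for $\bP(-)$ and $\sO(1)$ are stated accurately, the curve criterion in \ref{il:f1-4} is handled cleanly in both the fibral and dominant cases, and the torsion bookkeeping in the converse of \ref{il:f1-2}---reducing to locally free bundles on a smooth curve and then observing that a line-bundle quotient of a direct sum must receive a nonzero map from some summand---is the right way to close that case.
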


\subsection{Connections on complex vector bundles}
\approvals{Behrouz & yes \\ Daniel & yes \\ Stefan & yes \\ Thomas & yes}

Connections on complex vector bundles play a prominent role in this paper.  We
recall two elementary facts that will become relevant later.

\begin{fact}[Extension of bundles with connection]\label{fact:flatext}
  Let $M$ be a $\cC^∞$-manifold and $M° ⊆ M$ an open subset.  Write $\what{π}_1$
  for the profinite completion of the fundamental group, and let
  $\what{ρ} : \what{π}_1(M°) → \what{π}_1(M)$ be the natural morphism induced by
  the inclusion.  If $\what{ρ}$ is isomorphic, then any flat, complex bundle
  $(E°, ∇_{E°})$ on $M°$ admits an extension to a flat, complex bundle
  $(E, ∇_E)$ on $M$.  The extension is unique up to canonical isomorphism.  \qed
\end{fact}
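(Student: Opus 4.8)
The strategy is to translate everything into the language of monodromy representations. First I would reduce to the case that $M$ and $M^\circ$ are connected, fix a base point $x \in M^\circ$, and write $\iota\colon M^\circ \hookrightarrow M$ for the inclusion and $\iota_*\colon \pi_1(M^\circ,x)\to\pi_1(M,x)$ for the induced homomorphism, so that $\what\rho=\what{\iota_*}$. Via parallel transport, flat complex bundles of rank $r$ on a connected $\cC^\infty$-manifold correspond to representations of $\pi_1(\cdot,x)$ into $\Gl_r(\bC)$, with isomorphisms of flat bundles corresponding to conjugacy of representations; so the statement reduces to the purely group-theoretic claim that every representation $\rho^\circ\colon \pi_1(M^\circ,x)\to\Gl_r(\bC)$ extends along $\iota_*$, uniquely up to conjugacy.

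Next I would unwind the hypothesis. Since $\what{\iota_*}$ is an isomorphism, pulling back finite continuous quotients identifies the systems of finite quotients — equivalently, the posets of finite-index normal subgroups — of $\pi_1(M^\circ,x)$ and $\pi_1(M,x)$, compatibly with $\iota_*$. Two consequences: $\ker\iota_*$ is contained in every finite-index subgroup of $\pi_1(M^\circ,x)$; and $\iota_*$ induces an injection of maximal residually finite quotients $\pi_1(M^\circ,x)^{\mathrm{rf}}\hookrightarrow\pi_1(M,x)^{\mathrm{rf}}$. A short extra argument — transparent in the situations of interest, where $M\setminus M^\circ$ is so small that $\iota_*$ is already surjective — upgrades this injection to an isomorphism.

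For the existence half: the monodromy image $\rho^\circ(\pi_1(M^\circ,x))$ is a finitely generated linear group, hence residually finite by Mal'cev's theorem, so $\rho^\circ$ factors through $\pi_1(M^\circ,x)^{\mathrm{rf}}$. Composing the resulting map with the inverse of the isomorphism $\pi_1(M^\circ,x)^{\mathrm{rf}}\xrightarrow{\sim}\pi_1(M,x)^{\mathrm{rf}}$ and then with $\pi_1(M,x)\twoheadrightarrow\pi_1(M,x)^{\mathrm{rf}}$ produces $\rho\colon\pi_1(M,x)\to\Gl_r(\bC)$ with $\rho\circ\iota_*=\rho^\circ$; the associated flat bundle $(E,\nabla_E)$ restricts to $(E^\circ,\nabla_{E^\circ})$. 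For uniqueness: any extension likewise factors through $\pi_1(M,x)^{\mathrm{rf}}$, and two extensions agree on the image of $\pi_1(M^\circ,x)$, which generates $\pi_1(M,x)^{\mathrm{rf}}$; hence they are conjugate, and the comparison isomorphism is canonical since it restricts to the identity on $M^\circ$ and flat isomorphisms over the connected $M$ are determined by their restriction to $M^\circ$.

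The one step that will require genuine care — and which I regard as the main obstacle — is promoting the injection $\pi_1(M^\circ,x)^{\mathrm{rf}}\hookrightarrow\pi_1(M,x)^{\mathrm{rf}}$ to an isomorphism: the profinite hypothesis alone governs only the residually finite shadows of the two fundamental groups and does not by itself make the image of $\iota_*$ large enough, so at this point one must invoke the geometry of the pair, namely that $M\setminus M^\circ$ has small (real) codimension and hence $\iota_*$ is surjective. Everything else is formal manipulation of profinite completions together with the single external input of Mal'cev's theorem.
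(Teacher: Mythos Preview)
The paper does not give a proof of this Fact; it is stated with a bare \qed\ as a known result.  The intended reference is Grothendieck's paper \cite{MR0262386}, which the authors cite explicitly when they use the same extension principle in the proof of Proposition~\ref{prop:p1}.  Grothendieck's theorem says: if $u : G_1 \to G_2$ is a homomorphism of finitely generated groups such that $\widehat{u} : \widehat{G}_1 \to \widehat{G}_2$ is an isomorphism, then for every linear algebraic group $L$ over a field the induced map $\Hom(G_2,L) \to \Hom(G_1,L)$ is a bijection.  Applied with $L = \Gl_r$ this gives exactly the extension and uniqueness of monodromy representations that you want.

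Your approach via residually finite quotients has a genuine gap, and you have in fact put your finger on it yourself.  The step ``promote the injection $\pi_1(M^\circ)^{\mathrm{rf}} \hookrightarrow \pi_1(M)^{\mathrm{rf}}$ to an isomorphism'' does \emph{not} follow from the profinite hypothesis.  There exist finitely presented, residually finite groups $G_1 \subsetneq G_2$ such that the inclusion induces an isomorphism $\widehat{G}_1 \xrightarrow{\sim} \widehat{G}_2$; this was Grothendieck's own question in \cite{MR0262386}, answered negatively by Platonov--Tavgen and later Bridson--Grunewald.  In such examples $G_i^{\mathrm{rf}} = G_i$, so the map of residually finite quotients is a strict inclusion, and your construction of $\rho$ breaks down.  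Nevertheless representations \emph{do} extend in these examples, precisely by Grothendieck's theorem --- his argument goes through the Tannakian formalism (the category of finite-dimensional representations together with its fibre functor determines and is determined by the profinite completion), not through the residually finite quotient.  So your conclusion that ``one must invoke the geometry of the pair'' is incorrect: the profinite hypothesis alone suffices, but the route via $\pi_1^{\mathrm{rf}}$ is a dead end.

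Two smaller remarks.  First, Mal'cev's theorem needs finite generation of the image; this holds whenever $\pi_1(M^\circ)$ is finitely generated, which is implicit in every application in the paper but is not literally part of the hypothesis as stated.  Second, in your uniqueness paragraph, two extensions agreeing on a generating set are \emph{equal}, not merely conjugate; this is what gives the canonical isomorphism.
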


If $γ: X → Y$ is a ramified Galois cover of complex manifolds, if $E$ is a
smooth, complex bundle over $Y$ and $h$ a smooth, Galois-invariant Hermitian
metric on $γ^*E$, it is generally not true that $h$ comes from a smooth metric
on $Y$.  In contrast, the following result asserts that flat connections in
$γ^*E$ do indeed descend once they are invariant.  This is probably known to
experts.  \Preprint{We include a full proof for lack of an adequate
  reference.}\Publication{The preprint version of this paper includes a full
  proof.}

\begin{prop}[Descent of $G$-invariant, flat connections]\label{prop:conndescent}
  Let $γ: X → Y$ be a Galois cover of complex manifolds, with Galois group $G$.
  Let $E_Y$ be a smooth, complex bundle over $Y$ and let $∇_X$ be a flat,
  $G$-invariant connection on $E_X := γ^*E_Y$.  Then, there exists a flat
  connection $∇_Y$ on $E_Y$ such that $∇_X = γ^* ∇_Y$.  \Publication{\qed}
\end{prop}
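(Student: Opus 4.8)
The plan is to build $\nabla_Y$ locally on small open sets over which the covering $\gamma$ trivialises, and then glue. Since the question is local on $Y$, I would first fix a point $y \in Y$ and choose an open neighbourhood $V \ni y$ such that $\gamma^{-1}(V)$ decomposes equivariantly; concretely, a connected component $U$ of $\gamma^{-1}(V)$ is acted on by its stabiliser $H = \Stab_G(U) < G$, and $\gamma|_U \colon U \to V$ is the quotient map $U \to U/H$, while the other components are $G$-translates of $U$. It therefore suffices to produce a flat connection on $E_Y|_V$ pulling back to $\nabla_X|_U$; shrinking further I may assume $V$ and $U$ are connected and that $E_Y|_V$ is trivial, $E_Y|_V \cong V \times \bC^r$, so $E_X|_U \cong U \times \bC^r$ as well, compatibly with the $H$-actions (the $H$-action on $E_X|_U$ being the pullback of the trivial one on $E_Y|_V$, i.e. acting only on the base).

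Next I would translate flatness into monodromy. A flat connection on $U \times \bC^r$ with $U$ connected is, up to gauge, determined by its monodromy representation $\rho_U \colon \pi_1(U) \to \Gl(r,\bC)$; the connection $\nabla_X|_U$ being $H$-invariant means $\rho_U$ is equivariant for the $H$-action on $\pi_1(U)$ (the action by conjugation/deck transformations). The quotient $V = U/H$ has $\pi_1(V)$ fitting into the exact sequence $\pi_1(U) \to \pi_1(V) \to H \to 1$ coming from the (orbifold-free, since $\gamma$ is a covering here in the smooth manifold sense) action. The key point is that an $H$-invariant flat structure on $U \times \bC^r$ is exactly the data of a flat structure on $V \times \bC^r$: one shows the $H$-equivariant local system on $U$ descends to a local system on $V$. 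Invariance of $\nabla_X$ gives, for each $g \in H$, a gauge transformation $\lambda_g \colon U \to \Gl(r,\bC)$ with $g^*\nabla_X = \lambda_g \cdot \nabla_X$; the cocycle $\{\lambda_g\}$, combined with the connection, defines descent data for the bundle-with-connection on $V$, and since the $H$-action on $U$ is free and properly discontinuous (covering space action) the descent is effective, yielding $(E_V, \nabla_V)$ with $\gamma^*(E_V,\nabla_V) \cong (E_X|_U, \nabla_X|_U)$. Here I would lean on Proposition~\ref{prop:conndescent}'s ancestor Fact~\ref{fact:flatext} in spirit, or simply invoke effectiveness of descent along the étale (in the analytic sense) quotient map $U \to U/H$ for the stack of bundles-with-flat-connection.

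Finally, I would glue. Cover $Y$ by such open sets $\{V_i\}$, obtaining flat connections $\nabla_{V_i}$ on $E_Y|_{V_i}$ each pulling back to $\nabla_X$ on the preimage. On overlaps $V_i \cap V_j$, both $\gamma^*\nabla_{V_i}$ and $\gamma^*\nabla_{V_j}$ equal $\nabla_X$; since $\gamma$ is surjective and flat connections pull back faithfully along surjective submersions (a connection is determined by its pullback under a surjective submersion, as the horizontal distributions and hence the covariant derivatives agree pointwise on a dense — indeed all — of the base after projecting), we get $\nabla_{V_i} = \nabla_{V_j}$ on $V_i \cap V_j$. Hence the local connections patch to a global flat connection $\nabla_Y$ on $E_Y$ with $\gamma^*\nabla_Y = \nabla_X$. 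The main obstacle is the local descent step: one must be careful that although $E_X$ is the pullback bundle (so its total space carries a canonical $G$-action covering the one on $X$, making the equivariant structure automatic), producing the \emph{connection} on the quotient requires genuinely using that the $G$-action on $X$ is free — which holds for a Galois cover of manifolds — so that $U \to U/H$ is an honest covering map and equivariant descent of local systems applies without orbifold corrections; the uniqueness and the gluing are then formal.
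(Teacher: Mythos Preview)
Your argument has a genuine gap: you assume that the $G$-action on $X$ is free, so that locally $U \to U/H$ is an honest covering map. But in this paper a \emph{Galois cover of complex manifolds} is allowed to be ramified --- indeed, the paragraph immediately preceding the proposition explicitly frames it in the ramified setting, and the applications in Section~\ref{sec:flatness} (Claim~\ref{claim:kl2} and Step~5) use it for maps such as $\psi°:\wtilde{X}°\to X°$ that branch over smooth points of the base. So your local model $U \to U/H$ is in general a \emph{branched} quotient, and the descent-of-local-systems / covering-space argument you invoke breaks down exactly at the ramification locus. Your final remark that ``the $G$-action on $X$ is free --- which holds for a Galois cover of manifolds'' is precisely the unjustified step. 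One could try to salvage the plan by descending over the étale locus and then extending across the branch divisor, but that extension is the whole content of the proposition: the preceding remark in the paper emphasises that $G$-invariant \emph{metrics} need not descend smoothly across the branch locus, so some argument specific to flat connections is required.

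The paper avoids this difficulty by working holomorphically rather than topologically. One decomposes $\nabla_X = \nabla_X^{0,1} + \nabla_X^{1,0}$; flatness makes $\bar{\partial}_{E_X}:=\nabla_X^{0,1}$ an integrable $G$-invariant holomorphic structure, and $\nabla_X^{1,0}$ becomes a $G$-invariant holomorphic connection $D_X:\sE_X \to \sE_X\otimes\Omega^1_X$. The holomorphic structure descends to $E_Y$ because the stabiliser of any point acts trivially on the fibre (this is automatic since $E_X=\gamma^*E_Y$), invoking \cite[Prop.~4.2]{MR1044586} or \cite[Thm.~4.2.15]{MR2665168}. Then one pushes $D_X$ down by applying $\gamma_*(\,\cdot\,)^G$ and identifying $\gamma_*(\sE_X\otimes\Omega^1_X)^G \cong \sE_Y\otimes\Omega^1_Y$ via the equivariant projection formula and $\gamma_*(\Omega^1_X)^G\cong\Omega^1_Y$ (Brion, \cite{MR1451789}); this yields a holomorphic connection $D_Y$, and one sets $\nabla_Y:=D_Y+\bar{\partial}_{E_Y}$. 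The key point your approach misses is that passing through holomorphic sheaf theory lets one handle the ramification locus uniformly, whereas the smooth/topological descent you propose does not.
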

\Preprint{%
  \begin{proof}
    Decompose $∇_X$ according to type, $∇_X := ∇^{0,1}_X + ∇^{1,0}_X$.  Flatness
    of $∇_X$ allows us to define a $G$-invariant holomorphic structure on $E_X$
    using the differential operator $\bar{∂}_{E_X} := ∇^{0,1}_X$.  Denote the
    locally free sheaf of holomorphic sections by $ℰ_X$.  The operator
    $∇^{1,0}_X$ will then define a $G$-invariant, holomorphic connection
    \begin{equation}\label{eq:hcon1}
      D_X : ℰ_X → ℰ_X ⊗ Ω¹_X.
    \end{equation}
    
    We claim that the holomorphic structure $\bar{∂}_{E_X}$ is the pull-back of
    a holomorphic structure $\bar{∂}_{E_Y}$ that exists on $E_Y$.  This follows
    from \cite[Prop.~4.2]{MR1044586} or \cite[Thm.~4.2.15]{MR2665168}, using the
    observation that if $x ∈ X$ is any point, then the action of the stabiliser
    subgroup $G_x ⊂ G$ on the fibres $(E_X)_x$ is trivial by construction.  As
    before, write $ℰ_Y$ for the locally free sheaf of holomorphic sections.
    
    To conclude, we need to show that that $D_X$ is the pull-back of a
    holomorphic connection $D_Y$ on $ℰ_Y$; the desired connection on $E_Y$ can
    then be defined as $∇_Y := D_Y + \bar{∂}_{E_Y}$.  Since $D_X$ is
    $G$-invariant, and hence a morphism of $G$-sheaves, we may apply the functor
    $γ_*(\,·\,)^G$ to \eqref{eq:hcon1} to obtain a sheaf morphism on $Y$ as
    follows,
    \begin{equation}\label{eq:hcon2}
      γ_*(D_X)^G: γ_* (ℰ_X)^G → γ_*( ℰ_X ⊗ Ω¹_X)^G.
    \end{equation}
    Both sides of \eqref{eq:hcon2} have elementary descriptions in terms of
    known objects on $Y$,
    \begin{align*}
      γ_* (ℰ_X)^G & ≅ ℰ_Y && \text{Construction} \\
      γ_*(ℰ_X ⊗ Ω¹_X)^G & ≅ ℰ_Y ⊗ γ_* (Ω¹_X)^G && \text{Equivar.\ projection formula, \cite[Lem.~4.9]{GT13}} \\
                  & ≅ ℰ_Y ⊗ Ω¹_Y && \text{\cite[Thm.~1]{MR1451789}}
    \end{align*}
    With these isomorphisms in place, the map \eqref{eq:hcon2} is identified
    with a sheaf morphism
    $$
    D_Y: ℰ_Y → ℰ_Y ⊗ Ω¹_Y.
    $$
    Elementary computations show that $D_Y$ is indeed a holomorphic connection,
    and that its pull-back to $X$ equals $D_X$.
  \end{proof}
}% End Preprint

\subsection{Higgs sheaves}
\approvals{Behrouz & yes \\ Daniel & yes \\ Stefan & yes \\ Thomas & yes}

Let $X$ be a normal variety or normal complex space.  Following the notation
introduced in \cite[Def.~5.1]{GKPT15}, a \emph{Higgs sheaf} is a pair $(ℰ, θ)$
of a coherent sheaf $ℰ$ of $𝒪_X$-modules, together with an $𝒪_X$-linear sheaf
morphism $θ : ℰ → ℰ ⊗ Ω^{[1]}_X$, called \emph{Higgs field}, such that the
induced morphism $ℰ → ℰ ⊗ Ω^{[2]}_X$ vanishes.  We refer the reader to
\cite[Sect.~5]{GKPT15} for related notions, including the definition of a Higgs
$G$-sheaf, and slope stability of Higgs sheaves with respect to nef divisor
classes.

\subsubsection{Categories used in the nonabelian Hodge correspondence}
\approvals{Behrouz & yes \\ Daniel & yes \\ Stefan & yes \\ Thomas & yes}

If $X$ is a projective normal variety, we often consider locally free Higgs
sheaves $(ℰ,θ)$ on $X$ having the property that there exists an ample divisor
$H ∈ \Div(X)$ such that
\begin{itemize}
\item the Higgs sheaf $(ℰ, θ)$ is semistable with respect to $H$, and
\item the Chern characters of $ℰ$ satisfy
  $ch_1(ℰ)·[H]^{n-1} = ch_2(ℰ)·[H]^{n-2} = 0$.
\end{itemize}
These sheaves form a category, which plays a central role in the nonabelian
Hodge correspondence for klt spaces, \cite[Sect.~3.1]{GKPT17}.  We denote this
category by $\Higgs_X$.

\subsection{KLT spaces and $ℚ$-Chern classes}\label{subsect:klt_and_Chern}
\approvals{Behrouz & yes \\ Daniel & yes \\ Stefan & yes \\ Thomas & yes}

A \emph{klt pair} $(X,Δ)$ consists of a normal variety $X$ and a Weil
$ℚ$-divisor $Δ= \sum_i a_i D_i$ with $a_i ∈ ℚ ∩ (0,1)$ such that $K_X + Δ$ is
$ℚ$-Cartier and such that $\discrep(X,Δ) > -1$, where the discrepancy
$\discrep(X,Δ)$ is defined in \cite[Def.~2.28]{KM98} using
\cite[Def.~2.25]{KM98}.  In contrast to \cite[Def.~2.34]{KM98} we do not allow
non-effective boundary divisors $Δ$.

\begin{defn}
  A normal, quasi-projective variety $X$ is called \emph{klt space} if there
  exists an effective $ℚ$-divisor $Δ$ that makes the pair $(X,Δ)$ klt.
\end{defn}

\begin{rem}
  An klt space $X$ is normal, hence non-singular in codimension one.  Better
  still, recall from \cite[Sect.~3.7]{GKPT15} that klt spaces have quotient
  singularities in codimension two.  If $ℰ_X$ is any coherent reflexive sheaf on
  $X$, this allows us to define $ℚ$-Chern classes $\what{c}_1(ℰ_X)$ and
  $\what{c}_1(ℰ_X)$, as well as $ℚ$-Chern characters
  $$
  \what{ch}_1(ℰ_X) = \what{c}_1(ℰ_X) \quad \text{and} \quad \what{ch}_2(ℰ_X) :=
  \frac{1}{2}\bigl(\what{c}_1(ℰ_X)² - 2· \what{c}_2(ℰ_X)\bigr).
  $$
\end{rem}

\part{Existence of harmonic bundle structures}
\label{part:0}

%
% Do not edit the following line.  The text is automatically updated by
% subversion.
%
\svnid{$Id: 03-harmonic.tex 1269 2019-02-27 07:58:09Z taji $}

\section{Harmonic bundles}\label{sect:harmonic}
\approvals{Behrouz & yes \\ Daniel & yes \\ Stefan & yes \\ Thomas & yes}

Harmonic bundles are key tools in nonabelian Hodge theory that provide the link
between flat structures and Higgs fields.  We briefly recall the definition,
explain relevant properties, and recall the notion of ``tameness'' that is used
to establish a good theory in non-compact, compactifiable situations.  Finally,
we prove a boundedness result for Higgs bundles admitting a tame and purely
imaginary harmonic structure.

\begin{factdef}[\protect{Harmonic bundle, cf.~\cite[Sect.~1]{MR3087348}}]\label{def:harm}
  Let $M$ be a complex manifold.  Consider a tuple $𝔼 = (E, \bar{∂}, θ, h)$
  comprised of the following data.
  \begin{itemize}
  \item A holomorphic vector bundle $(E, \bar{∂})$ and a Hermitian metric $h$ on
    $E$.
  \item A Higgs field $θ : ℰ → ℰ ⊗ Ω¹_X$, where $ℰ = \ker \bar{∂}$ is the sheaf
    of holomorphic sections.
  \end{itemize}
  By minor abuse of notation, we will also write $θ$ for the induced $𝒜⁰$-linear
  morphism $θ : 𝒜⁰(E) → 𝒜^{1,0}(E)$.  Let $θ^h : 𝒜⁰(E) → 𝒜^{0,1}(E)$ be the
  adjoint\footnote{In local coordinates, if $θ = \sum_k θ_k\, d z_k$, then
    $θ^h = \sum_k θ_k^{*}\, d \bar{z}_k$, where $θ_k^*$ is the adjoint of $θ_k$
    with respect to the metric $h$.} of $θ$ with respect to the metric $h$, and
  let $∂$ be the $(1,0)$-part of the unique Chern-connection compatible with
  both the metric $h$ and the complex structure $\bar{∂}$.  Then,
  \begin{equation}\label{eq:Harm}
    ∇_𝔼 := ∂ + \bar{∂} + θ + θ^h,
  \end{equation}
  is a connection.  The tuple $𝔼$ is called a \emph{harmonic bundle} if $∇_𝔼$ is
  flat.
\end{factdef}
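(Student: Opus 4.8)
The content of the Fact is the assertion that the operator $\nabla_{\mathbb{E}} = \partial + \bar\partial + \theta + \theta^h$ of \eqref{eq:Harm} is a connection on $E$, that is, a $\mathbb{C}$-linear map $\scr A^0(E) \to \scr A^1(E)$ satisfying the Leibniz rule $\nabla_{\mathbb{E}}(f\cdot s) = df\otimes s + f\cdot\nabla_{\mathbb{E}}(s)$ for all smooth functions $f$ and all smooth sections $s$ of $E$. The statement is essentially formal, and the plan is to decompose $\nabla_{\mathbb{E}}$ as the sum of a genuine connection and a tensorial term, and then to use that adding a bundle-valued one-form to a connection again yields a connection.

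First I would note that the first two summands recombine into the Chern connection. Indeed, $\partial$ is by construction the $(1,0)$-part of the Chern connection $D_h$ of the Hermitian holomorphic bundle $(E,\bar\partial,h)$, and the $(0,1)$-part of $D_h$ equals $\bar\partial$ — this being precisely the defining property of the Chern connection. Hence $\partial + \bar\partial = D_h$ is a connection; in particular it is $\mathbb{C}$-linear and satisfies the Leibniz rule. This step is classical and may simply be cited.

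Next I would check that $A := \theta + \theta^h$ is $\scr A^0$-linear, hence given by a global section of $\scr A^1(\End E)$, the sheaf of smooth $\End(E)$-valued one-forms. For $\theta$ this is immediate: the Higgs field $\theta\colon \scr E \to \scr E\otimes\Omega^1_X$ is $\scr O_X$-linear by the definition of a Higgs sheaf, so the operator $\scr A^0(E) \to \scr A^{1,0}(E)$ it induces is $\scr A^0$-linear. For $\theta^h$, recall that it is the fibrewise $h$-adjoint of $\theta$: in local coordinates, writing $\theta = \sum_k \theta_k\,dz_k$ one has $\theta^h = \sum_k \theta_k^*\,d\bar z_k$, where $\theta_k \mapsto \theta_k^*$ is pointwise adjunction with respect to the smooth, non-degenerate metric $h$. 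Since this adjunction is a pointwise operation and $h$ varies smoothly, $\theta^h$ is again a smooth bundle map, and for the fixed datum $\theta$ it is $\scr A^0$-linear as an operator $\scr A^0(E) \to \scr A^{0,1}(E)$. (The assignment $\theta\mapsto\theta^h$ is itself only $\mathbb{C}$-antilinear, but that is irrelevant here.) Thus $A$ is tensorial with values in $\scr A^{1,0}(E)\oplus\scr A^{0,1}(E) = \scr A^1(E)$. Finally I would conclude: for any connection $D$ and any $A \in \scr A^1(\End E)$ one has $(D+A)(f\cdot s) = df\otimes s + f\cdot D(s) + f\cdot A(s) = df\otimes s + f\cdot(D+A)(s)$, and $D+A$ is $\mathbb{C}$-linear as a sum of $\mathbb{C}$-linear maps; applying this with $D = D_h$ gives that $\nabla_{\mathbb{E}} = D_h + A$ is a connection.

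I do not expect a genuine obstacle here; the argument is purely formal, and the only two points that deserve to be spelled out are that the $(0,1)$-part of the Chern connection is $\bar\partial$, so that $\partial + \bar\partial$ really is $D_h$, and that $\theta^h$ is $\scr A^0$-linear, which follows from the pointwise nature of metric adjunction. I would also add the remark that neither the integrability condition $\theta\wedge\theta = 0$ built into the notion of a Higgs field nor any compatibility relation between $\theta$, $\theta^h$ and $h$ is used at this stage: those hypotheses become relevant only when one additionally requires $\nabla_{\mathbb{E}}$ to be flat, which is exactly the condition that, by definition, singles out the harmonic bundles among all such tuples $\mathbb{E}$.
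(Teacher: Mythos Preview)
Your argument is correct. The paper does not give a proof of this Fact; it is stated as a definition-with-fact and attributed to \cite[Sect.~1]{MR3087348}, so there is no ``paper's own proof'' to compare against. Your verification that $\partial+\bar{\partial}$ is the Chern connection and that $\theta+\theta^h$ is $\scr{A}^0$-linear, hence that their sum is again a connection, is exactly the standard unpacking of this statement and would be an appropriate supplement if one were asked to justify the word ``Fact''.
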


\begin{notation}[Flat bundles and local systems associated with harmonic bundles]
  Given a harmonic bundle $𝔼 = (E, \bar{∂}, θ, h)$ as in Fact and
  Definition~\ref{def:harm}, we denote the associated flat bundle by $(E, ∇_𝔼)$
  and write $\aE ∈ \LSys_M$ for the local system.
\end{notation}

\begin{notation}[Sheaves admitting a harmonic bundle structure]\label{def:carries}
  Let $M$ be a complex manifold.
  \begin{itemize}
  \item Given a locally free sheaf $ℰ$ on $M$ with associated holomorphic bundle
    $(E, \bar{∂})$, we say that $ℰ$ \emph{admits a harmonic bundle structure} if
    there exists a harmonic bundle of the form $(E, \bar{∂}, θ, h)$.
    
  \item Given a locally free Higgs sheaf $(ℰ, θ)$ on $M$ with associated
    holomorphic bundle $(E, \bar{∂})$, we say that $(ℰ, θ)$ \emph{admits a
      harmonic bundle structure} if there exists a harmonic bundle of the form
    $(E, \bar{∂}, θ, h)$.
    
  \item We say that a flat bundle $(E,∇)$ \emph{admits a harmonic bundle
      structure} if there exists a harmonic bundle $𝔼 = (E, \bar{∂}, θ, h)$ such
    that $∇ = ∇_𝔼$.
  \end{itemize}
  If $X$ is a smooth, quasi-projective manifold and $ℰ$ a locally free sheaf on
  $X$, we say that $ℰ$ admits a harmonic bundle structure if its analytification
  $ℰ^{an}$ on the complex manifold $X^{an}$ admits a harmonic bundle structure.
  Analogously for algebraic Higgs bundles $(ℰ, θ)$ on $X$.
\end{notation}

\begin{rem}\label{rem:restriction}
  In the setup of Notation~\ref{def:carries}, let $N$ be a complex submanifold
  of $M$.  Assume that $(ℰ, θ)$ admits a harmonic bundle structure.  Then, an
  easy local computation shows that the locally free Higgs sheaf $(ℰ, θ)|_N$
  admits a harmonic bundle structure given by restriction.
\end{rem}

\subsection{Tame and purely imaginary bundles}
\approvals{Behrouz & yes \\ Daniel & yes \\ Stefan & yes \\ Thomas & yes}

In order to study Higgs bundles on quasi-projective, non-projective varieties,
we consider ``tame'' harmonic bundles.  These are harmonic bundles on the
complement of a divisor whose growth near the divisor is sufficiently
controlled.

\subsubsection{Basic definitions}
\approvals{Behrouz & yes \\ Daniel & yes \\ Stefan & yes \\ Thomas & yes}

The following is not Simpson's original definition of ``tameness'', but is
equivalent to it.

\begin{defn}[\protect{Tame harmonic bundle, \cite[Sect.~22.1 and Lem.~22.1]{MR2283665}}]\label{def:tame}
  Let $M$ be a complex manifold, let $D ⊂ M$ be a divisor with simple normal
  crossings, and let $𝔼 = (E, \bar{∂}, θ, h)$ be a harmonic bundle on $M ∖ D$.
  The harmonic bundle $𝔼$ is called \emph{tame with respect to $(M,D)$} if there
  exists a locally free sheaf $ℰ_M$ on $M$, a sheaf morphism
  $$
  θ_M: ℰ_M → ℰ_M ⊗ Ω¹_M(\log D)
  $$
  and an isomorphism $ℰ_M|_{M ∖ D} ≅ ℰ$ that identifies $θ_M|_{M ∖ D}$ with $θ$.
  We call $(ℰ_M, θ_M)$ an \emph{extension of $(ℰ, θ)$}.
\end{defn}

\begin{factdef}[\protect{Purely imaginary bundles, \cite[Lem.~22.2]{MR2283665}}]
  In the setup of Definition~\ref{def:tame}, assume that $(E, \bar{∂}, θ, h)$ is
  tame, and let $(ℰ_M, θ_M)$ be an extension of $(ℰ, θ)$.  If $D_i ⊂ D$ is any
  component and $x ∈ D_i$ is any point, consider the residue and its restriction
  to $x$,
  $$
  \res_{D_i} θ_M ∈ \End \big( ℰ_M|_{D_i}\bigr) \quad\text{and}\quad (\res_{D_i}
  θ_M)|_x ∈ \End \big( ℰ_M|_x \bigr).
  $$
  Then, then the set of eigenvalues of $(\res_{D_i} θ_M )|_x$ is independent of
  the choice of $(ℰ_M,θ_M)$.

  The harmonic bundle $(E, \bar{∂}_E, θ, h)$ is called \emph{purely imaginary
    with respect to $(M,D)$} if all eigenvalues of the residues of $θ_M$ along
  the irreducible components of $D$ are purely imaginary for one (equivalently
  any) extension $(ℰ_M, θ_M)$ of $(ℰ, θ)$.  \qed
\end{factdef}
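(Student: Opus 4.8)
The plan is to reduce the claim to a statement about holomorphic functions, namely to identify the characteristic polynomial of $(\res_{D_i}θ_M)|_x$ with the value at $x$ of a polynomial built directly from the Higgs sheaf $(ℰ, θ)$ on $M ∖ D$, so that the choice of extension enters nowhere.  Since the assertion is local around $x$, I would work on a small — in particular connected — polydisc $U ⊆ M$ centred at $x$, with holomorphic coordinates $(z_1, \dots, z_n)$ chosen so that $D ∩ U = \{z_1 ⋯ z_k = 0\}$ and $D_i ∩ U = \{z_1 = 0\}$, and — after shrinking $U$ — so that $ℰ_M|_U$ is free.

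On $U$ I would expand $θ_M = A·\tfrac{dz_1}{z_1} + (\text{terms not involving }\tfrac{dz_1}{z_1})$ in the standard logarithmic frame $\tfrac{dz_1}{z_1}, \dots, \tfrac{dz_k}{z_k}, dz_{k+1}, \dots, dz_n$ of $Ω¹_U(\log D)$, where $A ∈ \End(ℰ_M|_U)$ is holomorphic.  By the very definition of the residue, $\res_{D_i}θ_M = A|_{D_i ∩ U}$, so the characteristic polynomial of $(\res_{D_i}θ_M)|_x$ equals $P(t)|_x$, where $P(t) := \det\bigl(t·\Id_{ℰ_M} - A\bigr)$ is monic in $t$ with coefficients holomorphic on $U$.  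The point I would then record is that, over $U ∖ D$, the endomorphism $A$ is nothing but $θ$ with its $Ω¹$-factor contracted against the holomorphic vector field $ξ := z_1·∂_{z_1}$, i.e.\ $A = (\Id ⊗ ι_ξ) \circ θ$: indeed $ι_ξ$ sends $\tfrac{dz_1}{z_1}$ to $1$ and annihilates every other frame covector of $Ω¹_U(\log D)$.  Under the isomorphism $ℰ_M|_{U ∖ D} ≅ ℰ|_{U ∖ D}$ that is part of the data of an extension, this composite — hence $A|_{U ∖ D}$, and therefore the restriction of the coefficients of $P(t)$ to $U ∖ D$ — depends only on $(ℰ, θ)|_{U ∖ D}$ and on $ξ$, not on the extension.

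To conclude, given two extensions $(ℰ_M, θ_M)$ and $(ℰ_M', θ_M')$, with associated polynomials $P(t)$ and $P'(t)$ (shrinking $U$ once more so that $ℰ_M'|_U$ is free as well), I would note that the coefficients of $P$ and of $P'$ are holomorphic functions on $U$ that agree on the dense open subset $U ∖ D$, hence agree on all of $U$ by continuity; evaluating at $x ∈ D_i$ shows that $(\res_{D_i}θ_M)|_x$ and $(\res_{D_i}θ_M')|_x$ have the same characteristic polynomial, in particular the same set of eigenvalues (with multiplicities).  This is exactly the independence that makes the notion of a \emph{purely imaginary} harmonic bundle meaningful.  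The only step with any content is the bookkeeping in the middle paragraph — verifying $\res_{D_i}θ_M = A|_{D_i}$ and the identity $A = (\Id ⊗ ι_ξ) \circ θ$ off $D$; everything else is formal.  If one prefers an argument that manifestly uses the harmonic structure, one may instead invoke Mochizuki's analytic description, in which the eigenvalues of $\res_{D_i}θ_M$ are recovered from the growth rates $|v|_h \sim |z_1|^{-β}$ of suitable eigensections of the harmonic metric $h$ near $D_i$; since $h$ is a datum on $M ∖ D$, the resulting eigenvalues are visibly extension-independent.
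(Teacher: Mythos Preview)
The paper does not supply its own proof of this statement: it is recorded as a \emph{Fact and Definition} with a citation to Mochizuki \cite[Lem.~22.2]{MR2283665} and closed with \qed.  Your argument is correct and entirely self-contained; the reduction to the identity $A = (\Id ⊗ ι_ξ)\,θ_M$ on $U$ and the observation that the coefficients of the characteristic polynomial are then holomorphic functions determined on the dense open set $U ∖ D$ by $(ℰ,θ)$ alone is exactly the right bookkeeping.

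By way of comparison, Mochizuki's original argument is analytic rather than algebraic: he extracts the eigenvalues of the residue from the asymptotic behaviour of the harmonic metric $h$ near $D_i$ (the alternative you sketch in your final sentence), which makes the independence of the extension manifest because $h$ lives on $M ∖ D$.  Your route has the virtue of being elementary and of showing slightly more --- equality of characteristic polynomials, hence of eigenvalues \emph{with multiplicity} --- while Mochizuki's route is the one that actually uses, and thereby justifies the relevance of, the harmonic structure in the hypothesis.
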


It is important to notice that the notion of tame purely imaginary bundles does
not depend on the compactification.

\begin{factdef}[\protect{Tame and purely imaginary bundles on quasi-projective varieties, \cite[Lem.~25.29]{MR2283665} and \cite[Cor.~8.7]{MR2281877}}]\label{factdef:harmonic_independent}
  Let $X$ be a smooth, quasi-projective variety and let
  $𝔼 := (E, \bar{∂}, θ, h)$ be a harmonic bundle on $X^{an}$.  Let
  $\overline{X}_1$ and $\overline{X}_2$ be two smooth, projective
  compactifications of $X$ such that $D_• := \overline{X}_• ∖ X$ are snc
  divisors.  Then, $𝔼$ is tame and purely imaginary with respect to
  $(\overline{X}_1,D_1)$ if and only if $𝔼$ is tame and purely imaginary with
  respect to $(\overline{X}_2, D_2)$.  We can therefore speak about \emph{tame
    and purely imaginary harmonic bundles} on $X^{an}$.  \qed
\end{factdef}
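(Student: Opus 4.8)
The plan is to reduce the assertion to the situation where one compactification dominates the other, and then to feed this into the cited results. First I would apply resolution of singularities to the closure of the diagonally embedded $X \subseteq \overline{X}_1 \times \overline{X}_2$, obtaining a third smooth projective compactification $\overline{X}_3$ of $X$ with $D_3 := \overline{X}_3 \setminus X$ a simple normal crossings divisor, together with proper birational morphisms $\pi_i : \overline{X}_3 \to \overline{X}_i$ (for $i = 1, 2$) that restrict to the identity on $X$. Since the property in question would then be compared between $\overline{X}_1$ and $\overline{X}_2$ by going through $\overline{X}_3$, it suffices to treat a single proper birational morphism $\pi : \overline{X}' \to \overline{X}$ of smooth projective compactifications of $X$ that restricts to the identity on $X$, with snc boundary divisors $D := \overline{X} \setminus X$ and $D' := \overline{X}' \setminus X$, and to show that $\mathbb{E}$ is tame and purely imaginary with respect to $(\overline{X}, D)$ precisely when it is with respect to $(\overline{X}', D')$.

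One of the two implications is essentially formal, and I would dispatch it first. Suppose $(\mathcal{E}_{\overline{X}}, \theta_{\overline{X}})$ is an extension realising tameness of $\mathbb{E}$ with respect to $(\overline{X}, D)$. Because $\pi^{-1}(D)_{\mathrm{red}} = D'$, functoriality of logarithmic differentials supplies a sheaf morphism $\pi^{*}\Omega^{1}_{\overline{X}}(\log D) \to \Omega^{1}_{\overline{X}'}(\log D')$; composing with the pull-back of $\theta_{\overline{X}}$ turns the locally free sheaf $\pi^{*}\mathcal{E}_{\overline{X}}$ into a logarithmic Higgs sheaf on $\overline{X}'$ whose restriction to $X$ is $(\mathcal{E}, \theta)$. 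Hence $\mathbb{E}$ is tame with respect to $(\overline{X}', D')$. Moreover $\pi$ sends the strict transform of each component of $D$ birationally onto that component, with residue of $\pi^{*}\theta_{\overline{X}}$ equal to the pull-back of the corresponding residue, while the residue along any $\pi$-exceptional component of $D'$ is an $\mathbb{N}$-linear combination of the (pairwise commuting) residues of $\theta_{\overline{X}}$ along those components of $D$ through which it maps; since $\mathbb{N}$-linear combinations of purely imaginary numbers are again purely imaginary, pure imaginarity transfers from $(\overline{X}, D)$ to $(\overline{X}', D')$ as well.

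The reverse implication — descending tameness and pure imaginarity from $\overline{X}'$ to the coarser $\overline{X}$ — is the substantive point, and here no purely algebraic argument suffices: the push-forward along $\pi$ of a logarithmic extension over $\overline{X}'$ is a priori only a reflexive sheaf on $\overline{X}$, not visibly locally free. To get around this I would invoke that tameness of a harmonic bundle is an intrinsic condition, detected locally near the boundary by the Riemann-extendability across $D$ of the coefficients of the characteristic polynomial of $z_{i}\,\theta(\partial_{z_{i}})$ — a condition manifestly unaffected by the coordinate changes induced on boundary strata by a birational modification — so that by \cite[Lem.~25.29]{MR2283665} tameness is independent of the chosen snc compactification, and the prolongation construction of Deligne--Simpson--Mochizuki then produces a \emph{locally free} logarithmic Higgs extension over $\overline{X}$. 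Since the residues along the components of $D$ are recovered, up to pull-back, from those along their strict transforms in $D'$, \cite[Cor.~8.7]{MR2281877} yields that pure imaginarity likewise descends. Putting the two implications together gives the equivalence, and hence the well-definedness asserted in the statement.

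I expect the descent step to be the main obstacle: proving that tameness and pure imaginarity with respect to a \emph{finer} compactification force a \emph{locally free} logarithmic Higgs extension with purely imaginary residues over a coarser one. This is exactly the point that cannot be handled by a naive push-forward and genuinely rests on the analytic prolongation (via norm-growth conditions with respect to the harmonic metric $h$) from the cited works of Mochizuki; everything else in the argument is either the formal resolution-of-singularities reduction or an elementary computation with logarithmic residues under blow-ups.
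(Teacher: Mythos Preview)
The paper gives no proof of this statement at all: it is stated as a \emph{Fact and Definition} with a bare \qed, the entire content being attributed to the two Mochizuki references in the heading. So there is nothing in the paper to compare your argument against beyond the citations themselves.

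Your sketch is more than the paper offers, and its architecture is sound. The reduction via a common snc refinement dominating both $\overline{X}_1$ and $\overline{X}_2$ is the standard move, and your ``formal'' direction (pulling back a logarithmic extension along $\pi$ and tracking residues) is correct; note that the residues along intersecting boundary components commute because $\theta \wedge \theta = 0$, and commuting endomorphisms are simultaneously triangularisable over $\mathbb{C}$, so the eigenvalue argument for $\mathbb{N}$-linear combinations goes through. You also correctly isolate the genuine content as the descent step, and correctly locate it in Mochizuki's analytic prolongation theory rather than in any sheaf-theoretic push-forward. One small caution: what you call the ``intrinsic'' characterisation of tameness (via extendability of the characteristic polynomial of $z_i\,\theta(\partial_{z_i})$) is not the definition the present paper adopts in Definition~\ref{def:tame}, but rather the equivalent formulation from \cite[Lem.~22.1]{MR2283665} that the paper cites there; since the paper explicitly records that equivalence, you are entitled to use it.
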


\begin{rem}[Automatic algebraicity I]\label{rem:autoalg}
  In the setup of Fact and Definition~\ref{factdef:harmonic_independent},
  setting as usual $ℰ := \ker \bar{∂}$, we may apply Serre's GAGA to the
  extension of $ℰ$ to a smooth projective simple normal crossings
  compactification of $X$ (as in Definition~\ref{def:tame}) to see that the
  holomorphic Higgs sheaf $(ℰ, θ)$ on $X^{an}$ can be endowed with an algebraic
  structure.
\end{rem}

\begin{remnot}[Uniqueness of the algebraic structure]\label{rem:autoalg1a}
  We will often consider the setup of Fact and
  Definition~\ref{factdef:harmonic_independent} in a situation where the smooth
  variety $X$ is a big open subset of a normal projective variety
  $\overline{X}$.  If $ℰ$ is any locally free sheaf on $X^{an}$ that admits a
  tame and purely imaginary harmonic bundle structure.  Then, $ℰ$ can be endowed
  with an algebraic structure and hence has a coherent extension to
  $\overline{X}^{an}$.  By \cite[Thm.~1]{MR0212214}, the analytic sheaf $ℰ$ will
  then have a \emph{unique} reflexive extension to $X^{an}$.  It then follows
  from GAGA that the induced algebraic structure on $ℰ$ is unique up to
  isomorphism; the same holds for the Higgs field as well.
  
  In this situation, we simply say that $ℰ$ and $(ℰ, θ)$ are algebraic, and
  freely switch between the analytic and the algebraic category if no confusion
  seems likely.
\end{remnot}

\begin{rem}\label{rem:tame_semisimple}
  Flat sheaves admitting a tame and purely imaginary harmonic bundle structure
  are necessarily semisimple, see \cite[Prop.~22.15]{MR2283665}.
\end{rem}

\begin{notation}[Bundles admitting tame, purely imaginary harmonic structures]\label{not:TPI}
  If $X$ is a smooth, quasi-projective variety, write
  $$
  \TPIL_X
  $$
  for the family of (algebraic) isomorphism classes of locally free (algebraic)
  sheaves on $X$ that admit a tame, purely imaginary harmonic bundle structure.
  Write
  $$
  \TPIH_X
  $$
  for the family of algebraic isomorphism classes of locally free Higgs sheaves
  on $X$ that admit a tame, purely imaginary harmonic bundle structure.  Abusing
  notation, we write $(ℰ, θ) ∈ \TPIH_X$ to indicate that a given Higgs sheaf
  $(ℰ, θ)$ on $X$ is locally free and admits a tame, purely imaginary harmonic
  bundle structure $𝔼$.  If $(ℰ, θ) ∈ \TPIH_X$ and if $X$ is a big open subset
  of a normal projective variety $\overline{X}$, we also say that $(ℰ, θ)$ is
  \emph{induced} by $𝔼$.
\end{notation}

\begin{lem}[Flat subsheaves in tame and purely imaginary harmonic bundles]\label{lem:flshtpihb}
  Let $X$ be a smooth, quasi-projective variety and let
  $𝔼 = (E,\bar{∂}_E, θ, h)$ be a tame and purely imaginary harmonic bundle on
  $X^{an}$ with induced flat connection $∇_𝔼$.  If $F ⊆ E$ is any complex
  subbundle that is invariant with respect to $∇_𝔼$, then $\bar{∂}$ restricts to
  equip $F$ with the structure of a Higgs-invariant, holomorphic subbundle of
  $(E, \bar{∂})$.
\end{lem}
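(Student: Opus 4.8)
The plan is to pass from the flat picture to the world of local systems, where an invariant subbundle is nothing but a subrepresentation, and then to feed the answer back through Mochizuki's structure theory for tame, purely imaginary harmonic bundles.

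First I would note that a $\cC^∞$-subbundle $F ⊆ E$ is invariant under $∇_{\mathbb{E}}$ exactly when it is stable under parallel transport, i.e.\ when the fibre $F_{x_0}$ over a base point $x_0$ is a subrepresentation of $π_1(X^{an})$; thus $F$ is the $\cC^∞$-bundle underlying a sub-local-system $\mathsf{F} ⊆ \aE$ of the local system associated with $\mathbb{E}$. By Remark~\ref{rem:tame_semisimple} the local system $\aE$ is semisimple, hence so is $\mathsf{F}$, and $\mathsf{F}$ automatically respects the canonical isotypic decomposition of $\aE$: writing $\aE ≅ \bigoplus_i \mathsf{W}_i ⊗ M_i$ with pairwise non-isomorphic irreducible local systems $\mathsf{W}_i$ and finite-dimensional multiplicity spaces $M_i$, Schur's Lemma forces $\mathsf{F} = \bigoplus_i \mathsf{W}_i ⊗ N_i$ for linear subspaces $N_i ⊆ M_i$.

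Next I would invoke Mochizuki's description of the harmonic structure (\cite{MR2283665}; for a compact base this goes back to Simpson, \cite{MR1179076}): on a tame, purely imaginary harmonic bundle the canonical isotypic decomposition of the underlying flat bundle lifts to a decomposition $\mathbb{E} ≅ \bigoplus_i \mathbb{W}_i ⊗ M_i$ of harmonic bundles, where $\mathbb{W}_i$ is the essentially unique tame, purely imaginary harmonic structure on $\mathsf{W}_i$ and the multiplicity space $M_i$ carries its constant structure; equivalently, the idempotents cutting out the isotypic pieces are simultaneously flat, holomorphic, $θ$-commuting and $h$-self-adjoint. In particular both $\bar∂$ and $θ$ are block-diagonal for this decomposition, acting on the $i$-th block as $\bar∂_{\mathbb{W}_i} ⊗ \Id_{M_i}$ and $θ_{\mathbb{W}_i} ⊗ \Id_{M_i}$ respectively. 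By the previous paragraph $F = \bigoplus_i W_i ⊗ N_i$, where $W_i$ is the $\cC^∞$-bundle underlying $\mathbb{W}_i$, and these operators manifestly preserve $F$. Consequently the restriction $\bar∂|_F$ endows $F$ with the structure of a holomorphic subbundle of $(E, \bar∂)$, and the $θ$-invariance just established is exactly the statement that this holomorphic subbundle is Higgs-invariant.

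I expect the second step to be the real obstacle: it rests on the structural fact that a sub-local-system of a semisimple, tame, purely imaginary flat bundle is automatically a sub-harmonic-bundle — equivalently, that the second fundamental form of $F$ with respect to the harmonic metric vanishes. Over a compact Kähler base this is Simpson's Bochner-type argument, but here $X^{an}$ is only quasi-projective, and legitimising the required integration by parts near the boundary is precisely where tameness and pure imaginariness enter; this is the analytic input one borrows from Mochizuki's work. A more hands-on alternative would be to use semisimplicity to split off a $∇_{\mathbb{E}}$-invariant complement $Q$ of $F$, to equip $(F, ∇_{\mathbb{E}}|_F)$ and $(Q, ∇_{\mathbb{E}}|_Q)$ with their own tame, purely imaginary harmonic structures, and to identify $\mathbb{E}$ with their direct sum via uniqueness of the harmonic metric; but the same analytic content reappears inside that uniqueness statement, and one must then also check that the identifying isomorphism can be chosen compatible with the given splitting $E = F ⊕ Q$.
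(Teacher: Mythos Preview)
Your argument is correct and rests on the same essential ingredient as the paper's, namely Mochizuki's structural description of tame, purely imaginary harmonic bundles \cite[Lem.~A.13]{MR2310103}, which says that the harmonic data are compatible with the decomposition of the underlying semisimple flat bundle into irreducibles. The paper packages this slightly differently: after reducing to the case where $F$ is irreducible, it uses Mochizuki to conclude that the $h$-orthogonal complement $F^\perp$ is again $\nabla_{\mathbb{E}}$-invariant, and then invokes Simpson's explicit formulas reconstructing $\bar\partial$ and $\theta$ from $\nabla_{\mathbb{E}}$ and $h$ to see that each operator preserves the orthogonal splitting $E = F \oplus F^\perp$. Your route via the tensor-product form of the isotypic decomposition sidesteps that operator computation; the two arguments are equivalent, yours being a bit more structural while the paper's makes the role of the metric more visible. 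Your closing worries are well placed but already discharged by the citation: the analytic content you flag is precisely what \cite[Lem.~A.13]{MR2310103} supplies.
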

\begin{proof}
  It suffices to consider the case where $F$ with its induced flat structure is
  irreducible.  But there, the description of the tame and purely imaginary
  harmonic bundle $𝔼$ in \cite[Lem.~A.13]{MR2310103} immediately implies that
  the metric complement $F^\perp$ of $F$ is likewise invariant with respect to
  $∇_𝔼$.  The claim then follows from the description of the operators $\bar{∂}$
  and $θ$ in terms of $∇_𝔼$ and $h$, cf.~\cite[p.~13]{MR1179076}.
  \Publication{The preprint version of this paper spells out all
    details.}\Preprint{In detail, decompose the connection $∇_𝔼: 𝒜⁰(E) → 𝒜¹(E)$
    according to type, $∇_𝔼 = ∇^{1,0}_𝔼 + ∇^{0,1}_𝔼$, and observe that both $F$
    and $F^\perp$ are invariant with respect to each operator $∇^{•,•}_𝔼$.  Let
    $δ^{1,0}$ and $δ^{0,1}$ be the unique operators of type $(1,0)$ and $(0,1)$
    such that $∇^{1,0}_𝔼 + δ^{0,1}$ and $∇^{0,1}_𝔼 + δ^{1,0}$ are metric
    connections for the Hermitian metric $h$.  Spelled out\footnote{By standard
      abuse of notation, the pairings $h(•, •)$ that appear in the right hand
      side of \eqref{eq:xb.nx} refer to the natural extensions of the Hermitian
      metric $h : 𝒜⁰(E) ⨯ 𝒜⁰(E) → 𝒜⁰$ to sesquilinear pairings between sheaves
      of forms with values in $E$, that is, $h : 𝒜^p(E) ⨯ 𝒜^q(E) → 𝒜^{p+q}$.},
    this means that for all $σ$ and $τ$ in $𝒜⁰(E)$, we have an equality in $𝒜¹$,
    \begin{equation}\label{eq:xb.nx}
      \begin{aligned}
        dh(σ,τ) & = h \bigl(∇^{1,0}_𝔼σ + δ^{0,1}σ,\, τ\bigr) + h\bigl(σ,\, ∇^{1,0}_𝔼τ + δ^{0,1}τ \bigr) \\
        & = \underbrace{h \bigl(∇^{1,0}_𝔼σ,\, τ\bigr) + h\bigl(σ,\, δ^{0,1}τ \bigr)}_{\text{type $(1,0)$}} + \underbrace{h\bigl(δ^{0,1}σ,\, τ\bigr) + h\bigl(σ,\, ∇^{1,0}_𝔼τ \bigr);}_{\text{type $(0,1)$}}
      \end{aligned}
    \end{equation}
    analogous equations hold for $∇^{0,1}_𝔼 + δ^{1,0}$.  Next, recall from
    \cite[p.~13]{MR1179076} that
    $$
    \bar{∂} = \frac{1}{2}·\bigl(∇^{0,1}_𝔼 + δ^{0,1}\bigr) \qquad\text{and}\qquad
    θ = \frac{1}{2}·\bigl(∇^{1,0}_𝔼 - δ^{1,0}\bigr).
    $$
    To show that $F$ is invariant with respect to $\bar{∂}$, or equivalently
    that $F$ is a holomorphic subbundle of $(E,\bar{∂})$, it will therefore
    suffice to show that $F$ is invariant with respect to $δ^{0,1}$.  In plain
    words, we need to show that given any section $σ ∈ 𝒜⁰(F)$, then
    $δ^{0,1}(σ) ∈ 𝒜^{0,1}(F)$.  Equivalently, we need to show that given any
    tuple of sections, $σ ∈ 𝒜⁰(F)$ and $τ ∈ 𝒜⁰(F^\perp)$, then
    $h (δ^{0,1}(σ), τ) = 0$ in $𝒜^{0,1}$.  This, however, follows by looking at
    the $(0,1)$-part of Equation~\eqref{eq:xb.nx},
    $$
    0 = (0,1)\text{-part of } dh(σ,τ) = h\bigl(δ^{0,1}σ,\, τ\bigr) +
    \underbrace{h\bigl(σ,\, ∇^{1,0}_𝔼τ \bigr)}_{\mathclap{=0\text{ since
          $F^\perp$ is invariant under } ∇_𝔼}}.
    $$
    Looking at $∇^{0,1}_𝔼 + δ^{1,0}$ instead of $∇^{1,0}_𝔼 + δ^{0,1}$, we see
    that $F$ is also invariant with respect to the operator $δ^{1,0}$ and hence
    with respect to $θ$.  The holomorphic subbundle $F$ of $(E,\bar{∂}_E)$ is
    therefore Higgs-invariant, as claimed.}
\end{proof}

\subsubsection{Existence and uniqueness}
\approvals{Behrouz & yes \\ Daniel & yes \\ Stefan & yes \\ Thomas & yes}

If $X$ is smooth and quasi-projective, then a result of Jost-Zuo \cite{JZ97}
implies that every semisimple flat bundle on $X$ admits a tame and purely
imaginary metric, which is essentially unique\footnote{See the argument in
  \cite[Thm.~25.28]{MR2283665}.}.  We summarise the results relevant for us in
the following theorem, see \cite[Lem.~A.13]{MR2310103} and further references
given there.

\begin{thm}[Existence and uniqueness of harmonic structures]\label{thm:JZM}
  Let $X$ be a smooth quasi-projective variety.  Then, every semisimple flat
  vector bundle $(E,∇_E)$ on $X$ admits a tame, purely imaginary harmonic bundle
  structure $𝔼 = (E, \bar{∂}, θ, h)$.  The metric $h$ is unique up to flat
  automorphisms of $E$, and, as a consequence, the operators in the induced
  decomposition \eqref{eq:Harm} are independent of the choice of such $h$.  \qed
\end{thm}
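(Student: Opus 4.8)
The plan is to treat this as a summary of deep results of Corlette, Jost--Zuo, Simpson and Mochizuki and to explain how they fit together; the genuinely analytic input I would quote rather than reprove. First I would reduce to the case where $(E,\nabla_E)$ is irreducible. Semisimplicity lets us write $(E,\nabla_E)=\bigoplus_i (E_i,\nabla_i)$ with each $(E_i,\nabla_i)$ irreducible, and if each summand carries a tame, purely imaginary harmonic structure $\mathbb{E}_i=(E_i,\bar\partial_i,\theta_i,h_i)$, then the orthogonal direct sum $\bigoplus_i\mathbb{E}_i$ is a harmonic bundle: the connection produced by \eqref{eq:Harm} is $\bigoplus_i\nabla_{\mathbb{E}_i}=\nabla_E$, which is flat. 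Tameness and pure imaginarity pass to the direct sum, since for a common smooth projective compactification $(\overline X,D)$ with snc boundary one may extend by $\bigl(\bigoplus_i\mathcal E_{i,\overline X},\bigoplus_i\theta_{i,\overline X}\bigr)$, and the residue of $\bigoplus_i\theta_{i,\overline X}$ along a component of $D$ is the direct sum of the residues of the summands, so its eigenvalue set is the union of theirs.

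For an irreducible $(E,\nabla_E)$ I would fix a smooth projective compactification $\overline X\supseteq X$ with snc boundary $D$. The associated representation of $\pi_1(X^{an})$ is irreducible, hence reductive, so the theorem of Corlette and Jost--Zuo \cite{JZ97} supplies a harmonic metric $h$ on $E$; the growth estimates in their construction show that the harmonic bundle $(E,\bar\partial,\theta,h)$ built from $h$ via \eqref{eq:Harm} is tame with respect to $(\overline X,D)$ in the sense of Definition~\ref{def:tame}. That it is moreover purely imaginary is the point where semisimplicity enters essentially: by Mochizuki's analysis of tame harmonic bundles with semisimple monodromy the residues of the Higgs field along the components of $D$ have purely imaginary eigenvalues, see \cite[Ch.~25]{MR2283665} and \cite[Cor.~8.7]{MR2281877}. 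By Fact and Definition~\ref{factdef:harmonic_independent} neither property depends on the chosen compactification, so $(E,\bar\partial,\theta,h)$ is a tame, purely imaginary harmonic bundle on $X^{an}$.

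For uniqueness I would argue that on an irreducible flat bundle the harmonic metric is unique up to a positive real scalar — the non-compact analogue of the Corlette--Simpson uniqueness statement, cf.\ the argument recalled in \cite[Thm.~25.28]{MR2283665} — and then, decomposing an arbitrary semisimple $(E,\nabla_E)$ into isotypical components, conclude that any two tame purely imaginary harmonic metrics on $E$ differ by a $\nabla_E$-flat automorphism of $E$ (a scalar on each irreducible factor by Schur's lemma, possibly permuting isomorphic factors). The final assertion then follows formally: writing $\nabla_E=\nabla_E^{1,0}+\nabla_E^{0,1}$ and letting $\delta^{1,0},\delta^{0,1}$ be the operators making $\nabla_E^{0,1}+\delta^{1,0}$ and $\nabla_E^{1,0}+\delta^{0,1}$ into $h$-metric connections, one has $\bar\partial=\tfrac12(\nabla_E^{0,1}+\delta^{0,1})$ and $\theta=\tfrac12(\nabla_E^{1,0}-\delta^{1,0})$ (and dually for $\partial$ and $\theta^h$), cf.\ \cite[p.~13]{MR1179076}; so replacing $h$ by a flat automorphism merely conjugates all of these operators by that automorphism, whence the induced decomposition is well-defined up to this identification.

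I expect the main obstacle to be precisely the two imported facts: the existence of a \emph{tame} harmonic metric over the non-compact quasi-projective base $X$, and the verification that semisimplicity of the monodromy forces the residues of $\theta$ to be purely imaginary. Both are substantial analytic theorems — the first from \cite{JZ97} (after Corlette and Simpson in the compact, respectively curve, cases), the second from \cite{MR2283665,MR2281877} — and in a self-contained treatment they, rather than the bookkeeping sketched above, would carry all the weight.
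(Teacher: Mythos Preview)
Your proposal is correct in spirit and matches the paper's treatment: the paper does not prove this theorem at all but states it as a summary of known results, placing a \qed\ directly after the statement and citing \cite{JZ97}, \cite[Thm.~25.28]{MR2283665}, and \cite[Lem.~A.13]{MR2310103} in the preamble. Your sketch of how to assemble existence from Jost--Zuo, pure imaginarity from Mochizuki, and uniqueness via \cite[Thm.~25.28]{MR2283665} is exactly the content behind those citations, so you have in fact supplied more detail than the paper does.
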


The following consequences will be used later.

\begin{cor}[Higgs bundles determined by induced connection]\label{cor:uniq}
  Let $X$ be a smooth, quasi-projective variety and let
  $𝔼 := (E, \bar{∂}_E, θ_ℰ, h_E)$ and $𝔽 := (F, \bar{∂}_F, θ_ℱ, h_F)$ be two
  tame, purely imaginary harmonic bundles on $X^{(an)}$, with associated locally
  free sheaves $ℰ$ and $ℱ$.  Assume that the flat bundles $(E, ∇_𝔼)$ and
  $(F, ∇_𝔽)$ are isomorphic.  Then, also the corresponding holomorphic Higgs
  bundles are holomorphically isomorphic, $(ℰ, θ_ℰ) ≅ (ℱ, θ_ℱ)$.
\end{cor}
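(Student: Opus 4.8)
The plan is to leverage the uniqueness part of Theorem~\ref{thm:JZM} together with the explicit formulas expressing $\bar{∂}$ and $θ$ in terms of the flat connection $∇$ and the harmonic metric $h$, which already appear in the proof of Lemma~\ref{lem:flshtpihb} (following \cite[p.~13]{MR1179076}). First I would fix an isomorphism of flat bundles $\phi : (E, ∇_𝔼) \xrightarrow{\sim} (F, ∇_𝔽)$; using $\phi$ we may transport the metric $h_F$ on $F$ to a Hermitian metric $\phi^* h_F$ on $E$. The key observation is that $(E, \bar{∂}_E, θ_ℰ, \phi^* h_F)$ is again a harmonic bundle structure on the \emph{same} flat bundle $(E, ∇_𝔼)$: indeed, harmonicity, tameness and pure imaginarity are all intrinsic to the flat bundle once one knows they hold for $𝔽$, because $\phi$ identifies $(E,∇_𝔼)$ with $(F,∇_𝔽)$ as flat bundles, and these properties are preserved under such isomorphisms (tameness and pure imaginarity because they only depend on the flat bundle by Fact and Definition~\ref{factdef:harmonic_independent} and Remark~\ref{rem:tame_semisimple}, together with the independence-of-compactification statement).

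Next I would invoke the uniqueness clause of Theorem~\ref{thm:JZM}: the flat bundle $(E, ∇_𝔼)$ carries, up to flat automorphisms, a unique tame purely imaginary harmonic metric, and—crucially—the resulting decomposition $∇_𝔼 = ∂ + \bar{∂} + θ + θ^h$ into operators of the four types is \emph{independent} of the choice of such metric. Applying this to the two metrics $h_E$ and $\phi^* h_F$ on $(E, ∇_𝔼)$, we conclude that the holomorphic structure operator $\bar{∂}$ and the Higgs field $θ$ obtained from $(E,∇_𝔼,h_E)$ agree with those obtained from $(E, ∇_𝔼, \phi^* h_F)$. But the latter are, by construction, the pullbacks along $\phi$ of $\bar{∂}_F$ and $θ_ℱ$ respectively. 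Hence $\phi$ intertwines $\bar{∂}_E$ with $\bar{∂}_F$ and $θ_ℰ$ with $θ_ℱ$, i.e.\ $\phi$ is an isomorphism of holomorphic Higgs bundles $(ℰ, θ_ℰ) ≅ (ℱ, θ_ℱ)$, as claimed.

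The main obstacle—really the only point requiring care—is the verification that $(E, \bar{∂}_E, θ_ℰ, \phi^* h_F)$ is the harmonic bundle structure produced by the metric $\phi^* h_F$ on the flat bundle $(E,∇_𝔼)$, and in particular that its associated operators are literally $\phi^*$ of those of $𝔽$. This is a naturality statement: the construction in Fact and Definition~\ref{def:harm} that recovers $(\bar{∂}, θ)$ from $(∇, h)$ commutes with pullback along flat isometries, and $\phi : (E, ∇_𝔼, \phi^* h_F) \to (F, ∇_𝔽, h_F)$ is by definition such a map. Once this functoriality is recorded, everything else is a formal consequence of Theorem~\ref{thm:JZM}. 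I would also remark that one does not even need $\phi$ to be an isometry for $h_E$; the point is purely that $h_E$ and $\phi^* h_F$ are two harmonic metrics on one and the same flat bundle, so Theorem~\ref{thm:JZM} forces the induced Higgs data to coincide.
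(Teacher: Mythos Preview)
Your approach is essentially identical to the paper's: pull back $𝔽$ along the flat isomorphism $\phi$ to get a second tame, purely imaginary harmonic structure on $(E,∇_𝔼)$, then invoke the uniqueness clause of Theorem~\ref{thm:JZM} to conclude that the induced operators agree. One presentational slip to clean up: in your second paragraph you write the pulled-back harmonic bundle as $(E, \bar{∂}_E, θ_ℰ, \phi^* h_F)$, which already inserts the operators $\bar{∂}_E, θ_ℰ$ that you are trying to identify with $\phi^*\bar{∂}_F, \phi^*θ_ℱ$; the correct tuple at that stage is $\phi^*𝔽 = (E, \phi^*\bar{∂}_F, \phi^*θ_ℱ, \phi^*h_F)$, exactly as the paper writes it, and only \emph{after} applying Theorem~\ref{thm:JZM} do you learn that these coincide with $\bar{∂}_E, θ_ℰ$. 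Your final paragraph shows you understand this, so the fix is purely notational.
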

\begin{proof}
  By assumption, there exists a smooth isomorphism $Φ : E → F$ such that
  $Φ^*∇_{𝔽} = ∇_𝔼$.  The pull-back $Φ^*𝔽 = (E, Φ^* \bar{∂}_F, Φ^*θ_ℱ, Φ^*h_F)$
  will thus equip $E$ with a second tame, purely imaginary harmonic bundle
  structure, whose associated flat connection $∇_{Φ^*𝔽}$ equals $∇_𝔼$.  But then
  it follows from Theorem~\ref{thm:JZM} that the differential operators in the
  two harmonic bundle structures agree: $\bar{∂}_E = Φ^* \bar{∂}_F$ and
  $θ_ℰ = Φ^*θ_ℱ$.  In other words, $Φ$ is holomorphic and induces an holomorphic
  isomorphism of Higgs bundles.
\end{proof}

\begin{rem}[Automatic algebraicity II]\label{rem:autoalgII}
  In the setting of Corollary~\ref{cor:uniq}, recall from
  Remark~\ref{rem:autoalg} that the Higgs sheaves $(ℰ, θ_ℰ)$ and $(ℱ, θ_ℱ)$ are
  in fact algebraic.  If $X$ is isomorphic to a big open subset in a normal
  variety, then the holomorphic isomorphism given in Corollary~\ref{cor:uniq}
  extends to an isomorphism between reflexive closures, and is therefore
  likewise algebraic.
\end{rem}

\begin{cor}[Extension of harmonic bundles from hyperplanes]\label{cor:g1}
  Let $X$ be a normal, projective variety of dimension $\dim X > 2$, and let
  $H ∈ \Div(X)$ be ample.  If $m ≫ 0$ is large enough and $D ∈ |m·H|$ is
  general, then $D$ is normal, $D_{\reg} = D ∩ X_{\reg}$, and the restriction
  map $\TPIH_{X_{\reg}} → \TPIH_{D_{\reg}}$ is surjective.
\end{cor}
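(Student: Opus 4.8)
The plan is to combine a Lefschetz-type restriction theorem for the fundamental group with the existence and uniqueness machinery of Theorem~\ref{thm:JZM}, and to treat the "tame and purely imaginary" conditions by reducing them to a calculation on smooth compactifications.

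First I would deal with the elementary geometry. Since $H$ is ample and $m \gg 0$, a general member $D \in |m \cdot H|$ is irreducible and, by Bertini, smooth away from the base locus and away from $X_{\sing}$; since $X$ is normal (hence $S_2$ and regular in codimension one) and $\dim X > 2$, a general $D$ is normal, avoids the finitely many codimension-$\geq 2$ bad behaviours except where forced, and satisfies $D \cap X_{\sing} = D_{\sing}$, i.e. $D_{\reg} = D \cap X_{\reg}$. This is standard Bertini-on-the-smooth-locus plus the fact that $X_{\sing}$ has codimension $\geq 2$, so cutting by a general divisor drops its codimension in $D$ by at most — actually keeps codimension $\geq 2$ in some cases and $\geq 1$ always is enough for normality of $D$ via Serre's criterion.

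Next, the core point: given $(\sE_{D_{\reg}}, \theta_{D_{\reg}}) \in \TPIH_{D_{\reg}}$, I want to produce a preimage in $\TPIH_{X_{\reg}}$. By Remark~\ref{rem:tame_semisimple} the associated flat bundle on $D_{\reg}$ is semisimple, with monodromy representation $\rho \colon \pi_1(D_{\reg}) \to \Gl_r(\bC)$. The key input is a Lefschetz hyperplane theorem: for $m \gg 0$ and $D$ general, the inclusion $D_{\reg} \hookrightarrow X_{\reg}$ induces an \emph{isomorphism} $\pi_1(D_{\reg}) \to \pi_1(X_{\reg})$ — here one uses that $X_{\reg}$ is a smooth quasi-projective variety of dimension $> 2$ and $D_{\reg}$ is an ample (in the appropriate sense) divisor in it; the relevant statements are in Goresky–MacPherson / Hamm–Lê type results, or can be bootstrapped from the projective Lefschetz theorem on a resolution together with the codimension-$\geq 2$ condition on the singular locus. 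Granting this, $\rho$ extends uniquely to a representation $\tilde\rho$ of $\pi_1(X_{\reg})$, giving a semisimple flat bundle $(E, \nabla_E)$ on $X_{\reg}$ whose restriction to $D_{\reg}$ is the flat bundle we started with. Applying Theorem~\ref{thm:JZM} to $(E,\nabla_E)$ produces a tame, purely imaginary harmonic bundle $\bbE = (E, \bar\partial, \theta, h)$ on $X_{\reg}$, hence $(\sE_{X_{\reg}}, \theta_{\sE_{X_{\reg}}}) \in \TPIH_{X_{\reg}}$.

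Finally I would check that this is genuinely a preimage under the restriction map, i.e. that $(\sE_{X_{\reg}}, \theta)|_{D_{\reg}} \cong (\sE_{D_{\reg}}, \theta_{D_{\reg}})$ as Higgs bundles. By Remark~\ref{rem:restriction} the restriction $\bbE|_{D_{\reg}}$ is again a tame, purely imaginary harmonic bundle on $D_{\reg}$ — tameness and pure imaginarity of the restriction follow from choosing compatible snc compactifications of $X_{\reg}$ and $D_{\reg}$ (blow up so that $D$ and the boundary are snc) and restricting the logarithmic extension, using Fact and Definition~\ref{factdef:harmonic_independent} to know the notions are compactification-independent. Its associated flat bundle is $(E,\nabla_E)|_{D_{\reg}}$, which by construction has the same monodromy as our original flat bundle on $D_{\reg}$; since that original Higgs bundle also underlies a tame, purely imaginary harmonic bundle, Corollary~\ref{cor:uniq} forces a holomorphic isomorphism of Higgs bundles $(\sE_{X_{\reg}},\theta)|_{D_{\reg}} \cong (\sE_{D_{\reg}}, \theta_{D_{\reg}})$. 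This proves surjectivity.

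The main obstacle I expect is the Lefschetz isomorphism $\pi_1(D_{\reg}) \xrightarrow{\ \sim\ } \pi_1(X_{\reg})$ for $D$ a general member of $|m \cdot H|$ inside the \emph{open} variety $X_{\reg}$: one must be careful that the classical projective/quasi-projective Lefschetz theorems apply in this singular-ambient-but-smooth-locus setting, and that $D_{\reg}$ is exactly $D \cap X_{\reg}$ rather than something smaller. Everything downstream (tameness/pure imaginarity of restrictions, uniqueness) is then bookkeeping with the results already assembled in this section.
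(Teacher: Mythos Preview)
Your proposal is correct and follows essentially the same route as the paper: Lefschetz for $\pi_1(D_{\reg}) \xrightarrow{\sim} \pi_1(X_{\reg})$ (the paper cites Goresky--MacPherson directly), extension of the semisimple local system, Jost--Zuo/Theorem~\ref{thm:JZM} on $X_{\reg}$, then Corollary~\ref{cor:uniq} to identify the restricted Higgs bundle with the original. You are in fact slightly more careful than the paper about why the restriction of a tame, purely imaginary harmonic bundle to $D_{\reg}$ remains tame and purely imaginary; the paper treats this as implicit in Remark~\ref{rem:restriction}.
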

\begin{proof}
  If $m ≫ 0$ is large enough, then $|m·H|$ is basepoint free, we have
  $D_{\reg} = D ∩ X_{\reg}$ by Bertini, and the Lefschetz hyperplane theorem for
  fundamental groups, \cite[Thm.\ in Sect.~II.1.2]{GoreskyMacPherson} implies
  that the natural morphism $π_1(D_{\reg}) → π_1(X_{\reg})$ is isomorphic.  Now,
  assuming we are given a $(ℰ_{D_{\reg}}, θ_{ℰ_{D_{\reg}}}) ∈ \TPIH_{D_{\reg}}$,
  we need to show that there exists
  $(ℱ_{X_{\reg}}, θ_{ℱ_{X_{\reg}}}) ∈ \TPIH_{X_{\reg}}$ such that
  \begin{equation}\label{eq:ghjkg}
    (ℰ_{D_{\reg}}, θ_{ℰ_{D_{\reg}}}) ≅ (ℱ_{X_{\reg}},
    θ_{ℱ_{X_{\reg}}})|_{D_{\reg}}.
  \end{equation}
  To this end, choose a tame, purely imaginary harmonic bundle $𝔼_{D_{\reg}}$
  inducing $(ℰ_{D_{\reg}}, θ_{ℰ_{D_{\reg}}})$, which exists by assumption, and
  recall from Remark \ref{rem:tame_semisimple} that the induced local system
  $\aE_{D_{\reg}} ∈ \LSys_{D_{\reg}}$ is semisimple.  Then, using the Lefschetz
  Theorem we can extend $\aE_{D_{\reg}}$ to a semisimple local system
  $\aF_{X_{\reg}} ∈ \LSys_{X_{\reg}}$ in a unique manner.  By the Jost-Zuo
  existence result for harmonic structures, Theorem~\ref{thm:JZM}, there will
  then exist a tame, purely imaginary harmonic bundle $𝔽_{X_{\reg}}$ on
  $X_{\reg}$ that induces $\aF_{X_{\reg}}$.  If
  $(ℱ_{X_{\reg}}, θ_{ℱ_{X_{\reg}}})$ is the associated Higgs bundle on
  $X_{\reg}$, Corollary~\ref{cor:uniq} together with Remark~\ref{rem:autoalgII}
  gives the desired (algebraic) isomorphism~\eqref{eq:ghjkg}.
\end{proof}

\subsection{TPI-Harmonic bundles on klt spaces}
\approvals{Behrouz & yes \\ Daniel & yes \\ Stefan & yes \\ Thomas & yes}%
\label{ssec:poticwXs0}

Let $X$ be a projective klt space.  Using the nonabelian Hodge correspondence
for locally free Higgs sheaves on klt spaces, \cite[Thm.~3.4]{GKPT17}, we will
show boundedness of the family $\TPIH_{X_{\reg}}$.  As a consequence, we will
obtain in Corollary~\ref{cor:restr} a criterion for the existence of harmonic
structures: a given Higgs bundle on $X_{\reg}$ admits a harmonic structure if
and only if its restriction to the smooth locus of a general hypersurface does.
The proof uses the existence, for every klt space $X$, of a ``maximally
quasi-étale cover''.  This is a quasi-étale cover $γ: Y → X$ such that the
natural map of étale fundamental groups,
$\widehat{π}_1(Y_{\reg}) → \widehat{π}_1(Y)$, is isomorphic.  The existence of
such a cover was established in \cite[Thm.~1.5]{GKP13}.

\begin{prop}[Pull-back of Higgs sheaves to maximally quasi-étale cover]\label{prop:p1}
  Let $X$ be a projective klt space.  Let $γ: Y → X$ be a maximally quasi-étale
  cover, write $X° := X_{\reg}$, $Y° := γ^{-1}(X°)$ and consider the étale
  morphism $δ : Y° → X°$ given as the restriction of $γ$.  Given
  $(ℰ_{X°}, θ_{ℰ_{X°}}) ∈ \TPIH_{X°}$, there exists $(ℱ_Y, θ_{ℱ_Y}) ∈ \Higgs_Y$
  and an isomorphism
  \begin{equation}\label{eq:lp1}
    (ℱ_Y, θ_{ℱ_Y})|_{Y°} ≅ δ^* \bigl(ℰ_{X°}, θ_{ℰ_{X°}} \bigr).
  \end{equation}
  In particular, if $ℰ_X$ denotes the reflexive extension of $ℰ_{X°}$ to $X$,
  then $γ^{[*]} ℰ_X ≅ ℱ_Y$ is locally free and all its Chern classes vanish.
\end{prop}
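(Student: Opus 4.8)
The plan is to transport the given tame, purely imaginary harmonic bundle from $X°$ to $Y°$, to enlarge it across the set $Y_{\reg} ∖ Y°$ (which has codimension at least two), and finally to invoke the nonabelian Hodge correspondence for locally free Higgs sheaves on the \emph{maximally} quasi-étale space $Y$, \cite[Thm.~3.4]{GKPT17}, in order to extend it further, across $Y_{\sing}$. Along the way one uses freely that $Y$ is again a projective klt space, being a finite quasi-étale cover of one; that $Y°$ is smooth, being finite étale over the smooth variety $X°$; and that $Y°$ is a big open subset of $Y$, its complement in $Y_{\reg}$ being contained in $γ^{-1}(X_{\sing})$.

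First I would verify that $δ^*\bigl(ℰ_{X°}, θ_{ℰ_{X°}}\bigr) ∈ \TPIH_{Y°}$. Indeed, $(ℰ_{X°}, θ_{ℰ_{X°}})$ is induced by a tame, purely imaginary harmonic bundle $𝔼$ on $X°$, and pull-back under the finite étale morphism $δ$ preserves flatness as well as the local analytic conditions of tameness and pure-imaginariness — on smooth compactifications, $δ$ becomes a cover ramified at most along the boundary divisor, of Kummer type there, under which these properties are stable. By Remark~\ref{rem:tame_semisimple} the flat bundle $δ^*\aE$ on $Y°$ is then semisimple. Since $Y° ⊆ Y_{\reg}$ is a big open subset of the complex manifold $Y_{\reg}$, the inclusion induces an isomorphism of topological fundamental groups (equivalently, one may invoke Fact~\ref{fact:flatext}), so $δ^*\aE$ extends uniquely to a semisimple local system on $Y_{\reg}$. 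By the Jost-Zuo existence result, Theorem~\ref{thm:JZM}, that local system carries a tame, purely imaginary harmonic bundle structure $𝔽_{Y_{\reg}}$; write $(ℱ_{Y_{\reg}}, θ_{ℱ_{Y_{\reg}}}) ∈ \TPIH_{Y_{\reg}}$ for the associated Higgs bundle. The restriction $𝔽_{Y_{\reg}}|_{Y°}$ is again a tame, purely imaginary harmonic bundle on $Y°$, and its flat bundle coincides with that of $δ^*𝔼$; hence Corollary~\ref{cor:uniq} yields a holomorphic isomorphism of Higgs bundles $(ℱ_{Y_{\reg}}, θ_{ℱ_{Y_{\reg}}})|_{Y°} ≅ δ^*(ℰ_{X°}, θ_{ℰ_{X°}})$.

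I expect the genuine difficulty to lie in the last extension, across $Y_{\sing}$: on an arbitrary klt space, an element of $\TPIH_{Y_{\reg}}$ need not be the restriction of a \emph{locally free} Higgs sheaf on $Y$, and this is precisely the point at which the hypothesis that $Y$ be \emph{maximally} quasi-étale — established via \cite[Thm.~1.5]{GKP13} and ultimately resting on the finiteness of local fundamental groups of klt singularities — becomes indispensable. Granting the nonabelian Hodge correspondence for locally free Higgs sheaves on klt spaces, \cite[Thm.~3.4]{GKPT17}, applied to the maximally quasi-étale space $Y$, the Higgs bundle $(ℱ_{Y_{\reg}}, θ_{ℱ_{Y_{\reg}}})$ is the restriction to $Y_{\reg}$ of a locally free Higgs sheaf $(ℱ_Y, θ_{ℱ_Y}) ∈ \Higgs_Y$; restricting further to $Y°$ and composing with the isomorphism of the previous paragraph proves~\eqref{eq:lp1}. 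It remains to deduce the concluding assertions. The sheaves $γ^{[*]}ℰ_X$ and $ℱ_Y$ are both reflexive on $Y$, and they agree on the big open subset $Y°$: indeed $(γ^{[*]}ℰ_X)|_{Y°} = δ^*ℰ_{X°}$, which by~\eqref{eq:lp1} is isomorphic to $ℱ_Y|_{Y°}$. Hence $γ^{[*]}ℰ_X ≅ ℱ_Y$ on all of $Y$, and in particular $γ^{[*]}ℰ_X$ is locally free. Finally, $(ℱ_Y, θ_{ℱ_Y}) ∈ \Higgs_Y$ corresponds under Theorem~\ref{thm:klt} to a local system on $Y$, so the underlying topological vector bundle of $ℱ_Y$ carries a flat connection; therefore all Chern classes of $ℱ_Y ≅ γ^{[*]}ℰ_X$ vanish.
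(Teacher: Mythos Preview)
Your approach is essentially the paper's, with one unnecessary detour and one genuine gap in the final step.

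The detour: you first extend the local system from $Y°$ to $Y_{\reg}$ and apply Jost--Zuo there before passing to $Y$. The paper skips this and extends the semisimple local system $\aF_{Y°}$ directly from $Y°$ to all of $Y$ in one stroke, using that $\what{π}_1(Y°) → \what{π}_1(Y)$ is isomorphic (combining $π_1(Y°) ≅ π_1(Y_{\reg})$ with the maximally quasi-étale condition) and Grothendieck's result \cite[Thm.~1.2b]{MR0262386}. Your intermediate stop at $Y_{\reg}$ is harmless but adds nothing.

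The gap: you assert that ``granting the nonabelian Hodge correspondence \cite[Thm.~3.4]{GKPT17} \ldots\ the Higgs bundle $(ℱ_{Y_{\reg}}, θ_{ℱ_{Y_{\reg}}})$ is the restriction to $Y_{\reg}$ of a locally free Higgs sheaf $(ℱ_Y, θ_{ℱ_Y}) ∈ \Higgs_Y$''. But that correspondence is a functor $η_Y : \LSys_Y → \Higgs_Y$; it does not by itself tell you that $η_Y(\aF_Y)|_{Y_{\reg}}$ agrees with the Higgs bundle you produced via Jost--Zuo from $\aF_Y|_{Y_{\reg}}$. This compatibility is a separate fact: the paper invokes \cite[Prop.~3.11]{GKPT17}, which says that $(ℱ_Y, θ_{ℱ_Y})|_{Y°}$ itself admits a tame, purely imaginary harmonic bundle structure whose associated local system is $\aF_{Y°}$, and then applies Corollary~\ref{cor:uniq} once more to identify it with $δ^*(ℰ_{X°}, θ_{ℰ_{X°}})$. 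You need to insert exactly this step; without it, you have two Higgs bundles on $Y°$ coming from the same local system by two different constructions, and no argument linking them.
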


\begin{remnot}\label{rem:p1}
  In the setting of Proposition~\ref{prop:p1}, if $γ$ is Galois with group $G$,
  then $(ℱ_Y, θ_{ℱ_Y})$ carries the structure of a Higgs $G$-sheaf.  Writing
  $(ℱ_{Y°}, θ_{ℱ_{Y°}}) := (ℱ_Y, θ_{ℱ_Y})|_{Y°}$, the invariant push-forward of
  the $G$-equivariant Higgs field $θ_{ℱ_{Y°}}$,
  $$
  (δ_* θ_{ℱ_{Y°}})^G : \underbrace{\bigl( δ_* ℱ_{Y°} \bigr)^G}_{≅ ℰ°} →
  \underbrace{\bigl( δ_* (ℱ_{Y°} ⊗ Ω¹_{Y°}) \bigr)^G}_{≅ ℰ_{X°} ⊗ Ω¹_{X°}
    \text{ since $δ$ is étale}},
  $$
  is immediately identified with the Higgs field $θ_{ℰ_{X°}}$.  Abusing
  notation, we write $(ℰ_{X°}, θ_{ℰ_{X°}}) = δ_* (ℱ_{Y°}, θ_{ℱ_{Y°}})^G$ in this
  context.
\end{remnot}

\begin{proof}[Proof of Proposition~\ref*{prop:p1}]
  Write $(ℱ_{Y°}, θ_{ℱ_{Y°}}) := δ^* (ℰ_{X°}, θ_{ℰ_{X°}})$ and choose a tame,
  purely imaginary harmonic bundle structure $𝔼_{X°}$ for
  $(ℰ_{X°}, θ_{ℰ_{X°}})$.  Recall from \cite[Lem.~25.29]{MR2283665} that the
  pullback $𝔽_{Y°} := δ^* 𝔼_{X°}$ is a tame and purely imaginary harmonic bundle
  structure for $(ℱ_{Y°}, θ_{ℱ_{Y°}})$.  The induced local system $\aF_{Y°}$ is
  semisimple by \cite[Prop.~22.15]{MR2283665}.  The assumption that $Y$ is
  maximally quasi-étale implies that $\aF_{Y°}$ extends from $Y°$ to a
  semisimple local system $\aF_Y ∈ \LSys_Y$ that is defined on all of $Y$, see
  \cite[Thm.~1.2b]{MR0262386} or \cite[Sect.~8.1]{GKP13}.  In particular, the
  nonabelian Hodge correspondence for klt spaces, \cite[Thm.~3.4]{GKPT17},
  applies to yield a locally free Higgs sheaf
  $(ℱ_Y, θ_{ℱ_Y}) := η_Y(\aF_Y) ∈ \Higgs_Y$.  More is true: We have seen in
  \cite[Prop.~3.11]{GKPT17} that $(ℱ_Y, θ_{ℱ_Y})|_{Y°}$ admits a tame, purely
  imaginary harmonic bundle structure $𝔽'_{Y°}$ whose associated local system
  $\aF'_{Y°}$ is isomorphic to $\aF_{Y°}$.  Corollary~\ref{cor:uniq} thus yields
  the desired isomorphism \eqref{eq:lp1}.
\end{proof}

\begin{cor}[Boundedness of $\TPIH$]\label{cor:p2}
  Let $X$ be a projective klt space.  Then, the families $\TPIH_{X_{\reg}}$ and
  $\TPIL_{X_{\reg}}$ are bounded, and so is the family
  $$
  \TPIR_X := \text{iso.\ classes of reflexive sheaves $ℱ$ on $X$ with
    $ℱ|_{X_{\reg}} ∈ \TPIL_{X_{\reg}}$}.
  $$
\end{cor}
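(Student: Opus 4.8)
The plan is to reduce the assertion to the boundedness of $\Higgs_Y$ on a suitable cover of $X$ and then to descend that boundedness; throughout I argue for each fixed rank $r$, which is all that is needed later, e.g.\ in Corollary~\ref{cor:restr}. First I would choose a maximally quasi-étale cover $γ \colon Y → X$, which we may take to be Galois with group $G$ by \cite[Thm.~1.5]{GKP13}, and set $X° := X_{\reg}$, $Y° := γ^{-1}(X°)$, $δ := γ|_{Y°}$. Two reductions are immediate. Every $\sE_{X°} ∈ \TPIL_{X°}$ carries a tame, purely imaginary harmonic structure $(E, \bar{∂}, θ, h)$ and hence underlies the object $(\sE_{X°}, θ) ∈ \TPIH_{X°}$, while conversely the sheaf underlying any object of $\TPIH_{X°}$ lies in $\TPIL_{X°}$; so $\TPIL_{X°}$ is the image of $\TPIH_{X°}$ under forgetting the Higgs field. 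Moreover, by uniqueness of reflexive extensions (Remark~\ref{rem:autoalg1a}), $\TPIR_X$ is exactly the family of reflexive extensions $\sE_X$ of the members of $\TPIL_{X°}$. It therefore suffices to prove that the family of pairs $\bigl\{ (\sE_X, θ_{\sE_{X°}}) : (\sE_{X°}, θ_{\sE_{X°}}) ∈ \TPIH_{X°} \bigr\}$ on $X$ is bounded.

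Next I would move the problem to $Y$. By Proposition~\ref{prop:p1} and Remark~\ref{rem:p1}, for each $(\sE_{X°}, θ_{\sE_{X°}}) ∈ \TPIH_{X°}$ the reflexive pull-back $\sF_Y := γ^{[*]}\sE_X$ is locally free of rank $r$, the induced Higgs field makes $(\sF_Y, θ_{\sF_Y})$ an object of $\Higgs_Y$ that is moreover a Higgs $G$-sheaf, and the original pair is recovered as the invariant direct image $\bigl( γ_*(\sF_Y, θ_{\sF_Y}) \bigr)^G$. Now the rank-$r$ part of $\Higgs_Y$ is bounded: an $H$-semistable locally free sheaf on the normal projective variety $Y$ with $\what{ch}_1(\sF_Y) · [H]^{n-1} = \what{ch}_2(\sF_Y) · [H]^{n-2} = 0$ has vanishing first $ℚ$-Chern class (Hodge index) and vanishing discriminant (Bogomolov inequality), so such sheaves form a bounded family by Langer's boundedness theorem for semistable sheaves; alternatively, via the nonabelian Hodge correspondence \cite[Thm.~3.4]{GKPT17} they are the Higgs bundles attached to the $r$-dimensional representations of the finitely presented group $π_1(Y)$, which form a scheme of finite type. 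Hence there is a scheme $S$ of finite type and a coherent sheaf $\sF_S$ on $Y \times S$, which after stratifying $S$ we may take flat over $S$, realising all the sheaves $\sF_Y$ above.

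It then remains to descend: to show that the passage from $\sF_Y$ to $\bigl( γ_*(\sF_Y, θ_{\sF_Y}) \bigr)^G$ takes bounded families to bounded families. The additional data on $\sF_Y$ — a Higgs field and a $G$-linearisation — are sections of coherent sheaves subject to closed conditions: a Higgs field is a section of $\sHom(\sF_Y, \sF_Y ⊗ Ω^{[1]}_Y)$ whose square vanishes, and a $G$-linearisation is a tuple of isomorphisms $g^*\sF_Y \xrightarrow{\sim} \sF_Y$, $g ∈ G$, satisfying the cocycle identity. Over $S$ the relative $\mathrm{Hom}$- and $\mathrm{Isom}$-functors are representable by schemes of finite type, and imposing the cocycle identity, the Higgs condition, and $G$-equivariance of the Higgs field cuts out a finite-type base $S'$ carrying a universal $G$-linearised Higgs family. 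Applying $(γ \times \Id)_*$, which commutes with base change because $γ$ is affine, and then splitting off the $G$-invariant direct summand by means of the averaging projector $\frac{1}{|G|} \sum_{g ∈ G} g$ — an operation compatible with restriction to fibres, the Higgs field descending in the same way as in Remark~\ref{rem:p1} — produces a family of finite type on $X$ through which every pair $(\sE_X, θ_{\sE_{X°}})$ with $(\sE_{X°}, θ_{\sE_{X°}}) ∈ \TPIH_{X°}$ factors. This yields boundedness of $\TPIH_{X_{\reg}}$; forgetting the Higgs field and passing to reflexive extensions, as in the first paragraph, gives that of $\TPIL_{X_{\reg}}$ and of $\TPIR_X$.

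I expect this last step, the descent in families, to be the main obstacle. Each of its ingredients — representability of $\mathrm{Hom}$- and $\mathrm{Isom}$-functors, base change for affine push-forward, and splitting off the invariant summand via the averaging projector — is routine, but they have to be combined carefully enough that the singularities of $X$ never interfere. The conceptual point is that all the genuinely analytic input, namely the existence of harmonic metrics and the nonabelian Hodge correspondence, has already been spent on $Y$, where it produces honest coherent sheaves, so that only algebraic finiteness statements remain to be assembled.
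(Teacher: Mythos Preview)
Your proposal is correct and follows essentially the same approach as the paper: pass to a maximally quasi-étale Galois cover $Y \to X$, use that $\Higgs_Y$ is bounded (the paper simply cites \cite[Cor.~7.2]{GKPT17} rather than rederiving it), add the $G$-linearisation and Higgs-field data as sections of coherent sheaves over a finite-type base, and then descend via the invariant push-forward $\delta_*(\,\cdot\,)^G$; the paper calls these last steps ``standard arguments, which we leave to the reader,'' and you have spelled them out. The only cosmetic difference is that you work with a fixed rank~$r$ whereas the paper does not make this restriction explicit (the cited boundedness result already controls all ranks at once), and that the paper formulates the descent via the \'etale restriction $\delta: Y^\circ \to X^\circ$ rather than via $\gamma$ itself, which is the cleaner choice since the Higgs field is only canonically defined on $X^\circ$.
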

\begin{proof}
  Choose a maximally quasi-étale cover $γ: Y → X$, Galois with group $G$, and
  use the same notation $X°$, $Y°$ and $δ$ as in Proposition~\ref{prop:p1}.
  Recall from \cite[Cor.~7.2]{GKPT17} that the family $\Higgs_Y$ is bounded.
  Standard arguments, which we leave to the reader, show that the following
  families are likewise bounded
  \begin{align*}
    \bA_1 & := \bigl\{ \text{loc.~free Higgs $G$-sheaves on $Y$ whose underlying Higgs sheaf is in} \Higgs_Y \bigr\} \\
    \bA_2 & := \bigl\{ \text{loc.~free Higgs sheaves on $X°$ of the form $δ_* ((ℱ_Y, θ_{ℱ_Y})|_{Y°} )^G$, $(ℱ_Y, θ_{ℱ_Y})$ in $\bA_1$} \bigr\}.
  \end{align*}
  But we have seen in Proposition~\ref{prop:p1} and Remark~\ref{rem:p1} that
  every locally free Higgs sheaf in $\TPIH_{X_{\reg}}$ appears in $\bA_2$.  This
  proves boundedness of $\TPIH_{X_{\reg}}$ and $\TPIL_{X_{\reg}}$.  Finally, by
  considering the push-forward from $X_{\reg}$ to $X$ and by using reflexivity
  of its members we conclude that $\TPIR_X$ is also bounded.
\end{proof}

The following corollary is now a direct consequence of
Remark~\ref{rem:restriction}, of the boundedness result above, of
Corollary~\ref{cor:g1}, and of the iterated Bertini-type theorem for bounded
families, \cite[Prop.~7.3]{GKPT17}.  We emphasise that it contains a criterion
for a reflexive Higgs sheaf to be locally free.

\begin{cor}[Restriction criterion for $\TPIH$]\label{cor:restr}
  Let $X$ be a projective klt space of dimension $n > 2$, let $H ∈ \Div(X)$ be
  ample, and let $(ℰ_{X_{\reg}}, θ_{ℰ_{X_{\reg}}})$ be a reflexive Higgs sheaf
  on $X_{\reg}$.  If $m ≫ 0$ is large enough, then there exists a dense, open
  $\aB° ⊆ |m·H|$ such that every hypersurface $D ∈ \aB°$ is a klt space,
  satisfies $D_{\reg} = D ∩ X_{\reg}$, and such that
  $$
  (ℰ_{X_{\reg}}, θ_{ℰ_{X_{\reg}}}) ∈ \TPIH_{X_{\reg}} \;\;\; ⇔ \;\;\;\;
  (ℰ_{X_{\reg}}, θ_{ℰ_{X_{\reg}}})|_{D_{\reg}} ∈ \TPIH_{D_{\reg}}.
  $$
  In plain words, the reflexive Higgs sheaf $(ℰ_{X_{\reg}}, θ_{ℰ_{X_{\reg}}})$
  is locally free and admits a tame and purely imaginary harmonic bundle
  structure if and only if its restriction to $D_{\reg}$ is locally free and
  admits a tame and purely imaginary harmonic bundle structure.  \qed
\end{cor}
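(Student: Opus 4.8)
The plan is to combine the three ingredients advertised just before the statement: the boundedness result for $\TPIH$ of Corollary~\ref{cor:p2}, the extension result of Corollary~\ref{cor:g1}, and the restriction behaviour of tame purely imaginary harmonic bundles under hyperplane sections (Remark~\ref{rem:restriction}). First I would observe that for $m \gg 0$ the linear system $|m{\cdot}H|$ is basepoint free, so by Bertini a general member $D \in |m{\cdot}H|$ is irreducible, normal, and satisfies $D_{\reg} = D \cap X_{\reg}$; moreover, since $X$ is klt, a general such $D$ is again a klt space (this is a standard inversion-of-adjunction / Bertini statement for klt pairs). Shrinking to a dense open $\aB_1^\circ \subseteq |m{\cdot}H|$ takes care of the geometric hypotheses on $D$ that are asserted in the statement.

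The forward implication is the easy one: if $(\sE_{X_{\reg}}, \theta_{\sE_{X_{\reg}}}) \in \TPIH_{X_{\reg}}$, then by definition $\sE_{X_{\reg}}$ is locally free and carries a tame, purely imaginary harmonic bundle structure $\mathbb{E}$, and Remark~\ref{rem:restriction} says the restriction to any smooth submanifold — in particular to $D_{\reg}$ — inherits such a structure. Hence $(\sE_{X_{\reg}}, \theta_{\sE_{X_{\reg}}})|_{D_{\reg}} \in \TPIH_{D_{\reg}}$, and this works for \emph{every} $D \in \aB_1^\circ$, with no genericity needed.

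For the reverse implication, suppose that for a general $D$ the restriction $(\sE_{X_{\reg}}, \theta_{\sE_{X_{\reg}}})|_{D_{\reg}}$ lies in $\TPIH_{D_{\reg}}$; in particular it is locally free on $D_{\reg}$ and is induced by some tame purely imaginary harmonic bundle. By Corollary~\ref{cor:g1} this restricted Higgs bundle extends to some $(\sF_{X_{\reg}}, \theta_{\sF_{X_{\reg}}}) \in \TPIH_{X_{\reg}}$ with $(\sF_{X_{\reg}}, \theta_{\sF_{X_{\reg}}})|_{D_{\reg}} \cong (\sE_{X_{\reg}}, \theta_{\sE_{X_{\reg}}})|_{D_{\reg}}$. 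Now $\sF_{X_{\reg}}$ extends to a reflexive sheaf $\sF_X$ on $X$ lying in the bounded family $\TPIR_X$ of Corollary~\ref{cor:p2}; together with the fixed reflexive sheaf $\sE_X$ (the reflexive extension of the given $\sE_{X_{\reg}}$) this is still a bounded family of reflexive sheaves on $X$. The point is then to invoke the iterated Bertini-type restriction theorem for bounded families, \cite[Prop.~7.3]{GKPT17}: after possibly enlarging $m$ and shrinking to a dense open $\aB^\circ \subseteq |m{\cdot}H|$, restriction to $D$ is "faithful enough" on this bounded family that an isomorphism of the restricted reflexive Higgs sheaves on $D_{\reg}$ forces an isomorphism $(\sE_{X_{\reg}}, \theta_{\sE_{X_{\reg}}}) \cong (\sF_{X_{\reg}}, \theta_{\sF_{X_{\reg}}})$ of reflexive Higgs sheaves on $X_{\reg}$ already upstairs. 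Since $\sF_{X_{\reg}}$ is locally free and lies in $\TPIH_{X_{\reg}}$, so does $\sE_{X_{\reg}}$, which is exactly the claim (including that $\sE_{X_{\reg}}$ is locally free).

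The main obstacle is the last step: one must check that \cite[Prop.~7.3]{GKPT17} genuinely applies in the present reflexive (rather than locally free) setting and that it detects not merely abstract isomorphism classes of sheaves but isomorphisms of \emph{Higgs} sheaves, i.e.\ that the Higgs field is also recovered from the restriction. One should spell out that the family $\{(\sE_X,\theta), (\sF_X, \theta)\}$ — or more safely, the full bounded family $\TPIR_X$ equipped with Higgs fields — is bounded \emph{as Higgs sheaves}, so that the iterated-hyperplane argument separates Higgs-sheaf isomorphism classes. Once this bookkeeping is in place, the rest is a formal combination of the quoted results, and the genericity locus $\aB^\circ$ is obtained as the intersection of $\aB_1^\circ$ with the dense opens produced by Corollary~\ref{cor:g1} and by \cite[Prop.~7.3]{GKPT17}.
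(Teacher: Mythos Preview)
Your plan is essentially the paper's own argument, and you have correctly isolated the one genuine subtlety. The paper resolves your ``main obstacle'' not by forcing \cite[Prop.~7.3]{GKPT17} to work for reflexive sheaves, but by passing to the big open subset $X^{\circ} \subseteq X_{\reg}$ on which $\sE_{X_{\reg}}$ is already locally free and applying the proposition there, to the bounded family of \emph{locally free} Higgs sheaves obtained by restricting members of $\TPIH_{X_{\reg}}$ to $X^{\circ}$. That proposition then supplies a smooth complete intersection curve $C \subset D \cap X^{\circ}$ on which agreement of Higgs bundles forces $(\sF_{X^{\circ}}, \theta_{\sF_{X^{\circ}}}) \cong (\sE_{X^{\circ}}, \theta_{\sE_{X^{\circ}}})$; since both $\sE_{X_{\reg}}$ and $\sF_{X_{\reg}}$ are reflexive and $X^{\circ}$ is big in $X_{\reg}$, this isomorphism extends to all of $X_{\reg}$, giving $(\sE_{X_{\reg}}, \theta_{\sE_{X_{\reg}}}) \cong (\sF_{X_{\reg}}, \theta_{\sF_{X_{\reg}}}) \in \TPIH_{X_{\reg}}$. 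With this manoeuvre in hand your bookkeeping concerns (Higgs fields, reflexivity) evaporate: \cite[Prop.~7.3]{GKPT17} is invoked only for locally free Higgs sheaves, exactly as it is stated.
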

\begin{proof}
  The conditions on the singularities of the hypersurface $D$ stated above are
  met by a Zariski-open subset of any basepoint free linear system owing to
  Seidenberg's theorem \cite[Thm.~1.7.1]{BS95} and \cite[Lem.~5.17]{KM98}.

  The ``$⇒$'' direction is contained in Remark~\ref{rem:restriction}.  For the
  other direction, let $X°$ be the big open subset of $X_{\reg}$ consisting of
  points where $ℰ_{X_{\reg}}$ is locally free, and let $\aF°$ be the family of
  locally free Higgs sheaves $(ℱ_{X°}, θ_{ℱ_{X°}})$ obtained by restricting
  Higgs sheaves $(ℱ_{X_{\reg}}, θ_{ℱ_{X_{\reg}}}) ∈ \TPIH_{X_{\reg}}$ to $X°$.
  From Corollary~\ref{cor:p2} above we obtain that $\aF°$ is bounded (as a
  family of Higgs sheaves).  Let $m ≫ 0$ be large enough and $D ∈ |m H|$ be
  general enough for both Corollary~\ref{cor:g1} and \cite[Prop.~7.3]{GKPT17} to
  apply to $D$ (as the first member of a tuple) with respect to $\aF°$ and
  $(ℰ_{X°}, θ_{ℰ_{X°}}) := (ℰ_{X_{\reg}}, θ_{ℰ_{X_{\reg}}})|_{X°}$.  By
  assumption, $(ℰ_{X_{\reg}}, θ_{ℰ_{X_{\reg}}})|_{D_{\reg}} ∈ \TPIH_{D_{\reg}}$,
  and hence Corollary~\ref{cor:g1} implies that there exists a locally free
  Higgs sheaf $(ℱ_{X_{\reg}}, θ_{ℱ_{X_{\reg}}}) ∈ \TPIH_{X_{\reg}}$ whose
  restriction to $D_{\reg}$ is isomorphic to
  $(ℰ_{X_{\reg}}, θ_{ℰ_{X_{\reg}}})|_{D_{\reg}}$.  Choosing a smooth, projective
  curve $C ⊂ D ∩ X°$ as in \cite[Prop.~7.3]{GKPT17}, it clearly follows that
  $(ℱ_{X°}, θ_{ℱ_{X°}})|_C ≅ (ℰ_{X°}, θ_{ℰ_{X°}})|_C$, from which we infer in a
  first step using \emph{loc.\ cit.} that
  $(ℱ_{X°}, θ_{ℱ_{X°}}) ≅ (ℰ_{X°}, θ_{ℰ_{X°}})$ and in a second step using
  reflexivity of both $ℱ_{X_{\reg}}$ and $ℰ_{X_{\reg}}$ that
  $$
  (ℰ_{X_{\reg}}, θ_{ℰ_{X_{\reg}}}) ≅ (ℱ_{X_{\reg}}, θ_{ℱ_{X_{\reg}}}) ∈
  \TPIH_{X_{\reg}}.  \eqno\qedhere
  $$
\end{proof}

%
% Do not edit the following line.  The text is automatically updated by
% subversion.
%
\svnid{$Id: 04-higgs.tex 1269 2019-02-27 07:58:09Z taji $}

\section{Higgs bundles and Higgs sheaves}\label{sect:QHiggs}
\approvals{Behrouz & yes \\ Daniel & yes \\ Stefan & yes \\ Thomas & yes}

To prepare for the proof of the uniformisation result in the Part~\ref{part:II}
of this paper and to fix notation, we briefly recall the notion of $ℚ$-varieties
in the specific case of surfaces; details concerning this notion can be found in
\cite[Part~I]{GKPT15}.  Section~\ref{ssec:stability} discusses stability notions
for Higgs sheaves that are defined only on the smooth locus of a projective klt
space.

\subsection{Higgs sheaves on $ℚ$-surfaces}
\approvals{Behrouz & yes \\ Daniel & yes \\ Stefan & yes \\ Thomas & yes}

For surfaces with quotient singularities, we show how a Higgs $ℚ$-sheaf can be
constructed from a Higgs sheaf that is defined on the smooth locus of the
underlying surface; the theory can certainly be developed to cover more general
cases, but we restrict ourselves to the material necessary for the arguments in
the proof of our main result.  For further material, the reader is referred to
\cite[Sect.~5.5]{GKPT15}, where the notion of Higgs $ℚ$-sheaves is defined in
general.

\begin{construction}[Quasi-étale $ℚ$-structure on a surface with quotient singularities]\label{construction:Q}
  Let $S$ be a normal, projective surface with only quotient singularities.  By
  \cite[Prop.~3.10]{GKPT15}, the surface $S$ can then be equipped with the
  structure of a quasi-étale $ℚ$-variety in the sense of \cite[Defs.~3.1 and
  3.2]{GKPT15}.  Choosing one such structure, say $S^ℚ$, we are given a finite
  set $A$ and for each $α ∈ A$ a smooth, quasi-projective variety $S_α$ and a
  diagram
  \begin{equation}\label{eq:vxbf}
    \xymatrix{ %
      S_α \ar[rrr]_{\text{Galois with group }G_α} \ar@/^.4cm/[rrrrrr]^{p_α\text{, quasi-étale}} &&& U_α \ar[rrr]_{p'_α\text{, étale}} &&& S,
    }
  \end{equation}
  such that images $p(S_α)$ cover $S$, and such that certain compatibility
  conditions hold; we refer the reader to \cite[Sect.~3.3]{GKPT15} for a further
  discussion.

  Next, recall from Mumford in \cite[Sect.~2]{MR717614} or
  \cite[Sect.~3.4]{GKPT15} that $S^ℚ$ admits a \emph{global, Cohen-Macaulay
    cover}.  In other words, there exists a finite, Galois morphism
  $γ: \what{S} → S$ from a normal (and hence Cohen-Macaulay) surface, with
  Galois group $G:=\Gal(\what{S}/ S)$, and for every $α ∈ A$ a commutative
  diagram as follows,
  $$
  \xymatrix{
    \what{S}_α \ar[rrrr]^{q_α}_{\text{Galois with group }H_α \triangleleft G} \ar@{^(->}[d]_{\text{incl.\ of open}} &&&& S_α \ar[rrrr]_{\text{Galois with group }G_α = G/H_α} &&&& U_α \ar[d]^{p'_α\text{, étale}} \\
    \what{S} \ar[rrrrrrrr]_{γ\text{, Galois with group }G} &&&&&&&& S.
  }
  $$
\end{construction}

\begin{construction}[Higgs $ℚ$-bundle from Higgs bundle on $S_{\reg}$]\label{cons:Q-shf}
  In the setup of Construction~\ref{construction:Q}, assume that $S_{\reg}$ is
  equipped with a locally free Higgs sheaf $(ℰ_{S_{\reg}}, θ_{ℰ_{S_{\reg}}})$.
  We will denote the reflexive extension of $ℰ_{S_{\reg}}$ to $S$ by $ℰ_S$.
  Slightly generalising \cite[Constructions~3.8 and 5.15]{GKPT15}, one
  constructs a locally free Higgs $ℚ$-sheaf $(ℰ_S, θ_{ℰ_S})^{[ℚ]}$ on $S^ℚ$,
  given by the collection $(ℰ_{S_α}, θ_{ℰ_{S_α}})$ of locally free Higgs sheaves
  on the $S_α$ that can be obtained by extending the Higgs bundles
  $(p_α|_{p_α^{-1}(S_{\reg})})^{*}(ℰ_{S_{\reg}}, θ_{ℰ_{S_{\reg}}})$ to a Higgs
  bundle on $S_α$ using the Riemann Extension Theorem.
  
  More precisely, we set $S_α° = S_α∖ p_α^{-1}(S_{\sing})$.  Each
  $ℰ_{S_α}° : = \big( p_α|_{p_α^{-1}(S_{\reg})} \big)^*(ℰ)$ extends to a locally
  free sheaf $ℰ_{S_α}$ on $S_α$.  In addition, as the induced Higgs fields on
  $ℰ_{S_α}°$ are sections of $\sEnd(ℰ_{S_α}°) ⊗ Ω¹_{S_α°}$, they extend to
  sections of $\sEnd(ℰ_{S_α}) ⊗ Ω¹_{S_α} $.  We denote these extended sections
  by $θ_{ℰ_{S_α}}$.  Now, the pull-backs $q_α^*(ℰ_{S_α}, θ_{ℰ_{S_α}})$ are
  locally free Higgs sheaves on $\what{S}_α$ that glue to give a locally free
  Higgs $G$-sheaf $(ℰ_{\what{S}}, θ_{ℰ_{\what{S}}})$ on $\what{S}$.  The
  construction works without change for sheaves without (or with the trivial)
  Higgs field.
\end{construction}

\begin{rem}\label{fact:u1}
  In Construction~\ref{cons:Q-shf}, we have $ℰ_{\what{S}} ≅ γ^{[*]} ℰ_S$ and
  therefore $γ_*(ℰ_{\what{S}})^G ≅ ℰ_S$.
\end{rem}

\begin{construction}[Pull-back out of a $ℚ$-structure]\label{cons:pbfq}
  In the setting of Construction~\ref{cons:Q-shf}, consider a $G$-equivariant
  resolution of singularities, $π: \wtilde{S} → \what{S}$.  Given an arbitrary
  Higgs sheaf $(ℱ,τ)$ on $\what{S}$, there is generally no way to define a Higgs
  field on the pull-back $π^* ℱ$ even in cases where $ℱ$ is locally free.  In
  our special setting, however, recall from \cite[Lem.~5.17]{GKPT15} that there
  exists a $G$-invariant Higgs field on the $G$-sheaf
  $ℰ_{\wtilde{S}} := π^* ℰ_{\what{S}}$ that agrees with the pull-back of
  $θ_{ℰ_{\what{S}}}$ wherever $\what{S}$ is smooth.
\end{construction}

\subsection{Stability and polystability}
\approvals{Behrouz & yes \\ Daniel & yes \\ Stefan & yes \\ Thomas & yes}
\label{ssec:stability}

Stability properties of Higgs sheaves on singular, projective varieties are
defined and discussed in detail in \cite[Sect.~5.6]{GKPT15}.  In the situation
at hand, it makes sense to generalise this to the case where a Higgs sheaf is
defined on the smooth locus only.  We recall \cite[Def.~2.19]{GKPT17} in our
setting.

\begin{defn}[Stability for Higgs sheaves on the smooth locus]\label{def:sHsl}
  Let $X$ be a normal, projective variety, let $H ∈ \Div(X)$ be nef and let
  $(ℰ_{X_{\reg}},θ_{ℰ_{X_{\reg}}})$ be a torsion free Higgs sheaf on $X_{\reg}$.
  We say that $(ℰ_{X_{\reg}},θ_{ℰ_{X_{\reg}}})$ is \emph{stable with respect to
    $H$} if any generically Higgs-invariant subsheaf
  $ℱ_{X_{\reg}} ⊊ ℰ_{X_{\reg}}$ with
  $0 < \rank ℱ_{X_{\reg}} < \rank ℰ_{X_{\reg}}$ satisfies
  $$
  \frac{c_1 \bigl( ι_* ℱ_{X_{\reg}} \bigr)·[H]}{\rank ℱ_{X_{\reg}}} < \frac{c_1
    \bigl( ι_* ℰ_{X_{\reg}} \bigr)·[H]}{\rank ℰ_{X_{\reg}}},
  $$
  where $ι : X_{\reg} → X$ is the inclusion.  Analogously, we define
  \emph{semistable} and \emph{polystable}.
\end{defn}

The stability notion of Definition~\ref{def:sHsl} is compatible with the
existing notions.  The proof of the following fact is elementary and therefore
omitted.

\begin{fact}[\protect{Compatibility with existing notions, \cite[Lem.~2.21]{GKPT17}}]\label{lem:HQ2}
  Let $X$ be a normal, projective variety, let $H ∈ \Div(X)$ be nef, and let
  $(ℰ_X,θ_{ℰ_X})$ be a reflexive Higgs sheaf that is defined on all of $X$.
  Then, the following are equivalent.
  \begin{enumerate}
  \item The sheaf $(ℰ_X, θ_{ℰ_X})$ is stable with respect to $H$.
  \item The sheaf $(ℰ_X, θ_{ℰ_X})|_{X_{\reg}}$ is stable with respect to $H$.
  \end{enumerate}
  The analogous statements hold in the semistable and polystable setting.  \qed
\end{fact}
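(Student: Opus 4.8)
The plan is to exploit that, since $X$ is normal, the inclusion $\iota : X_{\reg} \into X$ is a \emph{big} open immersion (i.e.\ $X \setminus X_{\reg}$ has codimension at least two), and that $\sE_X$ is reflexive, so that $\iota_*\bigl(\sE_X|_{X_{\reg}}\bigr) = \sE_X$ and the Higgs field $\theta_{\sE_X}$ is the unique reflexive extension of its restriction $\theta_{\sE_X}|_{X_{\reg}}$. With this in hand the argument is purely formal: restriction along $\iota$ and push-forward along $\iota$ set up a correspondence between coherent subsheaves of $\sE_X$ on $X$ and coherent subsheaves of $\sE_X|_{X_{\reg}}$ on $X_{\reg}$. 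First I would record the elementary bookkeeping: if $\sF_{X_{\reg}} \subseteq \sE_X|_{X_{\reg}}$ is coherent, then $\iota_*\sF_{X_{\reg}} \subseteq \iota_*\bigl(\sE_X|_{X_{\reg}}\bigr) = \sE_X$ is again coherent and restricts back to $\sF_{X_{\reg}}$; conversely, for a coherent $\sF_X \subseteq \sE_X$ the sheaves $\sF_X$ and $\iota_*\bigl(\sF_X|_{X_{\reg}}\bigr)$ agree on the big open set $X_{\reg}$, hence differ only in codimension $\geq 2$, so they have the same rank and the same first Chern class, and thus the same $H$-slope $c_1(\,\cdot\,)\cdot[H]/\rank$. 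Because the generic point of $X$ lies in $X_{\reg}$, the rank of a subsheaf, the property of being proper, and the property of being generically Higgs-invariant are all conditions at the generic point and therefore survive both operations; moreover the slope appearing on the right-hand side of Definition~\ref{def:sHsl} is by construction the slope of $\iota_*(\,\cdot\,)$, so the two slope conventions agree under this correspondence, as do the ambient slopes $\mu_H(\sE_X) = \mu_H(\sE_X|_{X_{\reg}})$.

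Granting this, the equivalence of the (semi)stability statements is immediate: a generically Higgs-invariant subsheaf $\sF_{X_{\reg}} \subsetneq \sE_X|_{X_{\reg}}$ of intermediate rank pushes forward to a generically Higgs-invariant subsheaf $\iota_*\sF_{X_{\reg}} \subsetneq \sE_X$ of the same rank and slope, and conversely a generically Higgs-invariant $\sF_X \subsetneq \sE_X$ of intermediate rank restricts to one of the same rank and slope on $X_{\reg}$ (still proper, since its rank is strictly smaller than that of $\sE_X|_{X_{\reg}}$); hence a violation of the slope inequality on one side produces one on the other, in both the strict (stable) and the non-strict (semistable) form. For polystability I would argue as follows. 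If $(\sE_X, \theta_{\sE_X})$ decomposes as a direct sum of stable Higgs sheaves all of slope $\mu_H(\sE_X)$, then restricting the decomposition to $X_{\reg}$ yields such a decomposition of $(\sE_X, \theta_{\sE_X})|_{X_{\reg}}$, the summands being stable by the already established stable case. Conversely, given a decomposition $(\sE_X, \theta_{\sE_X})|_{X_{\reg}} = \bigoplus_i (\sG_i, \tau_i)$ into stable Higgs summands of slope $\mu_H(\sE_X|_{X_{\reg}})$, each $\sG_i$ is a direct summand of the reflexive sheaf $\sE_X|_{X_{\reg}}$ and hence reflexive, so $\iota_*\sG_i$ is reflexive on $X$, and $(\iota_*\sG_i, \iota_*\tau_i)$ is a reflexive Higgs sheaf, stable of slope $\mu_H(\sE_X)$ by the stable case; since $\iota_*$ commutes with finite direct sums one recovers $(\sE_X, \theta_{\sE_X}) = \bigoplus_i (\iota_*\sG_i, \iota_*\tau_i)$, which is therefore polystable.

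I do not expect any serious obstacle here — this is precisely why the proof is called elementary. The only points that need a moment's care are the reflexive-sheaf manipulations: that $\iota_*\bigl(\sE_X|_{X_{\reg}}\bigr) = \sE_X$ and that $\theta_{\sE_X}$ is the unique extension of $\theta_{\sE_X}|_{X_{\reg}}$ across the codimension-$\geq 2$ singular locus (so that generic Higgs-invariance genuinely transfers under $\iota_*$), and that a direct summand of a reflexive sheaf is reflexive and its push-forward along the big open immersion $\iota$ stays reflexive. Once these standard facts are in place, everything reduces to the single observation that rank, first Chern class against $[H]$, and generic Higgs-invariance are all insensitive to what happens on a closed subset of codimension at least two.
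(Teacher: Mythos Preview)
The paper omits the proof entirely, calling it elementary and citing \cite[Lem.~2.21]{GKPT17}. Your argument is correct and is precisely the intended elementary bookkeeping via restriction and push-forward along the big open immersion $\iota : X_{\reg} \hookrightarrow X$.
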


The following lemma discusses behaviour of stability when taking tensor
products, and shows polystability for Higgs bundles in $\TPIH$.

\begin{lem}[Polystability in tensor products and $\TPIH$]\label{lem:stvbn}
  Let $X$ be a normal, projective variety, let $H$ be an ample divisor on $X$,
  and let $(ℱ_{X_{\reg}}, θ_{ℱ_{X_{\reg}}})$ be a Higgs bundle on $X_{\reg}$.
  Assume that one of the following holds.
  \begin{enumerate}
  \item The Higgs bundle $(ℱ_{X_{\reg}}, θ_{ℱ_{X_{\reg}}})$ is in
    $\TPIH_{X_{\reg}}$.
  \item The Higgs bundle $(ℱ_{X_{\reg}}, θ_{ℱ_{X_{\reg}}})$ is a tensor product
    of two Higgs bundles on $X_{\reg}$ that are stable with respect to $H$.
  \end{enumerate}
  Then, $(ℱ_{X_{\reg}}, θ_{ℱ_{X_{\reg}}})$ is polystable with respect to $H$.
  In particular, if $(ℰ_{X_{\reg}},θ_{ℰ_{X_{\reg}}})$ is a Higgs bundle on
  $X_{\reg}$ that is stable with respect to $H$, then
  $\sEnd(ℰ_{X_{\reg}}, θ_{ℰ_{X_{\reg}}})$ is polystable with respect to $H$.
\end{lem}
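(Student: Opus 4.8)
The plan is to reduce the statement to the compact case, where polystability of Higgs bundles with numerically flat underlying sheaves (resp.\ tensor products of stable Higgs sheaves) is known, and then transport the conclusion back to $X_{\reg}$ via Fact~\ref{lem:HQ2} and a pull-back to a maximally quasi-étale cover. More precisely, in case (1) I would first invoke Corollary~\ref{cor:p2} together with Proposition~\ref{prop:p1}: choosing a maximally quasi-étale cover $\gamma : Y \to X$ (Galois with group $G$), the reflexive extension $\mathcal{E}_X$ of $\mathcal{F}_{X_{\reg}}$ pulls back to a \emph{locally free} Higgs sheaf $(\mathcal{F}_Y, \theta_{\mathcal{F}_Y}) \in \Higgs_Y$ with vanishing Chern classes, and the original Higgs bundle is recovered as $\delta_*(\mathcal{F}_{Y^\circ}, \theta_{\mathcal{F}_{Y^\circ}})^G$. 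By the nonabelian Hodge correspondence for klt spaces, \cite[Thm.~3.4]{GKPT17}, $(\mathcal{F}_Y, \theta_{\mathcal{F}_Y})$ corresponds to a semisimple local system on $Y$; semisimplicity of the local system translates into polystability of the Higgs bundle (a Higgs subsheaf destabilising it would, by the correspondence and the standard slope-zero arguments on klt spaces, split off as a direct summand). So $(\mathcal{F}_Y, \theta_{\mathcal{F}_Y})$ is polystable with respect to $\gamma^* H$, hence so is its restriction to $Y_{\reg}$ by Fact~\ref{lem:HQ2}, and a standard averaging/descent argument for the étale cover $\delta : Y^\circ \to X^\circ$ (using that slopes are multiplied by $\deg\delta$ and that a $G$-invariant polystable object descends to a polystable object) yields polystability of $(\mathcal{F}_{X_{\reg}}, \theta_{\mathcal{F}_{X_{\reg}}})$ with respect to $H$.

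For case (2), I would argue directly on $X_{\reg}$, or rather on a maximally quasi-étale cover where everything becomes locally free and the slopes of both tensor factors are zero. Write $(\mathcal{F}_{X_{\reg}}, \theta) = (\mathcal{A}, \theta_{\mathcal{A}}) \otimes (\mathcal{B}, \theta_{\mathcal{B}})$ with both factors stable with respect to $H$. Each stable Higgs bundle on $X_{\reg}$, via Fact~\ref{lem:HQ2} and the correspondence of \cite[Thm.~3.4]{GKPT17} applied after pulling back to a maximally quasi-étale cover, is the Higgs bundle associated to an \emph{irreducible} flat bundle (once its Chern characters against $[H]$ vanish in the relevant degrees; note that polystability with vanishing discriminant is what the correspondence needs, and that stability forces this on the cover — this is essentially the content of the previous results in the excerpt). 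The tensor product of two irreducible local systems is semisimple, hence decomposes as a direct sum of irreducibles; transporting back through the correspondence and Fact~\ref{lem:HQ2} gives that the tensor product Higgs bundle is a direct sum of stable Higgs bundles of the same slope, i.e.\ polystable. The final assertion about $\sEnd(\mathcal{E}_{X_{\reg}}, \theta_{\mathcal{E}_{X_{\reg}}})$ then follows immediately: $\sEnd(\mathcal{E}_{X_{\reg}}) \cong \mathcal{E}_{X_{\reg}} \otimes \mathcal{E}_{X_{\reg}}^\vee$ (compatibly with the induced Higgs field), the dual of a stable Higgs bundle is stable of the opposite slope, so we are in case (2).

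The main obstacle I anticipate is not any single deep input but rather the bookkeeping needed to move stability/polystability rigorously across the quasi-étale cover and between the ``on $X$'' and ``on $X_{\reg}$'' formulations: one must check that $G$-invariant Higgs subsheaves on $Y^\circ$ correspond exactly to Higgs subsheaves on $X^\circ$, that slopes behave multiplicatively under the finite étale map $\delta$, and that the reflexive-extension operations commute with everything in sight — all of which is available from \cite[Sect.~5]{GKPT15}, Fact~\ref{lem:HQ2}, and Proposition~\ref{prop:p1}, but needs to be assembled carefully. A secondary subtlety is verifying that a stable Higgs bundle on $X_{\reg}$ really does pull back (on a maximally quasi-étale cover) to a Higgs bundle with vanishing Chern classes so that the nonabelian Hodge correspondence \cite[Thm.~3.4]{GKPT17} applies and produces an irreducible flat bundle; here one would want to note that case (1) and case (2) will typically be applied in the excerpt precisely in situations where this vanishing is part of the hypotheses, or derive it from the hypothesis in the ambient argument — but for a clean standalone proof of the lemma, the cleanest route is to observe that in case (2) stability of the factors together with the hypotheses in force (the ambient Theorem~\ref{mainthm} setting) supplies the needed Chern-class vanishing, and that semisimplicity of the tensor product of irreducible local systems is the purely group-theoretic fact that makes the argument run.
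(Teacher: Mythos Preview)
Your approach has a genuine gap in case~(2). The lemma, as stated, places no Chern-class hypothesis on the stable factors $(\mathcal{A},\theta_{\mathcal{A}})$ and $(\mathcal{B},\theta_{\mathcal{B}})$, and $X$ is only assumed normal and projective, not klt. Consequently neither the maximally quasi-étale cover of \cite[Thm.~1.5]{GKP13} nor the nonabelian Hodge correspondence \cite[Thm.~3.4]{GKPT17} is available to you, and your reduction to ``tensor product of irreducible local systems is semisimple'' never gets off the ground. You recognise this and propose to borrow the Chern-class vanishing from the ambient application---but then you are not proving the lemma, only the particular instance the paper happens to use later. In the $\sEnd$ application, for example, the stable bundle $(\mathcal{E}_{X_{\reg}},\theta_{\mathcal{E}_{X_{\reg}}})$ has no reason to satisfy $\what{ch}_1\cdot[H]^{n-1}=\what{ch}_2\cdot[H]^{n-2}=0$; indeed, when the lemma is invoked in Step~2 of Section~\ref{sec:potbrv} the factor has $\what{c}_1=[K_X]\ne 0$.

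The paper proceeds by an entirely different, more elementary route that sidesteps all of this. Semistability is first reduced to the case of a smooth projective curve: in case~(1) via Remark~\ref{rem:restriction} (restriction of a harmonic structure is harmonic), in case~(2) via the restriction theorem \cite[Thm.~6.1]{GKPT17}; on curves both statements are classical (Hitchin--Simpson, respectively the tensor product of Hermitian--Einstein metrics). Polystability is then shown by induction on $\dim X$: given a saturated Higgs-invariant subsheaf $\mathcal{A}_{X_{\reg}}$ of the same slope, one restricts to a general $Y\in|mH|$ for $m\gg 0$, uses the inductive hypothesis to produce a Higgs-equivariant projection onto $\mathcal{A}_{X_{\reg}}|_{Y_{\reg}}$, and then lifts this projection back to $X_{\reg}$ using that the restriction map on $\Hom$-spaces is bijective (a Bertini-type statement, \cite[Prop.~7.3]{GKPT17}). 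This argument never leaves $X_{\reg}$, uses no covers, and requires no Chern-class vanishing.
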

\begin{proof}
  To begin, we remark that $(ℱ_{X_{\reg}}, θ_{ℱ_{X_{\reg}}})$ is semistable.  In
  fact, Remark~\ref{rem:restriction} and the restriction theorem for semistable
  Higgs sheaves on $X_{\reg}$, \cite[Thm.~6.1]{GKPT17}, respectively, allows us
  to restrict ourselves to the case where $X = X_{\reg}$ is a smooth projective
  curve.  There, the result is classically known in either of the two cases.

  The proof of polystability proceeds by induction on the dimension on $X$.  If
  $\dim X = 1$, then $X$ is a smooth projective curve and the result is
  classically known in either case.  As for the inductive step, assume that the
  result was known for all varieties of dimension less than $\dim X$, and assume
  that there exists a saturated, Higgs-invariant subsheaf
  $𝒜_{X_{\reg}} ⊊ ℱ_{X_{\reg}}$ whose slope equals that of $ℱ_{X_{\reg}}$ and
  that is stable with respect to $H$.  We need to show that $𝒜_{X_{\reg}}$ is a
  direct summand.  More precisely, we need to find a morphism of Higgs sheaves,
  $ℱ_{X_{\reg}} → 𝒜_{X_{\reg}}$, that is a projection onto $𝒜_{X_{\reg}}$.

  Now, if $m ≫ 0$ is sufficiently large and $Y ∈ |m·H|$ is sufficiently general,
  then $Y$ is normal, $Y_{\reg} = Y ∩ X_{\reg}$ and the following will hold.
  \begin{enumerate}
  \item\label{il:4-31-1} The subsheaf
    $𝒜_{X_{\reg}}|_{Y_{\reg}} ⊊ ℱ_{X_{\reg}}|_{Y_{\reg}}$ Higgs-invariant and
    stable.  This is the Restriction Theorem for stable Higgs sheaves on
    $X_{\reg}$, \cite[Thm.~6.1]{GKPT17}.

  \item\label{il:4-31-2} The restriction
    $\Hom(ℱ_{X_{\reg}}, 𝒜_{X_{\reg}} ) → \Hom(ℱ_{X_{\reg}}|_{Y_{\reg}},
    𝒜_{X_{\reg}}|_{Y_{\reg}} )$ is bijective.  This follows from standard
    arguments, compare \cite[Prop.~7.3 and proof]{GKPT17}.
  \end{enumerate}
  Item~\ref{il:4-31-1} together with the induction hypothesis show that there
  exists a projection map $ℱ_{X_{\reg}}|_{Y_{\reg}} → 𝒜_{X_{\reg}}|_{Y_{\reg}}$
  respecting Higgs fields.  Item~\ref{il:4-31-2} implies that this map extends
  to a projection $ℱ_{X_{\reg}} → 𝒜_{X_{\reg}}$, presenting $𝒜_{X_{\reg}}$ as a
  direct summand of $ℱ_{X_{\reg}}$.  A variant of \ref{il:4-31-2} that we leave
  to the reader, using morphisms of Higgs-sheaves instead of sheaf morphisms,
  shows that for $m$ sufficiently large, the projection
  $ℱ_{X_{\reg}} → 𝒜_{X_{\reg}}$ is in fact a morphism of Higgs sheaves,
  cf.~\cite[Lem.~5]{MR1157844} and \cite[Lem.~3.4]{MR2310103}.
\end{proof}

%
% Do not edit the following line.  The text is automatically updated by
% subversion.
%
\svnid{$Id: 05-harmonicExistence.tex 1266 2019-02-26 19:07:11Z greb $}

\section{Existence of harmonic structures}
\approvals{Behrouz & yes \\ Daniel & yes \\ Stefan & yes \\ Thomas & yes}%
\label{sec:flatness}

The following is the main result in Part~\ref{part:0} of the present paper.  It
generalises earlier results obtained in \cite{MR84, SBW94, LT14, GKP13}.

\begin{thm}[Existence of harmonic structures]\label{thm:Qflat1new}
  Let $X$ be a projective klt space of dimension $n ≥ 2$.  Let $H ∈ \Div(X)$ be
  ample and use $H$ to equip $X_{\reg}$ with a Kähler metric.  Let
  $(ℰ_{X_{\reg}}, θ_{ℰ_{X_{\reg}}})$ be a reflexive Higgs sheaf on $X_{\reg}$.
  Denote the reflexive extension of $ℰ_{X_{\reg}}$ to $X$ by $ℰ_X$.  Then, the
  following statements are equivalent.
  \begin{enumerate}
  \item\label{il:5-1-1} The Higgs sheaf $(ℰ_{X_{\reg}}, θ_{ℰ_{X_{\reg}}})$ is
    (poly)stable with respect to $H$ and the $ℚ$-Chern characters satisfy
    \begin{equation*}
      \what{ch}_1(ℰ_X)·[H]^{n-1} = 0 \quad\text{and}\quad \what{ch}_2(ℰ_X)·[H]^{n
        -2} = 0.
    \end{equation*}
    
  \item\label{il:5-1-2} The sheaf $ℰ_{X_{\reg}}$ is locally free and
    $(ℰ_{X_{\reg}}, θ_{ℰ_{X_{\reg}}})$ is induced by a tame, purely imaginary
    harmonic bundle whose associated flat bundle is (semi)simple.
  \end{enumerate}
\end{thm}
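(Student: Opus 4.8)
The plan is to prove the two implications separately. The implication $(\ref{il:5-1-2})⇒(\ref{il:5-1-1})$ is soft: under the hypotheses of $(\ref{il:5-1-2})$ one has $(ℰ_{X_{\reg}},θ_{ℰ_{X_{\reg}}})∈\TPIH_{X_{\reg}}$, so (poly)stability with respect to $H$ is immediate from Lemma~\ref{lem:stvbn} and Fact~\ref{lem:HQ2}; in the stable/simple case stability holds because a non-stable polystable Higgs bundle decomposes non-trivially, which by the harmonic-bundle structure (cf.\ Lemma~\ref{lem:flshtpihb}) forces a corresponding decomposition of the flat bundle, contradicting simplicity. The numerical conditions are obtained by passing to a maximally quasi-étale cover $γ:Y→X$: Proposition~\ref{prop:p1} shows that $γ^{[*]}ℰ_X$ is locally free with all Chern classes zero, and the projection formula for the finite map $γ$ applied to $ℚ$-Chern characters then gives $\what{ch}_1(ℰ_X)·[H]^{n-1}=\what{ch}_2(ℰ_X)·[H]^{n-2}=0$.

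The substantial direction $(\ref{il:5-1-1})⇒(\ref{il:5-1-2})$ I would prove by induction on $n≥2$. For the step $n>2$, pick $m≫0$ and a general $D∈|m·H|$ as in Corollary~\ref{cor:restr}, so that $D$ is a klt space with $D_{\reg}=D∩X_{\reg}$; let $ℰ_D$ be the reflexive extension of $ℰ_{X_{\reg}}|_{D_{\reg}}$ to $D$. The (poly)stability hypothesis restricts to $D_{\reg}$ by the restriction theorem \cite[Thm.~6.1]{GKPT17}, and the projection formula together with the compatibility of $ℚ$-Chern classes with restriction to a general hypersurface (\cite[Sect.~3]{GKPT15}, \cite{GKT16}) yields $\what{ch}_i(ℰ_D)·[H|_D]^{(n-1)-i}=m·\what{ch}_i(ℰ_X)·[H]^{n-i}=0$ for $i=1,2$. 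Hence $D$ satisfies $(\ref{il:5-1-1})$, the induction hypothesis gives $(ℰ_{X_{\reg}},θ_{ℰ_{X_{\reg}}})|_{D_{\reg}}∈\TPIH_{D_{\reg}}$, and Corollary~\ref{cor:restr} upgrades this to $(ℰ_{X_{\reg}},θ_{ℰ_{X_{\reg}}})∈\TPIH_{X_{\reg}}$; since the flat bundle on $X_{\reg}$ is the unique extension — via the Lefschetz isomorphism $π_1(D_{\reg})≅π_1(X_{\reg})$ — of the one on $D_{\reg}$, it is (semi)simple together with it.

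It remains to treat the base case $n=2$, where $X=S$ is a projective klt — hence quotient-singular — surface. Here I would equip $S$ with the quasi-étale $ℚ$-structure of Construction~\ref{construction:Q}, pass to the global Cohen--Macaulay cover $γ:\what S→S$ (Galois with group $G$) and then to a $G$-equivariant resolution $π:\wtilde S→\what S$; Constructions~\ref{cons:Q-shf} and \ref{cons:pbfq} then produce a $G$-invariant Higgs \emph{bundle} $(ℰ_{\wtilde S},θ_{ℰ_{\wtilde S}})$ on the smooth projective surface $\wtilde S$ — locally free because the reflexive pull-back $ℰ_{\what S}≅γ^{[*]}ℰ_S$ is already locally free on the Cohen--Macaulay cover — which over the preimage of $S_{\reg}$ is the pull-back of $(ℰ_{S_{\reg}},θ_{ℰ_{S_{\reg}}})$ along the étale $G$-cover $δ:\wtilde S°→S_{\reg}$, where $\wtilde S°:=π^{-1}γ^{-1}(S_{\reg})$. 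Polystability of $(ℰ_{S_{\reg}},θ_{ℰ_{S_{\reg}}})$ (equivalently, by Fact~\ref{lem:HQ2}, of its reflexive extension) makes $(ℰ_{\wtilde S},θ_{ℰ_{\wtilde S}})$ semistable with respect to the nef and big class $\wtilde H:=π^*γ^*H$, and — because the Cohen--Macaulay cover keeps the sheaf locally free, so that the resolution contributes no exceptional correction — the construction of $ℚ$-Chern classes on surfaces with quotient singularities (\cite[Sect.~3]{GKPT15}, after Mumford \cite{MR717614}) turns the hypotheses of $(\ref{il:5-1-1})$ into $c_1(ℰ_{\wtilde S})·[\wtilde H]=0$ and $\int_{\wtilde S}ch_2(ℰ_{\wtilde S})=0$. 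Simpson's nonabelian Hodge correspondence on $\wtilde S$ (\cite{MR1179076}, or \cite[Thm.~3.4]{GKPT17}) then endows $(ℰ_{\wtilde S},θ_{ℰ_{\wtilde S}})$ with a harmonic metric $h$ whose associated flat connection $∇$ corresponds to a semisimple local system; since $∇$ is defined on all of the projective surface $\wtilde S$ it has zero residues along the exceptional divisor, so the induced harmonic bundle on $\wtilde S°$ is tame and purely imaginary, and by the uniqueness in Theorem~\ref{thm:JZM} the metric $h$ is $G$-invariant. Descending $∇$ along $δ$ by Proposition~\ref{prop:conndescent}, and $h$ (hence the holomorphic structure and Higgs field, which descend to those of $(ℰ_{S_{\reg}},θ_{ℰ_{S_{\reg}}})$) along the étale cover $δ$, one obtains a harmonic bundle on $S_{\reg}$ with underlying Higgs bundle $(ℰ_{S_{\reg}},θ_{ℰ_{S_{\reg}}})$ and semisimple flat bundle, whose pull-back along $δ$ is tame and purely imaginary; by the evident converse of \cite[Lem.~25.29]{MR2283665} for étale covers it is therefore itself tame and purely imaginary, so $(ℰ_{S_{\reg}},θ_{ℰ_{S_{\reg}}})∈\TPIH_{S_{\reg}}$ — with simple flat bundle in the stable case — completing the induction.

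The step I expect to be the main obstacle is the base case: one must verify that the $ℚ$-Chern-class hypotheses on $S$ translate \emph{exactly} into the Chern-number equalities on $\wtilde S$, the point being that the Cohen--Macaulay cover keeps the relevant sheaf locally free so that the resolution introduces no exceptional corrections — which is precisely where the quotient-singularity structure of klt surfaces and the construction in \cite[Sect.~3]{GKPT15} enter — and one must carry out the descent of the flat harmonic structure from $\wtilde S$ to $S_{\reg}$ while checking that tameness and pure-imaginariness are preserved. A secondary technical point is that $\wtilde H$ is only nef and big, so Simpson's correspondence must be applied in that slightly more general setting.
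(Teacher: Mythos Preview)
Your overall architecture matches the paper's: the easy direction via Lemma~\ref{lem:stvbn} and Proposition~\ref{prop:p1}, and the hard direction by reduction to surfaces followed by passing through the $ℚ$-structure, the Cohen--Macaulay cover $γ:\what S→S$, and a $G$-equivariant resolution. The gap is in your descent step.

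You assert that $δ:\wtilde S°→S_{\reg}$ is an étale $G$-cover, and use this to descend the harmonic metric $h$ directly. But the global Cohen--Macaulay cover $γ$ is in general \emph{not} quasi-étale: it may branch over the smooth locus $S_{\reg}$. The paper's proof flags exactly this point (Step~5): because of this branching, the stratified $\cC^∞$-structure on the quotient need not coincide with the analytic $\cC^∞$-structure on $S_{\reg}$, and a $G$-invariant smooth metric upstairs need not descend to a smooth metric downstairs. So your descent of $h$ along $δ$ breaks, and with it the claim that the resulting harmonic bundle on $S_{\reg}$ has the given holomorphic structure and Higgs field. The paper's fix is to descend \emph{only} the flat connection via Proposition~\ref{prop:conndescent} (which is designed for ramified Galois covers), then to show --- and this requires genuine work, Claims~\ref{claim:kl1} and \ref{claim:kl2} --- that the descended flat bundle is simple, apply Jost--Zuo (Theorem~\ref{thm:JZM}) on $X°$ to produce a \emph{new} tame purely imaginary harmonic structure there, and finally compare the two harmonic structures on $\wtilde X°$ via the uniqueness in Theorem~\ref{thm:JZM} to identify the new holomorphic structure and Higgs field with the original ones.

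Two smaller points. First, your argument for $G$-invariance of $h$ via uniqueness is shaky: Theorem~\ref{thm:JZM} gives uniqueness only up to flat automorphisms, so $g^*h$ and $h$ could differ by a nontrivial flat automorphism; the paper instead invokes Simpson's $G$-equivariant Hermitian--Yang--Mills existence theorem \cite[Thm.~1]{MR944577} directly (Claim~\ref{claim:xza1}), which outputs a $G$-invariant metric. Second, the paper avoids your worry about $\wtilde H$ being only nef and big by taking instead a $G$-invariant \emph{ample} divisor on $\wtilde X$ (Claim~\ref{claim:ccl1}); the $ℚ$-Chern vanishings on $X$ translate to honest Chern-class vanishings on $\wtilde X$ via \cite[Thm.~3.13]{GKPT15}, the Hodge index theorem, and Bogomolov--Gieseker.
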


\subsection{Proof of Theorem~\ref*{thm:Qflat1new}}
\approvals{Behrouz & yes \\ Daniel & yes \\ Stefan & yes \\ Thomas & yes}%
\label{ssec:poticwXs}

The two implications are proven separately.

\subsection*{Implication \ref{il:5-1-1} $⇒$ \ref{il:5-1-2}}
\label{subsubsect:first_direction}

Suppose we already know that $(ℰ_{X_{\reg}}, θ_{ℰ_{X_{\reg}}})$ admits a tame
and purely imaginary harmonic bundle structure $𝔼_{X_{\reg}}$.  Then the induced
flat bundle $(E_{X_{\reg}}, ∇_{𝔼_{X_{\reg}}})$ is always semisimple, see
Remark~\ref{rem:tame_semisimple}.  If $(ℰ_{X_{\reg}}, θ_{ℰ_{X_{\reg}}})$ is even
stable, then Lemma~\ref{lem:flshtpihb} implies that
$(E_{X_{\reg}}, ∇_{𝔼_{X_{\reg}}})$ is simple.  It therefore remains to establish
a tame, purely imaginary harmonic bundle structure.  The proof is rather long,
and therefore subdivided into six steps.

\subsection*{Implication \ref{il:5-1-1} $⇒$ \ref{il:5-1-2}, Step~1: Reduction to stable sheaves on surfaces}
\approvals{Behrouz & yes \\ Daniel & yes \\ Stefan & yes \\ Thomas & yes}%

Standard arguments involving the Bogomolov-Gieseker inequality show that it
suffices to consider the stable case only.  We refer to \cite[Step~2 in proof of
Thm.~6.2]{GKP15} for details.

\begin{asswlog}\label{asswlog:1}
  The Higgs sheaf $(ℰ_{X_{\reg}}, θ_{ℰ_{X_{\reg}}})$ is stable with respect to
  $H$.
\end{asswlog}
\CounterStep

Choosing a sufficiently increasing sequence of numbers
$0 ≪ m_1 ≪ m_2 ≪ ⋯ ≪ m_{n-2}$ as well as a sufficiently general tuple of
hyperplanes,
$$
(D_1, …, D_{n-2}) ∈ |m_1·H| ⨯ ⋯ ⨯ |m_{n-2}·H|,
$$
the following will hold.
\begin{enumerate}
\item\label{il:t1} The intersection $S := D_1 ∩ ⋯ ∩ D_{n-2}$ is irreducible and
  normal.  Moreover, $S$ is a klt space and $S_{\reg} = S ∩ X_{\reg}$.  This is
  Seidenberg's theorem \cite[Thm.~1.7.1]{BS95} and \cite[Lem.~5.17]{KM98}.
  
\item\label{il:t2} The restricted sheaf $ℰ_S := ℰ_X|_S$ is reflexive, hence
  locally free on $S_{\reg}$, and satisfies $\what{ch}_2(ℰ_S) = 0$ as well as
  $\what{c}_1(ℰ_Y)·[H|_S] = 0$.  This is \cite[Cor.~1.1.14]{MR2665168} and
  \cite[Thm.~3.13]{GKPT15}.

\item\label{il:t3} The locally free Higgs sheaf
  $(ℰ_{S_{\reg}}, θ_{ℰ_{S_{\reg}}}) := (ℰ_{X_{\reg}},
  θ_{ℰ_{X_{\reg}}})|_{S_{\reg}}$ is stable with respect to $H|_S$.  This is the
  Restriction Theorem \cite[Thm.~6.1]{GKPT17}.
  
\item\label{il:t4} The reflexive Higgs sheaf $(ℰ_{X_{\reg}}, θ_{ℰ_{X_{\reg}}})$
  is locally free and admits a tame and purely imaginary harmonic bundle
  structure if and only if the locally free Higgs sheaf
  $(ℰ_{X_{\reg}}, θ_{ℰ_{X_{\reg}}})|_{S_{\reg}}$ admits a tame and purely
  imaginary harmonic bundle structure.  This is Corollary~\ref{cor:restr}.
\end{enumerate}

Items~\ref{il:t1}--\ref{il:t3} imply that the surface $S$ and the Higgs sheaf
$(ℰ_{X_{\reg}}, θ_{ℰ_{X_{\reg}}})|_{S_{\reg}}$ reproduce the assumptions made in
Theorem~\ref{thm:Qflat1new}.  Item~\ref{il:t4} implies that it suffices to show
that the locally free Higgs sheaf $(ℰ_{X_{\reg}}, θ_{ℰ_{X_{\reg}}})|_{S_{\reg}}$
admits a tame and purely imaginary harmonic bundle structure.  Replacing $X$ by
$S$ we can (and will) therefore assume the following.

\begin{asswlog}\label{asswlog:dim2}
  The dimension of $X$ is equal to two.
\end{asswlog}

\subsection*{Implication \ref{il:5-1-1} $⇒$ \ref{il:5-1-2}, Step~2: The $ℚ$-structure}
\approvals{Behrouz & yes \\ Daniel & yes \\ Stefan & yes \\ Thomas & yes}%

As we have seen in \ref{il:t2} above, assumption~\ref{asswlog:dim2} immediately
implies that the reflexive sheaf $ℰ_{X_{\reg}}$ is locally free.  In a similar
vein, since $X$ is a klt \emph{surface}, we obtain immediately that $X$ has
quotient singularities.  We may therefore equip $X$ with a quasi-étale
$ℚ$-structure and choose a global Cohen-Macaulay cover with Galois group $G$
allowing to compute $ℚ$-Chern classes as in \cite[Thm.~3.13]{GKPT15}, and a
$G$-equivariant, strong log-resolution of singularities as follows,
$$
\xymatrix{
  \wtilde{X} \ar[rrrr]_{\txt{\scriptsize $π$, $G$-equivariant, strong\\\scriptsize log-resolution of singularities}} \ar@/^4mm/[rrrrrrrr]^{ψ} &&&&
  \what{X} \ar[rrrr]_{\txt{\scriptsize $γ$, global Cohen-Macaulay cover}} &&&& X.
}
$$
The locally free Higgs $G$-bundle on $\wtilde{X}$ obtained in
Construction~\ref{cons:pbfq} will be denoted by
$(ℰ_{\wtilde{X}}, θ_{ℰ_{\wtilde{X}}})$.  The following immediate consequence of
Assumption~\ref{asswlog:1} will be used later.

\begin{claim}[Stability and Chern classes of $ℰ_{\wtilde{X}}$]\label{claim:ccl1}
  The Higgs $G$-bundle $(ℰ_{\wtilde{X}}, θ_{ℰ_{\wtilde{X}}})$ is $G$-stable with
  respect to a $G$-invariant ample divisor $\wtilde{H} ∈ \Div(\wtilde{X})$.  The
  Chern classes $c_i(ℰ_{\wtilde{X}}) ∈ H^{2i} \bigl(\wtilde{X},\, ℝ\bigr)$
  vanish.
\end{claim}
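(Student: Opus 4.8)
The plan is to transport the hypotheses of Assumption~\ref{asswlog:1} (stability of $(ℰ_{X_{\reg}},θ_{ℰ_{X_{\reg}}})$ with respect to $H$) and of the Chern-class vanishing from item~\ref{il:t2} through the two-step tower $ψ : \wtilde{X} → \what{X} → X$. First I would treat the Chern classes. On the Cohen--Macaulay cover $\what{X}$ the sheaf $ℰ_{\what{X}} ≅ γ^{[*]} ℰ_X$ (Remark~\ref{fact:u1}), and by the very construction of $ℚ$-Chern classes via global Cohen--Macaulay covers \cite[Thm.~3.13]{GKPT15}, the hypotheses $\what{ch}_1(ℰ_X)·[H] = 0$ and $\what{ch}_2(ℰ_X) = 0$ translate into $c_1(ℰ_{\what X})\cdot[\gamma^*H]=0$ and $c_2(ℰ_{\what X})=0$ on $\what X$, together with $c_1(ℰ_{\what X})^2=0$. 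Pulling back along $π$ and using that $π^*$ is injective on the relevant cohomology of the smooth surface $\wtilde X$ (and that $ℰ_{\wtilde X} = π^* ℰ_{\what X}$ by Construction~\ref{cons:pbfq}), one gets $c_i(ℰ_{\wtilde X})\cdot[\text{ample}]=0$ for $i=1,2$ and $c_1^2=0$. To upgrade this to the actual vanishing $c_i(ℰ_{\wtilde X}) = 0$ in $H^{2i}(\wtilde X, ℝ)$, I would invoke stability (established next) together with the Bogomolov--Gieseker inequality and the Hodge index theorem on the surface $\wtilde X$: a $\wtilde H$-stable bundle with $c_1\cdot \wtilde H = 0$ has $c_1 = 0$ by Hodge index once $c_1^2 = 0$ and $c_1\cdot\wtilde H=0$ (a class orthogonal to an ample class with zero self-intersection is zero), and then $\Delta(ℰ_{\wtilde X})\cdot\wtilde H \ge 0$ forces $c_2\cdot\wtilde H = 0$, and a second application of Hodge index (now $c_2$ is a class on a surface, so it is just a number times the point class — actually $c_2 \in H^4$ is automatically a multiple of the fundamental class, so $c_2\cdot\wtilde H=0$ on a surface already gives $c_2=0$). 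Thus the Chern-class part reduces to the stability part.

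For $G$-stability: I would first fix a $G$-invariant ample divisor $\wtilde H$ on $\wtilde X$, which exists by averaging the pullback of an ample divisor over the finite group $G$ (or by taking $ψ^*H$ directly, which is already $G$-invariant, and perturbing if needed to make it ample rather than just nef and big — more cleanly, one chooses $\wtilde H$ of the form $ψ^* (mH) - (\text{small }π\text{-exceptional correction})$ that is ample and $G$-invariant, as is standard for log-resolutions). Then I would argue that $G$-stability of $(ℰ_{\wtilde X}, θ_{ℰ_{\wtilde X}})$ with respect to $\wtilde H$ follows from stability of $(ℰ_{X_{\reg}}, θ_{ℰ_{X_{\reg}}})$ with respect to $H$. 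The mechanism: a $G$-invariant saturated Higgs-invariant subsheaf $ℱ_{\wtilde X} \subsetneq ℰ_{\wtilde X}$ restricts, away from the exceptional locus and over $X_{\reg}$, to a Higgs-invariant subsheaf of $ℰ_{X_{\reg}}$ (using that $ψ$ is quasi-étale over $X_{\reg}$, so slopes computed with $\wtilde H$ and with $H$ are proportional, and Higgs-invariance descends since the Higgs field of $ℰ_{\wtilde X}$ agrees with the pullback of $θ_{ℰ_{X_{\reg}}}$ there by Construction~\ref{cons:pbfq}). Since $π$-exceptional divisors do not affect intersection numbers against $ψ^*H$ and the degree of a subsheaf is insensitive to modification in codimension $\ge 1$ on the source that maps into the singular/exceptional locus, the slope inequality for $ℱ_{X_{\reg}}$ forces the slope inequality for $ℱ_{\wtilde X}$. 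This is essentially the content of \cite[Sect.~5]{GKPT15} on comparing stability through $ℚ$-structures, and I would cite the relevant comparison lemma there (e.g.\ the compatibility of slope-stability with the global Cohen--Macaulay cover and with $G$-equivariant resolution) rather than redo it.

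The main obstacle I expect is the bookkeeping around the $π$-exceptional divisors: one must check carefully that passing from the $ℚ$-stable (equivalently $G$-stable on $\what X$) Higgs sheaf to its pullback on the resolution $\wtilde X$ does not destroy stability, because in general pulling back a stable sheaf along a birational morphism need not preserve stability unless one works with the correct (slightly non-ample, or carefully perturbed) polarisation and uses that the exceptional locus has codimension $\ge 2$ after the Cohen--Macaulay cover has resolved the quotient singularities in codimension one — but here $X$ is a surface, so the exceptional locus is a divisor, and the saving grace is precisely that $θ_{ℰ_{\wtilde X}}$ and $c_i(ℰ_{\wtilde X})$ are defined compatibly through Construction~\ref{cons:pbfq} and \cite[Lem.~5.17]{GKPT15}, so all the needed comparisons are already packaged there. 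Concretely, the cleanest route is: (i) note $ℰ_{\wtilde X} = π^* ℰ_{\what X}$ so its Chern classes are pulled back from $\what X$, where they are numerically trivial against $\gamma^*H$ by \cite[Thm.~3.13]{GKPT15}; (ii) note $G$-stability on $\wtilde X$ w.r.t.\ $\wtilde H$ is equivalent to $G$-stability on $\what X$ w.r.t.\ $\gamma^* H$ (push-forward/pull-back of saturated subsheaves along $π$, using that $π$ is a resolution so $π_*\sO_{\wtilde X} = \sO_{\what X}$ and exceptional divisors contribute nothing to degrees against $\gamma^*H$), and the latter is equivalent to stability of $(ℰ_{X_{\reg}}, θ_{ℰ_{X_{\reg}}})$ w.r.t.\ $H$ by Fact~\ref{lem:HQ2} together with \cite[Sect.~5.6]{GKPT15}; (iii) deduce $c_1(ℰ_{\wtilde X}) = 0$ and $c_2(ℰ_{\wtilde X}) = 0$ by Hodge index and Bogomolov--Gieseker on the smooth surface $\wtilde X$ as above.
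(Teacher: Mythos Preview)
Your approach is essentially the paper's: cite \cite[Prop.~6.2]{GKPT15} (or its ingredients) for $G$-stability via the $ℚ$-structure, pull the Chern-number vanishings through $ψ$ using \cite[Thm.~3.13]{GKPT15} and functoriality, and then upgrade to $c_i(ℰ_{\wtilde X})=0$ on the smooth surface $\wtilde X$ via the Hodge index theorem and the Bogomolov--Gieseker inequality.

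One genuine step is glossed over. You write ``a $\wtilde H$-stable bundle'' when invoking Bogomolov--Gieseker, but what you have established is only $G$-\emph{stability}; $G$-stability does not tautologically imply ordinary stability, and BG needs at least ordinary semistability. The paper fills this by observing that the maximally destabilising Higgs subsheaf of $(ℰ_{\wtilde X},θ_{ℰ_{\wtilde X}})$ with respect to $\wtilde H$ is unique, hence automatically $G$-invariant, so $G$-stability forces ordinary \emph{semistability}. Without this remark your appeal to BG is unjustified. Relatedly, your Chern-class bookkeeping is slightly off: from $\what{ch}_2(ℰ_X)=0$ you only get $c_1(ℰ_{\wtilde X})^2 = 2c_2(ℰ_{\wtilde X})$, not $c_1^2=0$ and $c_2=0$ separately; it is precisely the combination of BG (giving $c_1^2 \ge 0$) with Hodge index (giving $c_1^2 \le 0$ once $c_1\cdot\wtilde H=0$) that yields $c_1^2=0$, hence $c_1=0$ numerically, and then $c_2=0$.
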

\begin{proof}[Proof of Claim~\ref{claim:ccl1}]
  The first claim is proven as in the proof of \cite[Prop.~6.2]{GKPT15}, using
  the locally free Higgs $G$-sheaf $(ℰ_{\what{X}}, θ_{ℰ_{\what{X}}})$ on
  $\what{X}$ obtained in Construction~\ref{cons:Q-shf}.  For the second claim,
  observe that the maximally destabilising subsheaf of the Higgs
  bundle\footnote{Here, we view $(ℰ_{\wtilde{X}}, θ_{ℰ_{\wtilde{X}}})$ as a
    Higgs-bundle without its structure as a $G$-sheaf}
  $(ℰ_{\wtilde{X}}, θ_{ℰ_{\wtilde{X}}})$ with respect to $\wtilde{H}$ is
  automatically $G$-invariant.  In particular, it follows from $G$-stability
  that $(ℰ_{\wtilde{X}}, θ_{ℰ_{\wtilde{X}}})$ is at least semistable with
  respect to $\wtilde{H}$.  Moreover, from \ref{il:t2}, from
  \cite[Thm.~3.13]{GKPT15}, and from the functorial properties of Chern classes
  \cite[Thm.~3.2(d)]{Fulton98} we conclude that
  $c_1(ℰ_{\wtilde{X}}) · \wtilde{H} = ch_2(ℰ_{\wtilde{X}})= 0$.  Vanishing of
  the $c_i$ hence follows from the Hodge index theorem and the
  Bogomolov-Gieseker inequality as in the first paragraph of
  \cite[Sect.~6.3]{GKP15}.  \qedhere~(Claim~\ref{claim:ccl1})
\end{proof}

For the remainder of the proof, we fix one $G$-invariant ample divisor
$\wtilde{H} ∈ \Div(\wtilde{X})$ as in Claim~\ref{claim:ccl1} and use this
divisor to equip $\wtilde{X}$ with a $G$-invariant Kähler metric
$ω_{\wtilde H}$.  We denote the holomorphic vector bundle associated with
$ℰ_{X_{\reg}}$ by $(E_{X_{\reg}}, \bar{∂}_{E_{X_{\reg}}})$.  We use similar
notation also for other bundles, including
$(E_{\wtilde{X}}, \bar{∂}_{E_{\wtilde{X}}})$.

\subsubsection*{Open sets}
\approvals{Behrouz & yes \\ Daniel & yes \\ Stefan & yes \\ Thomas & yes}%

Throughout the proof, we consider the big, open subset
$X° := X_{\reg} ∖ γ(\what{X}_{\sing})$ of $X$.  Preimages and restricted
morphisms are written as
$$
\xymatrix{ %
  \wtilde{X}° \ar[rrr]_{π°\text{, isomorphism}} \ar@/^4mm/[rrrrrr]^{ψ°\text{, Galois, with group $G$}} &&& \what{X}° \ar[rrr]_{γ°\text{, Galois, with group $G$}} &&& X°.
}
$$
Write $(ℰ_{X°}, θ_{X°}) := (ℰ_{X_{\reg}}, θ_{X_{\reg}})|_{X°}$.  We use similar
notation also for other open sets and write
$(ℰ_{\wtilde{X}°}, θ_{ℰ_{\wtilde{X}°}})$.

\subsubsection*{Orbifold charts}
\approvals{Behrouz & yes \\ Daniel & yes \\ Stefan & yes \\ Thomas & yes}%

We also consider orbifold charts, as introduced in
Construction~\ref{construction:Q}.  The relevant morphisms and their
restrictions are summarised in the following diagram,
$$
\xymatrix{ %
  \wtilde{X}_α \ar[rrr]_{π_α\text{, resolution of sings.}} \ar@/^5mm/[rrrrrr]^{Π_α} &&& \what{X}_α \ar[rrr]_{q_α\text{, Galois, with group $H_α$}} &&& X_α \ar[rr]^{\text{quasi-étale}} && X \\
  \wtilde{X}°_α \ar[rrr]^{π°_α\text{, isomorphism}} \ar@/_5mm/[rrrrrr]_{Π°_α\text{, Galois, with group $H_α$}} \ar@{^(->}[u] &&& \wtilde{X}°_α \ar[rrr]^{q°_α\text{, Galois, with group $H_α$}} \ar@{^(->}[u] &&& X°_α \ar[rr]_{\text{étale}} \ar@{^(->}[u] && X°.  \ar@{^(->}[u]
}
$$

\subsection*{Implication \ref{il:5-1-1} $⇒$ \ref{il:5-1-2}, Step~3: Simplification}
\approvals{Behrouz & yes \\ Daniel & yes \\ Stefan & yes \\ Thomas & yes}%

The following claim allows us to concentrate on the big, open set
$X° ⊆ X_{\reg}$, simplifying notation substantially.

\begin{claim}\label{claim:simpla}
  To prove Theorem~\ref{thm:Qflat1new}, it suffices to show that the Higgs
  bundle $(ℰ_{X°}, θ_{ℰ_{X°}})$ admits a tame and purely imaginary harmonic
  bundle structure.
\end{claim}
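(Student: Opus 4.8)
The plan is to build the harmonic bundle structure on $X_{\reg}$ out of the hypothetical one on $X°$ by routing everything through the associated flat bundle: extend the flat bundle across the finitely many missing points, re-apply the Jost--Zuo existence theorem on $X_{\reg}$, and then match the resulting Higgs bundle with $(ℰ_{X_{\reg}}, θ_{ℰ_{X_{\reg}}})$ using the uniqueness statements already available. The first thing I would record is that, under Assumption~\ref{asswlog:dim2}, the complement $X_{\reg} ∖ X°$ is a finite set of points in the smooth surface $X_{\reg}$, hence of real codimension four; therefore the inclusion $X° ↪ X_{\reg}$ induces an isomorphism on fundamental groups, and a fortiori on their profinite completions.

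So assume $(ℰ_{X°}, θ_{ℰ_{X°}})$ is induced by a tame, purely imaginary harmonic bundle $𝔼_{X°}$. By Remark~\ref{rem:tame_semisimple}, the associated flat bundle $(E_{X°}, ∇_{𝔼_{X°}})$ is semisimple, and by Fact~\ref{fact:flatext} (applied with the $π_1$-isomorphism just noted) it extends uniquely to a flat bundle $(E_{X_{\reg}}, ∇)$ on $X_{\reg}$; this extension is again semisimple, the two categories of local systems being equivalent. Theorem~\ref{thm:JZM} then endows $(E_{X_{\reg}}, ∇)$ with a tame, purely imaginary harmonic bundle structure $𝔽_{X_{\reg}}$, and I write $(ℱ_{X_{\reg}}, θ_{ℱ_{X_{\reg}}})$ for the induced Higgs bundle, which is algebraic by Remarks~\ref{rem:autoalg} and \ref{rem:autoalg1a}.

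It remains to identify this Higgs bundle with the prescribed one. Restricting $𝔽_{X_{\reg}}$ to $X°$ yields a tame, purely imaginary harmonic bundle (Remark~\ref{rem:restriction}) whose underlying flat bundle is $(E_{X°}, ∇_{𝔼_{X°}})$, by construction of the extension. Corollary~\ref{cor:uniq} therefore gives an isomorphism of Higgs bundles $(ℱ_{X_{\reg}}, θ_{ℱ_{X_{\reg}}})|_{X°} ≅ (ℰ_{X°}, θ_{ℰ_{X°}})$. Since $X°$ is a big open subset of $X_{\reg}$ and both $ℱ_{X_{\reg}}$ and $ℰ_{X_{\reg}}$ are reflexive — indeed locally free, the latter having been seen in Step~2 — uniqueness of reflexive extensions over big open subsets (cf.\ Remark and Notation~\ref{rem:autoalg1a}) upgrades this to an isomorphism $(ℱ_{X_{\reg}}, θ_{ℱ_{X_{\reg}}}) ≅ (ℰ_{X_{\reg}}, θ_{ℰ_{X_{\reg}}})$ over all of $X_{\reg}$, matching sheaves and Higgs fields alike. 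Hence $(ℰ_{X_{\reg}}, θ_{ℰ_{X_{\reg}}})$ is induced by a tame, purely imaginary harmonic bundle, which together with Assumption~\ref{asswlog:1} and the discussion at the start of the implication \ref{il:5-1-1} $⇒$ \ref{il:5-1-2} is everything Theorem~\ref{thm:Qflat1new} demands; the claim follows.

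The bulk of the work has been delegated to earlier results, so I expect the only delicate point to be the codimension bookkeeping that makes $π_1(X°) → π_1(X_{\reg})$ an isomorphism and thereby lets the flat bundle — and, through Theorem~\ref{thm:JZM} and Corollary~\ref{cor:uniq}, the whole harmonic/Higgs package — migrate from $X°$ back to $X_{\reg}$. I would also be careful to extract the \emph{prescribed} algebraic Higgs bundle at the end rather than some abstract one; this is exactly what uniqueness of reflexive extensions provides, and it is the reason the argument is phrased via reflexive sheaves instead of attempting to continue the harmonic metric directly across the missing points, where it could degenerate.
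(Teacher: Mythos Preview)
Your argument is correct and follows essentially the same route as the paper's own proof: extend the semisimple flat bundle from $X°$ to $X_{\reg}$ via the $π_1$-isomorphism and Fact~\ref{fact:flatext}, apply Theorem~\ref{thm:JZM} to obtain a tame purely imaginary harmonic structure on $X_{\reg}$, then use Corollary~\ref{cor:uniq} on $X°$ together with reflexivity to identify the resulting Higgs bundle with $(ℰ_{X_{\reg}}, θ_{ℰ_{X_{\reg}}})$. Your write-up is in fact slightly more explicit than the paper's in justifying the $π_1$-isomorphism and the semisimplicity of the extended flat bundle.
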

\Publication{
  \begin{proof}[Proof of Claim~\ref{claim:simpla}]
    This is a direct consequence of Fact~\ref{fact:flatext} and
    Corollary~\ref{cor:uniq}.  A detailed proof is found in the preprint version
    of this paper.  \qedhere\ (Claim~\ref{claim:simpla})
  \end{proof} %
} % end of \Publication
\Preprint{
  \begin{proof}[Proof of Claim~\ref{claim:simpla}]
    Assume that $(E_{X°}, \bar{∂}_{E_{X°}})$ admits a tame and purely imaginary
    harmonic bundle structure, say $𝔼_{X°}$.  Since $X°$ is a big subset of
    $X_{\reg}$, the natural morphisms between fundamental groups,
    $π_1(X°) → π_1(X_{\reg})$ is isomorphic.  As we recalled in
    Fact~\ref{fact:flatext}, this implies that the semisimple flat bundle
    $(E_{X°}, ∇_{𝔼_{X°}})$ on $X°$ extends to a semisimple flat bundle
    $(E'_{X_{\reg}}, ∇')$ on $X_{\reg}$.  The Jost-Zuo existence result for
    harmonic structures, Theorem~\ref{thm:JZM}, therefore applies and equips
    $E'_{X_{\reg}}$ with the structure of a tame and purely imaginary harmonic
    bundle $𝔼'_{X_{\reg}}$ with connection $∇_{𝔼'_{X_{\reg}}} = ∇'$.  Write
    $(ℰ'_{X_{\reg}}, θ'_{ℰ'_{X_{\reg}}})$ for the associated locally free Higgs
    sheaf.  The restriction of $𝔼'_{X_{\reg}}$ to $X°$ is tame and purely
    imaginary with connection $∇_{𝔼_{X°}}$.  Corollary~\ref{cor:uniq} hence
    implies
    $$
    (ℰ'_{X_{\reg}}, θ_{ℰ'_{X_{\reg}}})|_{X°} ≅ (ℰ_{X°}, θ_{ℰ_{X°}}) =
    (ℰ_{X_{\reg}}, θ_{ℰ_{X_{\reg}}})|_{X°}.
    $$
    Since $X°$ is a big subset of $X_{\reg}$, we conclude that
    $(ℰ_{X_{\reg}}, θ_{ℰ_{X_{\reg}}})$ admits a tame and purely imaginary
    harmonic bundle structure.  \qedhere \ (Claim~\ref{claim:simpla})
  \end{proof} %
} % end of \Preprint

\subsection*{Implication \ref{il:5-1-1} $⇒$ \ref{il:5-1-2}, Step~4: $G$-invariant harmonic structure on $E_{\wtilde{X}}$}
\approvals{Behrouz & yes \\ Daniel & yes \\ Stefan & yes \\ Thomas & yes}

To start the core of the argument, we will now show the existence of a
$G$-invariant harmonic structure on $(ℰ_{\wtilde{X}}, θ_{ℰ_{\wtilde{X}}})$.

\begin{claim}\label{claim:xza1}
  The locally free Higgs $G$-sheaf $(ℰ_{\wtilde{X}}, θ_{ℰ_{\wtilde{X}}})$ admits
  a $G$-invariant harmonic bundle structure
  $𝔼_{\wtilde{X}} := (E_{\wtilde{X}}, \bar{∂}_{E_{\wtilde{X}}},
  θ_{ℰ_{\wtilde{X}}}, h_{E_{\wtilde{X}}})$.
\end{claim}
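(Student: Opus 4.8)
The plan is to obtain the $G$-invariant harmonic structure on $(ℰ_{\wtilde{X}}, θ_{ℰ_{\wtilde{X}}})$ by applying the nonabelian Hodge correspondence for klt spaces to the cover $\what{X}$ and then pulling back via the resolution $π$. First I would observe that by Claim~\ref{claim:ccl1} the Higgs $G$-bundle $(ℰ_{\wtilde{X}}, θ_{ℰ_{\wtilde{X}}})$ is semistable with respect to $\wtilde{H}$ and has vanishing Chern classes; equivalently, restricting through the resolution $π$, the locally free Higgs $G$-sheaf $(ℰ_{\what{X}}, θ_{ℰ_{\what{X}}})$ from Construction~\ref{cons:Q-shf} is semistable with vanishing Chern classes (in the $ℚ$-sense) on the klt space $\what{X}$, so it lies in $\Higgs_{\what{X}}$. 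The nonabelian Hodge correspondence for klt spaces \cite[Thm.~3.4]{GKPT17} then associates to $(ℰ_{\what{X}}, θ_{ℰ_{\what{X}}})$ a local system on $\what{X}$, and because everything is $G$-equivariant (the equivalence is functorial), this local system is $G$-equivariant as well.

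Next I would transport this structure to $\wtilde{X}$. Since $π$ is a $G$-equivariant resolution and an isomorphism over $\wtilde{X}°$, the local system on $\what{X}$ pulls back to a $G$-equivariant local system on $\wtilde{X}°$; using that $\wtilde{X}°$ is big inside $\wtilde{X}$ and the fundamental-group surjectivity recorded in the discussion around Fact~\ref{fact:flatext} (applied $G$-equivariantly, or rather noting $π_1(\wtilde{X}°) \to π_1(\wtilde{X})$ is the relevant map) — actually, more simply, since $\what{X}$ and $\wtilde{X}$ have isomorphic fundamental groups as $π$ is a resolution of a variety with rational singularities, the local system extends to a $G$-equivariant local system on all of $\wtilde{X}$. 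By \cite[Prop.~3.11]{GKPT17} (or the underlying Mochizuki–Simpson results) the associated flat bundle, being on a smooth projective variety, carries a harmonic metric; on a \emph{projective} manifold this is the classical Corlette–Simpson statement, and uniqueness of the harmonic metric up to flat automorphisms (Theorem~\ref{thm:JZM}, or its compact version) forces the metric to be $G$-invariant after averaging — concretely, $\frac{1}{|G|}\sum_{g \in G} g^* h$ is again a harmonic metric inducing the same flat connection, hence by uniqueness differs from $h$ only by a flat automorphism, and one standard argument promotes this to genuine $G$-invariance.

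It then remains to check that the holomorphic structure and Higgs field determined by this $G$-invariant harmonic metric agree with $\bar{∂}_{E_{\wtilde{X}}}$ and $θ_{ℰ_{\wtilde{X}}}$. Here I would invoke Corollary~\ref{cor:uniq} together with Remark~\ref{rem:autoalgII} on the open set $\wtilde{X}°$: the flat bundle underlying the harmonic structure we just built and the flat bundle that \cite[Prop.~3.11]{GKPT17} attaches to $(ℰ_{\what{X}}, θ_{ℰ_{\what{X}}})$ over $\what{X}°$ (pulled back to $\wtilde{X}°$) agree by construction, so the induced holomorphic Higgs bundles are holomorphically isomorphic over $\wtilde{X}°$, and by reflexivity/bigness this isomorphism extends over $\wtilde{X}$ to identify them with $(ℰ_{\wtilde{X}}, θ_{ℰ_{\wtilde{X}}})$. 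The main obstacle I anticipate is bookkeeping the $G$-equivariance through every step — checking that the correspondence of \cite[Thm.~3.4]{GKPT17}, the extension across the big open sets, and the harmonic-metric construction are all compatible with the $G$-action, and in particular running the averaging/uniqueness argument carefully so that the resulting metric is honestly $G$-invariant and not merely $G$-invariant up to flat gauge transformations. A secondary technical point is ensuring the $ℚ$-Chern class vanishing on the singular $\what{X}$ is correctly inherited from Claim~\ref{claim:ccl1} via the resolution, but this is already essentially contained in \cite[Thm.~3.13]{GKPT15} and the functoriality of Chern classes.
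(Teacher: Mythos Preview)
Your approach is far more circuitous than the paper's, and it has a real gap. The paper's proof is essentially two lines: since $\wtilde{X}$ is a \emph{smooth projective} surface and Claim~\ref{claim:ccl1} says $(ℰ_{\wtilde{X}}, θ_{ℰ_{\wtilde{X}}})$ is $G$-stable with respect to the $G$-invariant ample $\wtilde{H}$, Simpson's classical existence result \cite[Thm.~1 on p.~878]{MR944577} produces a $G$-invariant Hermitian--Yang--Mills metric $h_{E_{\wtilde{X}}}$ directly. The vanishing of Chern classes (again Claim~\ref{claim:ccl1}) then forces the associated connection $∇_{𝔼_{\wtilde{X}}} = ∂ + \bar{∂} + θ + θ^h$ to be flat, so the tuple is harmonic. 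No descent, no klt correspondence, no averaging, no comparison step. You are on a smooth projective variety the whole time, and Simpson's theorem is formulated for $G$-stable Higgs bundles precisely so that the metric comes out $G$-invariant.

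Your route, by contrast, passes through the singular space $\what{X}$ and invokes \cite[Thm.~3.4]{GKPT17}, which requires $\what{X}$ to be klt. But $\what{X}$ is only known to be the normal (Cohen--Macaulay) global cover from Construction~\ref{construction:Q}; the maps $q_α: \what{X}_α → S_α$ are Galois covers of smooth charts and can branch, so $\what{X}$ need not have quotient (hence klt) singularities, and you give no argument for this. Even granting that, your averaging argument for $G$-invariance of the harmonic metric is not quite right: averaging a harmonic metric does not obviously yield a harmonic metric (harmonicity is not linear in $h$), and the uniqueness-up-to-flat-automorphism statement only tells you each $g^*h$ differs from $h$ by a flat automorphism, which does not by itself produce an honestly invariant metric. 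Finally, your comparison step leans on the very machinery (the nonabelian Hodge correspondence and \cite[Prop.~3.11]{GKPT17}) whose singular analogue this section is building; while not strictly circular, it is needlessly heavy when the smooth projective case is available off the shelf.
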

\begin{proof}[Proof of Claim~\ref{claim:xza1}]
  The $G$-stability found in Claim~\ref{claim:ccl1} allows us to apply a
  classical result of Simpson, \cite[Thm.~1 on p.~878]{MR944577}, which states
  that that $E_{\wtilde{X}}$ carries a $G$-invariant Hermitian metric
  $h_{E_{\wtilde{X}}}$ that is Hermitian Yang-Mills with respect to the Kähler
  metric $ω_{\wtilde{H}}$.  Using the notation of Fact and
  Definition~\ref{def:harm}, vanishing of Chern classes, Claim~\ref{claim:ccl1},
  and \cite[discussion on p.~16f]{MR1179076} imply that the connection
  $$
  ∇_{𝔼_{\wtilde{X}}} := ∂_{E_{\wtilde{X}}} + \bar{∂}_{E_{\wtilde{X}}} +
  θ_{ℰ_{\wtilde{X}}} + θ_{ℰ_{\wtilde{X}}}^{h_{E_{\wtilde{X}}}}
  $$
  is in fact flat and hence that
  $(E_{\wtilde{X}}, \bar{∂}_{E_{\wtilde{X}}}, θ_{ℰ_{\wtilde{X}}},
  h_{E_{\wtilde{X}}})$ is a harmonic bundle.  \qedhere\ (Claim~\ref{claim:xza1})
\end{proof}

\begin{claim}\label{claim:kl1}
  If $F_{\wtilde{X}}$ is a $G$-invariant flat subbundle of $E_{\wtilde{X}}$,
  then either $F_{\wtilde{X}} = 0$ or $F_{\wtilde{X}} = E_{\wtilde{X}}$.
\end{claim}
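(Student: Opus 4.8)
The plan is to argue by contradiction, exploiting the $G$-stability recorded in Claim~\ref{claim:ccl1}. Assume that $F_{\wtilde{X}}$ is a $G$-invariant flat subbundle of $E_{\wtilde{X}}$ with $F_{\wtilde{X}} \neq 0$ and $F_{\wtilde{X}} \neq E_{\wtilde{X}}$, so in particular $0 < \rank F_{\wtilde{X}} < \rank E_{\wtilde{X}}$. The harmonic bundle $𝔼_{\wtilde{X}}$ from Claim~\ref{claim:xza1} lives on the projective — hence quasi-projective — manifold $\wtilde{X}$, and it is trivially tame and purely imaginary since no boundary divisor needs to be removed (alternatively, one may invoke the compact Kähler case of Simpson's theory directly, cf.\ \cite[p.~13]{MR1179076}). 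Therefore Lemma~\ref{lem:flshtpihb} applies to the $∇_{𝔼_{\wtilde{X}}}$-invariant subbundle $F_{\wtilde{X}}$: the operator $\bar{∂}_{E_{\wtilde{X}}}$ restricts so as to equip $F_{\wtilde{X}}$ with the structure of a Higgs-invariant holomorphic subbundle of $(E_{\wtilde{X}}, \bar{∂}_{E_{\wtilde{X}}})$, and I denote the corresponding locally free Higgs subsheaf by $(ℱ_{\wtilde{X}}, θ_{ℱ_{\wtilde{X}}}) ⊆ (ℰ_{\wtilde{X}}, θ_{ℰ_{\wtilde{X}}})$.

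The next step is to observe that this is a Higgs $G$-subsheaf. This is where $G$-invariance of the data is used: the metric $h_{E_{\wtilde{X}}}$ and each of the four operators in $∇_{𝔼_{\wtilde{X}}} = ∂ + \bar{∂} + θ + θ^h$ are $G$-invariant, so every construction in the proof of Lemma~\ref{lem:flshtpihb} — the metric complement of $F_{\wtilde{X}}$, the restricted operators $\bar{∂}|_{F_{\wtilde{X}}}$ and $θ|_{F_{\wtilde{X}}}$ — is $G$-equivariant. Hence the inclusion $(ℱ_{\wtilde{X}}, θ_{ℱ_{\wtilde{X}}}) ↪ (ℰ_{\wtilde{X}}, θ_{ℰ_{\wtilde{X}}})$ is $G$-equivariant, exhibiting $(ℱ_{\wtilde{X}}, θ_{ℱ_{\wtilde{X}}})$ as a $G$-invariant, Higgs-invariant, locally free (in particular saturated) subsheaf of intermediate rank.

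Finally I would compare slopes. As a $\cC^∞$-bundle $F_{\wtilde{X}}$ carries the restriction of the flat connection $∇_{𝔼_{\wtilde{X}}}$, so all its real Chern classes vanish; in particular $c_1(ℱ_{\wtilde{X}})·\wtilde{H} = 0$. On the other hand Claim~\ref{claim:ccl1} gives $c_1(ℰ_{\wtilde{X}}) = 0$, so $(ℱ_{\wtilde{X}}, θ_{ℱ_{\wtilde{X}}})$ and $(ℰ_{\wtilde{X}}, θ_{ℰ_{\wtilde{X}}})$ have equal ($=0$) $\wtilde{H}$-slope. A $G$-invariant, Higgs-invariant subsheaf of intermediate rank with the same slope as $(ℰ_{\wtilde{X}}, θ_{ℰ_{\wtilde{X}}})$ contradicts the $G$-stability asserted in Claim~\ref{claim:ccl1}, so the assumption was untenable and $F_{\wtilde{X}}$ equals $0$ or $E_{\wtilde{X}}$. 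I do not anticipate a genuine obstacle; the only point needing care is the bookkeeping that the holomorphic and Higgs structures recovered on $F_{\wtilde{X}}$ via Lemma~\ref{lem:flshtpihb} are $G$-equivariant, which is immediate once one notes that all ingredients feeding into that lemma are $G$-invariant here.
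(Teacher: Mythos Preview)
Your proof is correct and follows essentially the same route as the paper's: apply Lemma~\ref{lem:flshtpihb} to realise $F_{\wtilde{X}}$ as a Higgs-invariant holomorphic $G$-subsheaf, observe that flatness forces its Chern classes (hence its $\wtilde{H}$-slope) to vanish, and invoke the $G$-stability from Claim~\ref{claim:ccl1}. The paper is terser about the $G$-equivariance bookkeeping, but the argument is the same.
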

\begin{proof}[Proof of Claim~\ref{claim:kl1}]
  We have seen in Lemma~\ref{lem:flshtpihb} that $F_{\wtilde{X}}$ is a
  holomorphic subbundle of $(E_{\wtilde{X}}, \bar{∂}_{E_{\wtilde{X}}})$ and
  yields a Higgs-invariant, locally free subsheaf $ℱ_{\wtilde{X}}$ of
  $(ℰ_{\wtilde{X}}, θ_{ℰ_{\wtilde{X}}})$.  The assumption that $F_{\wtilde{X}}$
  is a $G$-invariant subbundle of $E_{\wtilde{X}}$ guarantees that
  $ℱ_{\wtilde{X}}$ is a Higgs-invariant $G$-subsheaf of the Higgs $G$-sheaf
  $(ℰ_{\wtilde{X}}, θ_{ℰ_{\wtilde{X}}})$.  Using that $F_{\wtilde{X}}$ is
  invariant with respect to the flat connection $∇_{𝔼_{\wtilde{X}}}$, we obtain
  that $F_{\wtilde{X}}$ is again flat, so that all its Chern classes vanish.
  Claim~\ref{claim:kl1} thus follows from Claim~\ref{claim:ccl1} above.
  \qedhere\ (Claim~\ref{claim:kl1})
\end{proof}

\begin{claim}\label{claim:kl2}
  If $F_{\wtilde{X}°}$ is a $G$-invariant flat subbundle of
  $(E_{\wtilde{X}°}, ∇_{𝔼_{\wtilde{X}}}|_{\wtilde{X}°})$, then $F_{\wtilde{X}°}$
  extends to a $G$-invariant flat subbundle of $E_{\wtilde{X}}$.
\end{claim}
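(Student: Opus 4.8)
The plan is to pass to monodromy representations. By Claim~\ref{claim:xza1} the connection $∇_{𝔼_{\wtilde{X}}}$ is defined on all of the connected complex manifold $\wtilde{X}$, so that $(E_{\wtilde{X}}, ∇_{𝔼_{\wtilde{X}}})$ is an honest flat bundle on $\wtilde{X}$ whose restriction to $\wtilde{X}°$ is the flat bundle on which $F_{\wtilde{X}°}$ lives. The one external ingredient I would invoke is the standard fact that the inclusion $ι : \wtilde{X}° \hookrightarrow \wtilde{X}$ induces a \emph{surjection} $ι_* : π_1(\wtilde{X}°, x_0) → π_1(\wtilde{X}, x_0)$ on fundamental groups: indeed $\wtilde{X} ∖ \wtilde{X}°$ is a proper closed analytic subset, hence of real codimension at least two, and both loops and homotopies of loops can be moved off such a set. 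Note that one \emph{cannot} appeal to Fact~\ref{fact:flatext} here, since $\wtilde{X} ∖ \wtilde{X}°$ contains the $π$-exceptional curves lying over $\what{X}_{\sing}$ and is therefore genuinely of codimension one; it is the surjectivity, not the injectivity, of $ι_*$ that does the work.

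With this in place the argument is short. Fixing a base point $x_0 ∈ \wtilde{X}°$, write $ρ : π_1(\wtilde{X}, x_0) → \Gl\bigl((E_{\wtilde{X}})_{x_0}\bigr)$ for the monodromy representation of $(E_{\wtilde{X}}, ∇_{𝔼_{\wtilde{X}}})$; its restriction to $\wtilde{X}°$ corresponds to $ρ ◦ ι_*$, and surjectivity of $ι_*$ gives the equality of images $ρ\bigl(π_1(\wtilde{X})\bigr) = (ρ ◦ ι_*)\bigl(π_1(\wtilde{X}°)\bigr)$. The flat subbundle $F_{\wtilde{X}°}$ is the same datum as a subspace $U ⊆ (E_{\wtilde{X}})_{x_0}$ invariant under $(ρ ◦ ι_*)\bigl(π_1(\wtilde{X}°)\bigr)$, hence — by that equality — invariant under $ρ\bigl(π_1(\wtilde{X})\bigr)$, hence the fibre at $x_0$ of a flat subbundle $F_{\wtilde{X}} ⊆ (E_{\wtilde{X}}, ∇_{𝔼_{\wtilde{X}}})$. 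Since a flat subbundle is recovered from any one of its fibres by parallel transport, $F_{\wtilde{X}}$ restricts to $F_{\wtilde{X}°}$ over $\wtilde{X}°$, and it is the unique flat subbundle of $E_{\wtilde{X}}$ with this property.

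Finally I would read off $G$-invariance from this uniqueness. By Construction~\ref{cons:pbfq} the sheaf $ℰ_{\wtilde{X}}$ carries a $G$-action, and by Claim~\ref{claim:xza1} the harmonic structure $𝔼_{\wtilde{X}}$, and in particular $∇_{𝔼_{\wtilde{X}}}$, is $G$-invariant; hence each $g ∈ G$ induces an automorphism of $E_{\wtilde{X}}$ preserving $∇_{𝔼_{\wtilde{X}}}$ and thus carrying flat subbundles to flat subbundles. Therefore $g·F_{\wtilde{X}}$ is again a flat subbundle of $(E_{\wtilde{X}}, ∇_{𝔼_{\wtilde{X}}})$, and its restriction to the $G$-invariant open set $\wtilde{X}°$ equals $g·F_{\wtilde{X}°} = F_{\wtilde{X}°}$ by hypothesis; uniqueness then forces $g·F_{\wtilde{X}} = F_{\wtilde{X}}$. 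I do not expect a genuine obstacle in any of this: the only point requiring a little care is the surjectivity of $ι_*$ discussed above, everything else being a formal manipulation of monodromy data.
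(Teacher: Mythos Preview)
Your argument is correct and takes a genuinely different, more direct route than the paper. The paper localises to the orbifold charts $X_α$: it descends both the connection $∇_{𝔼_{\wtilde{X}}}|_{\wtilde{X}°_α}$ and the subbundle $F_{\wtilde{X}°_α}$ through the Galois morphism $Π°_α$ down to $X°_α$ (invoking Proposition~\ref{prop:conndescent} and a descent result for $G$-invariant subsheaves), then uses that $X°_α$ is a \emph{big} open subset of the \emph{smooth} surface $X_α$, so that $π_1(X°_α) → π_1(X_α)$ is an isomorphism and Fact~\ref{fact:flatext} extends the subbundle; finally it pulls back via $Π_α$.

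Your observation that one needs only \emph{surjectivity} of $ι_* : π_1(\wtilde{X}°) → π_1(\wtilde{X})$, because the ambient flat bundle $(E_{\wtilde{X}}, ∇_{𝔼_{\wtilde{X}}})$ is already defined on all of $\wtilde{X}$, bypasses the entire descent-and-chart apparatus. The surjectivity itself is the standard fact that removing a closed analytic subset (real codimension at least two) from a connected complex manifold yields a surjection on $π_1$; your remark that Fact~\ref{fact:flatext} is inapplicable here, since the exceptional locus of $π$ makes $\wtilde{X} ∖ \wtilde{X}°$ a divisor, is exactly the point. The paper's approach has the virtue of staying within the $ℚ$-variety framework set up earlier and of reusing Fact~\ref{fact:flatext} verbatim, but your monodromy argument is shorter and more transparent, and the uniqueness-based proof of $G$-invariance is cleaner than the paper's appeal to density.
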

\begin{proof}[Proof of Claim~\ref{claim:kl2}]
  The question is local over the $X_α$.  More precisely, using the notation
  introduced in Step~2, it suffices to show that for every index $α$, the
  restricted bundle $F_{\wtilde{X}°_α} := F_{\wtilde{X}°}|_{\wtilde{X}°_α}$ on
  $\wtilde{X}°_α$ extends to a subbundle $F_{\wtilde{X}_α} ⊆ E_{\wtilde{X}_α}$
  on $\wtilde{X}_α$ that is invariant with respect to $∇_{𝔼_{\wtilde{X}}}$.  The
  $G$-invariance follows then automatically from density of
  $\wtilde{X}°_α ⊆ \wtilde{X}_α$.
 
  Recall from Constructions~\ref{cons:Q-shf} and \ref{cons:pbfq} that
  $E_{\wtilde{X}°_α}$ is a pull-back from $X°_α$, say
  $E_{\wtilde{X}°_α} ≅ (Π°_α)^* E_{X°_α}$.  We claim that both the connection
  $∇_{𝔼_{\wtilde{X}}}|_{\wtilde{X}°}$ and the subbundle $F_{\wtilde{X}°_α}$
  descend to $X°_α$, too.  Indeed, since $∇_{𝔼_{\wtilde{X}}}$ is invariant under
  $G$, and hence also invariant under the Galois group
  $H_α = \operatorname{Gal}(Π°_α) ⊆ G$, we may apply
  Proposition~\ref{prop:conndescent} to show the existence of a connection
  $∇_{E_{X°_α}}$ on $E_{X°_α}$ such that
  $∇_{𝔼_{\wtilde{X}}}|_{\wtilde{X}°} = (Π°_α)^* ∇_{E_{X°_α}}$.  Moreover, it
  follows from \cite[Prop.~2.16]{GKPT15} applied to the associated locally free
  sheaves of $𝒪_{\wtilde{X}°_α}$-modules that there exists a subbundle
  $F_{X°_α} ⊆ E_{X°_α}$ with $F_{\wtilde{X}°_α} = (Π°_α)^* F_{X°_α}$.  The
  subbundle $F_{X°_α}$ is then clearly invariant with respect to the connection
  $∇_{E_{X°_α}}$.

  As $X°_α$ is a big open subset of the smooth, quasi-projective surface $X_α$,
  the natural morphism of fundamental groups, $π_1(X°_α) → π_1(X_α)$, is
  isomorphic.  Fact~\ref{fact:flatext} therefore asserts that the subbundle
  $F_{X°_α} ⊆ E_{X°_α}$ extends from $X°_α$ to a $∇_{E_{X°_α}}$-invariant
  subbundle $F_{X°_α} ⊆ E_{X°_α}$ that exists on all of $X_α$.  Pulling back, we
  define the desired extension of $F_{\wtilde{X}°_α}$ as
  $F_{\wtilde{X}_α} := (Π°_α)^* F_{X°_α}$.  \qedhere\ (Claim~\ref{claim:kl2})
\end{proof}

Combining Claims~\ref{claim:kl1} and \ref{claim:kl2}, we arrive at the following
result.

\begin{cons}\label{cons:xp1}
  If $F_{\wtilde{X}°}$ is a $G$-invariant subbundle of $E_{\wtilde{X}°}$ that is
  also invariant with respect to $∇_{𝔼_{\wtilde{X}}}|_{\wtilde{X}°}$, then
  $F_{\wtilde{X}°} = 0$ or $F_{\wtilde{X}°} = E_{\wtilde{X}°}$.  \qed\
  (Consequence~\ref{cons:xp1})
\end{cons}

\subsection*{Implication \ref{il:5-1-1} $⇒$ \ref{il:5-1-2}, Step~5: Harmonic structure on $E_{X°}$}
\approvals{Behrouz & yes \\ Daniel & yes \\ Stefan & yes \\ Thomas & yes}

Eventually, we would like to show that the metric $h_{E_{\wtilde{X}}}$ descends
to a smooth Hermitian metric on $E_{X°}$.  However, owing to branching of the
map $γ$ over the smooth part of $X_{\reg}$, it is not clear from the outset
whether the natural stratified $\cC^∞$-structure on the quotient $\wtilde{X}°/G$
will coincide with the $\cC^∞$-structure induced by the complex structure on
$X°$.  Rather then showing descent of the metric directly, we will first discuss
the flat structure on $E°$ and construct a metric from there.

Write $∇_{E_{X°}}$ for the unique connection on $E_{X°}$ such that
$$
∇_{𝔼_{\wtilde{X}}}|_{\wtilde{X}°} = (ψ°)^*∇_{E_{X°}},
$$
which exists by Proposition~\ref{prop:conndescent}.  Consequence~\ref{cons:xp1}
implies that the flat bundle $(E_{X°}, ∇_{E_{X°}})$ is simple.  We may hence
apply the Jost-Zuo existence result for harmonic structures,
Theorem~\ref{thm:JZM}, to $(E_{X°}, ∇_{E_{X°}})$ and find a tame and purely
imaginary, harmonic bundle structure
$𝔼'_{X°} = (E_{X°}, \bar{∂}'_{E_{X°}}, θ'_{ℰ_{X°}}, h'_{E_{X°}})$ on $E_{X°}$
whose associated connection $∇_{𝔼'_{X°}}$ equals $∇_{E_{X°}}$.

\subsection*{Implication \ref{il:5-1-1} $⇒$ \ref{il:5-1-2}, Step~6: Comparison}
\approvals{Behrouz & yes \\ Daniel & yes \\ Stefan & yes \\ Thomas & yes}

There are now two tame and purely imaginary, $G$-invariant harmonic bundle
structures on $E_{\wtilde{X}°}$.  First, the restriction
$𝔼_{\wtilde{X}}|_{\wtilde{X}°}$, which is obviously tame and purely imaginary.
The associated connection $∇_{𝔼_{\wtilde{X}}}|_{\wtilde{X}°}$ on
$E_{\wtilde{X}°}$ is semisimple by Remark~\ref{rem:tame_semisimple}.  Second, by
\cite[Lem.~25.29]{MR2283665} the pull-back of the harmonic structure
$(ψ°)^* 𝔼'_{X°}$ on $E_{X°}$ is also tame and purely imaginary.  The associated
connection on $E_{\wtilde{X}°}$ is the pull-back of $∇_{E_{X°}}$, and hence
likewise equal to $∇_{𝔼_{\wtilde{X}}}|_{\wtilde{X}°}$.

Using the observation that the two tame and purely imaginary bundles induce the
same semisimple connection, Theorem~\ref{thm:JZM} now immediately implies that
the complex structures and Higgs fields on $\wtilde{X}°$ are equal
\begin{equation}\label{eq:vb}
  \begin{matrix}
    (ψ°)^*\bar{∂}_{E_{X°}} & = & \bar{∂}_{E_{\wtilde{X}}}|_{\wtilde{X}°} & = & (ψ°)^*\bar{∂}'_{E_{X°}} \\
    (ψ°)^* θ_{ℰ_{X°}} & = & θ_{ℰ_{\wtilde{X}}}|_{\wtilde{X}°} & = & (ψ°)^* θ'_{ℰ_{X°}}.
  \end{matrix}
\end{equation}
We emphasise that \eqref{eq:vb} is a statement about equalities of operators,
rather than mere isomorphisms of complex structures and Higgs bundles.  In
particular, \eqref{eq:vb} implies that the holomorphic structure
$\bar{∂}_{E_{X°}}$ corresponding to $ℰ_{X°}$ equals $\bar{∂}'_{E_{X°}}$ and
furthermore that $θ_{ℰ_{X°}} = θ'_{ℰ_{X°}}$ on $X°$.  Consequently,
$(ℰ_{X_{\reg}}, θ_{ℰ_{X_{\reg}}}) ∈ \TPIH_{X_{\reg}}$.  The implication
\ref{il:5-1-1} $⇒$ \ref{il:5-1-2} of Theorem~\ref{thm:Qflat1new} is thus
established.

\subsection*{Implication \ref{il:5-1-2} $⇒$ \ref{il:5-1-1}}
\label{subsubsect:other_way}
\approvals{Behrouz & yes \\ Daniel & yes \\ Stefan & yes \\ Thomas & yes}

First, remark that in the semisimple case the implication \ref{il:5-1-2} $⇒$
\ref{il:5-1-1} is immediate consequence of Lemma~\ref{lem:stvbn},
Proposition~\ref{prop:p1}, \cite[Thm.~3.10]{GKPT17}, and standard calculus of
$ℚ$-Chern classes; this is explained in detail in \cite[Sect.~3.8]{GKPT15}.

Second, assume in addition that
$(ℰ_{X_{\reg}}, θ_{ℰ_{X_{\reg}}}) ∈ \TPIH_{X_{\reg}}$ is induced by a harmonic
bundle $𝔼 = (E, \bar{∂}_{X_{\reg}}, θ_{ℰ_{X_{\reg}}}, h)$ whose associated flat
bundle $(E, ∇_𝔼)$ is simple, but suppose that
$(ℰ_{X_{\reg}}, θ_{ℰ_{X_{\reg}}})$ is not stable.  Being polystable, it splits
as a direct sum of stable bundles
$$
(ℰ_{X_{\reg}}, θ_{ℰ_{X_{\reg}}}) = (ℰ_{X_{\reg}}^{(1)}, θ^{(1)}_{ℰ_{X_{\reg}}}) ⊕ \dots ⊕ (ℰ_{X_{\reg}}^{(m)}, θ^{(m)}_{ℰ_{X_{\reg}}}) \quad \text{ for some }m ∈ ℕ,
$$
each of which satisfies the Chern character vanishings formulated in
Item~\ref{il:5-1-1} above, cf.\ \cite[Step~2 in proof of Thm.~6.2]{GKP15}.
Applying \cite[Thm.~6.1]{GKPT17} and the Lefschetz hyperplane theorem,
\cite[Thm.\ in Sect.~II.1.2]{GoreskyMacPherson}, we find a smooth complete
intersection curve $C ⊂ X_{\reg}$ such that each
$(ℰ_{X_{\reg}}^{(j)}, θ^{(j)}_{ℰ_{X_{\reg}}})|_C$ is stable and such that the
natural map
\begin{equation}\label{eq:surjectiveLefschetz}
  (ι_C)_*: π_1(C) \twoheadrightarrow π_1(X_{\reg})
\end{equation}
is surjective.  Recall from Remark~\ref{rem:restriction} that the Higgs bundle
\begin{equation}\label{eq:split_after_restriction}
  (ℰ_{X_{\reg}}, θ_{ℰ_{X_{\reg}}})|_C = (ℰ_{X_{\reg}}^{(1)}, θ^{(1)}_{ℰ_{X_{\reg}}})|_C ⊕ \dots ⊕ (ℰ_{X_{\reg}}^{(m)}, θ^{(m)}_{ℰ_{X_{\reg}}})|_C
\end{equation}
admits a harmonic bundle structure induced from $𝔼$ by restriction.  The
associated flat bundle corresponds to the representation obtained by composing
the monodromy representation of $(E, ∇_𝔼)$ with \eqref{eq:surjectiveLefschetz}
and is therefore simple.  However, via the Simpson correspondence on $C$,
\cite[Cor.~1.3]{MR1179076}, this yields a contradiction to the decomposition
\eqref{eq:split_after_restriction} of $(ℰ_{X_{\reg}}, θ_{ℰ_{X_{\reg}}})|_C$.
\qed

\part{Applications}
\label{part:II}

%
% Do not edit the following line.  The text is automatically updated by
% subversion.
%
\svnid{$Id: 05a-nonabelian-klt.tex 883 2018-01-03 13:53:48Z peternell $}

\section{Nonabelian Hodge correspondences for smooth loci}
\subversionInfo
\approvals{Behrouz & yes \\ Daniel & yes \\ Stefan & yes\\ Thomas & yes}
\label{sec:klt}

The existence result for tame and purely imaginary harmonic bundles,
Theorem~\ref{thm:Qflat1new}, yields a nonabelian Hodge correspondence that
relates semisimple local systems on the smooth locus of a klt space to
polystable Higgs bundles on that locus.  As in Simpson's work, this
correspondence extends to a correspondence for arbitrary local systems.

\subsection{Nonabelian Hodge correspondence for polystable bundles}
\approvals{Behrouz & yes \\ Daniel & yes \\ Stefan & yes\\ Thomas & yes}

Before formulating the nonabelian Hodge correspondence for polystable bundles in
Theorem~\ref{thm:klt-smooth1} below, we need to specify the appropriate category
of bundles.  The following definition will be used.

\begin{defn}
  Let $X$ be a projective klt space $X$ and $ℰ_{X_{\reg}}$ be a locally free
  sheaf on $X_{\reg}$, whose extension to a reflexive sheaf on $X$ is denoted by
  $ℰ_X$.  If $H ∈ \Div(X)$ is ample, we say that \emph{$ℰ_{X_{\reg}}$ has
    vanishing $ℚ$-Chern classes with respect to $H$} if
  $\what{ch}_1(ℰ_X)·[H]^{n-1} = 0$ and $\what{ch}_2(ℰ_X)·[H]^{n-2} = 0$.
\end{defn}

As one immediate consequence of the existence result for harmonic structures,
Theorem~\ref{thm:Qflat1new}, we see that a Higgs bundle on the smooth locus of a
projective klt space is polystable and has vanishing $ℚ$-Chern classes after
cutting down with respect to one ample class, iff the same holds for any other
ample class.  This gives rise to the following fact, which we use to define the
relevant category of bundles.

\begin{factdef}[Category $\pHiggs_{X_{\reg}}$]
  Given a projective klt space $X$ and a locally free Higgs sheaf
  $(ℰ_{X_{\reg}}, θ_{ℰ_{X_{\reg}}})$ on $X_{\reg}$, the following conditions are
  equivalent.
  \begin{enumerate}
  \item There exists an ample $H ∈ \Div(X)$, such that
    $(ℰ_{X_{\reg}}, θ_{ℰ_{X_{\reg}}})$ is polystable and has vanishing $ℚ$-Chern
    classes with respect to $H$.

  \item For any ample $H ∈ \Div(X)$, the Higgs bundle
    $(ℰ_{X_{\reg}}, θ_{ℰ_{X_{\reg}}})$ is polystable and has vanishing $ℚ$-Chern
    classes with respect to $H$.
  \end{enumerate}
  With their natural morphisms, the Higgs bundle satisfying these conditions
  form a category, which we denote by $\pHiggs_{X_{\reg}}$.  \qed
\end{factdef}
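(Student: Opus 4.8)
The plan is to derive both implications from the existence result, Theorem~\ref{thm:Qflat1new}. The implication $(2) \Rightarrow (1)$ is immediate, since $X$ is projective and hence carries at least one ample divisor. The substance is in $(1) \Rightarrow (2)$, and the guiding observation is that the condition appearing in Item~\ref{il:5-1-2} of Theorem~\ref{thm:Qflat1new} is intrinsic to the pair $(ℰ_{X_{\reg}}, θ_{ℰ_{X_{\reg}}})$: membership in $\TPIH_{X_{\reg}}$ was introduced in Notation~\ref{not:TPI} without reference to any polarisation, and semisimplicity of the associated flat bundle is equally polarisation-free.

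Concretely, I would argue as follows. Suppose $(1)$ holds for some ample $H \in \Div(X)$. If $\dim X = 1$, then $X$ is a smooth projective curve, on which all ample divisors are numerically proportional, so $(1)$ and $(2)$ are trivially equivalent. Assume therefore $\dim X \geq 2$, and write $n = \dim X$. Since a locally free sheaf is reflexive, Theorem~\ref{thm:Qflat1new} applies to $(ℰ_{X_{\reg}}, θ_{ℰ_{X_{\reg}}})$ with the given $H$; reading the bracketed alternatives throughout in the ``polystable''/``semisimple flat bundle'' sense, condition $(1)$ is exactly Item~\ref{il:5-1-1}, whence Item~\ref{il:5-1-2} holds, i.e.\ $ℰ_{X_{\reg}}$ is locally free and $(ℰ_{X_{\reg}}, θ_{ℰ_{X_{\reg}}})$ is induced by a tame, purely imaginary harmonic bundle with semisimple associated flat bundle. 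By the polarisation-independence noted above, this statement does not involve $H$ and hence persists unchanged. Now fix an arbitrary ample $H' \in \Div(X)$ and apply the implication $\ref{il:5-1-2} \Rightarrow \ref{il:5-1-1}$ of Theorem~\ref{thm:Qflat1new} with $H'$ in place of $H$: it yields that $(ℰ_{X_{\reg}}, θ_{ℰ_{X_{\reg}}})$ is polystable with respect to $H'$ and that $\what{ch}_1(ℰ_X) \cdot [H']^{n-1} = 0$ and $\what{ch}_2(ℰ_X) \cdot [H']^{n-2} = 0$, which is precisely $(2)$. Finally, the Higgs bundles satisfying these equivalent conditions, together with the morphisms of Higgs sheaves between them, form a full subcategory of the category of locally free Higgs sheaves on $X_{\reg}$, so the definition of $\pHiggs_{X_{\reg}}$ is well posed.

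I expect the only genuine point requiring care to be the bookkeeping of the bracketed alternatives in Theorem~\ref{thm:Qflat1new}: in both directions one must consistently pair ``polystable'' in Item~\ref{il:5-1-1} with ``semisimple flat bundle'' in Item~\ref{il:5-1-2} (rather than the ``stable''/``simple'' reading). Once that is fixed, the argument is purely formal, and no analysis beyond what is already encapsulated in Theorem~\ref{thm:Qflat1new} is needed.
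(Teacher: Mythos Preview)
Your proposal is correct and matches the paper's approach exactly: the paper states in the paragraph preceding the Fact and Definition that the equivalence is ``an immediate consequence of the existence result for harmonic structures, Theorem~\ref{thm:Qflat1new},'' and then marks the statement with \qed without further argument. You have simply spelled out the two applications of Theorem~\ref{thm:Qflat1new} (one in each direction, with different polarisations) that the paper leaves implicit, and correctly handled the curve case that falls outside the hypotheses of that theorem.
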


The nonabelian Hodge correspondence for polystable bundles, which is a direct
analogue of \cite[Cor.~1.3]{MR1179076}, is now formulated as follows.

\begin{thm}[Nonabelian Hodge correspondence for $\pHiggs_{X_{\reg}}$]\label{thm:klt-smooth1}
  Let $X$ be a projective klt space.  Then, there exists an equivalence between
  the category $\pHiggs_{X_{\reg}}$ and the category $\sLSys_{X_{\reg}}$ of
  semisimple local systems on $X_{\reg}$.

  For $(ℰ_{X_{\reg}}, θ_{ℰ_{X_{\reg}}}) ∈ \pHiggs_{X_{\reg}}$ that are
  restrictions of polystable Higgs bundles on $X$ with vanishing Chern classes,
  and for local systems $\aE_{X_{\reg}} ∈ \sLSys_{X_{\reg}}$ that are
  restrictions of local systems on $X$, the correspondence is compatible with
  the global nonabelian Hodge Correspondence for projective klt spaces found in
  \cite[Sect.~3]{GKPT17}.
\end{thm}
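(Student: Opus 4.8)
The plan is to construct the equivalence by reducing to Theorem~\ref{thm:Qflat1new} and then bootstrapping from the polystable/semisimple case to the general case exactly as in Simpson's original argument, carefully using the passage to a maximally quasi-étale cover as a crutch for the non-semisimple part.

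First I would address the polystable $\leftrightarrow$ semisimple correspondence. In one direction, given $(ℰ_{X_{\reg}}, θ_{ℰ_{X_{\reg}}}) \in \pHiggs_{X_{\reg}}$, Theorem~\ref{thm:Qflat1new} produces (after splitting into stable summands, each of which satisfies the Chern-character vanishing by \cite[Step~2 in proof of Thm.~6.2]{GKP15}) a tame, purely imaginary harmonic bundle structure on each summand; the associated flat bundle is semisimple by Remark~\ref{rem:tame_semisimple}, and semisimplicity of the direct sum is clear. This defines a functor $\pHiggs_{X_{\reg}} \to \sLSys_{X_{\reg}}$. In the other direction, a semisimple local system on $X_{\reg}$ corresponds to a semisimple flat bundle, which by Theorem~\ref{thm:JZM} carries an essentially unique tame, purely imaginary harmonic structure, hence an associated Higgs bundle; by Lemma~\ref{lem:stvbn} (the $\TPIH$ case) it is polystable, and by the implication \ref{il:5-1-2} $\Rightarrow$ \ref{il:5-1-1} of Theorem~\ref{thm:Qflat1new} it has vanishing $ℚ$-Chern classes, so it lies in $\pHiggs_{X_{\reg}}$. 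Corollary~\ref{cor:uniq} (together with Remark~\ref{rem:autoalgII}) shows these two constructions are mutually inverse on objects and respect morphisms: a flat morphism between the associated flat bundles is automatically holomorphic and Higgs-compatible. This handles the semisimple/polystable equivalence.

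Next I would extend to arbitrary local systems. Here one runs the standard Simpson machinery: an arbitrary local system on $X_{\reg}$ has a socle (maximal semisimple sub), is built by successive extensions of semisimple ones, and the harmonic/Higgs correspondence is extended by requiring compatibility with these filtrations — equivalently, one works with the category of all Higgs bundles on $X_{\reg}$ that are \emph{successive extensions} of objects in $\pHiggs_{X_{\reg}}$, the grading being functorially determined. The key point is that $\Ext^1$ in the two categories match; on $X_{\reg}$ this can be checked by passing to a smooth complete intersection curve $C \subset X_{\reg}$ with $π_1(C) \twoheadrightarrow π_1(X_{\reg})$ (Lefschetz, \cite[Thm.\ in Sect.~II.1.2]{GoreskyMacPherson}) and the restriction theorems \cite[Thm.~6.1]{GKPT17}, reducing to Simpson's curve case \cite[Cor.~1.3]{MR1179076}. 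Finally, for the compatibility assertion with the global correspondence of \cite[Sect.~3]{GKPT17}: given $(ℰ_{X_{\reg}}, θ_{ℰ_{X_{\reg}}})$ that extends to a polystable Higgs bundle $(ℰ_X, θ_{ℰ_X})$ on $X$ with vanishing Chern classes, Proposition~\ref{prop:p1} (applied to a maximally quasi-étale cover $γ : Y \to X$) identifies $γ^{[*]} ℰ_X$ with a locally free Higgs sheaf in $\Higgs_Y$ whose restriction to $Y°$ carries the pulled-back harmonic structure with associated local system $δ^*\aE_{X_{\reg}}$; since this local system extends to $Y$ and then descends, the local system on $X_{\reg}$ produced by our correspondence agrees with the restriction to $X_{\reg}$ of the one produced by the global correspondence, which is exactly the compatibility claimed.

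**The main obstacle** I expect is the bookkeeping in the non-semisimple extension step — verifying that the filtration data is functorial and that $\Ext^1$-groups agree on $X_{\reg}$ rather than on a smooth projective variety, since $X_{\reg}$ is only quasi-projective (though a big open subset of the projective $X$). This is where one must be careful that the curve $C$ genuinely computes $π_1$ and that morphisms and extensions of reflexive Higgs sheaves are detected after restriction to $C$; all the needed inputs (the restriction theorem \cite[Thm.~6.1]{GKPT17}, the boundedness from Corollary~\ref{cor:p2}, and the Lefschetz theorem) are available, so the argument should go through essentially as in Simpson, but it is the least mechanical part.
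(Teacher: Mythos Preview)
Your first paragraph constructs the two functors essentially as the paper does, using Theorem~\ref{thm:JZM}, Theorem~\ref{thm:Qflat1new}, and Lemma~\ref{lem:stvbn}; that part is fine.

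However, your second paragraph and your ``main obstacle'' are not part of this theorem at all. Theorem~\ref{thm:klt-smooth1} is explicitly about the \emph{polystable} category $\pHiggs_{X_{\reg}}$ versus the \emph{semisimple} category $\sLSys_{X_{\reg}}$; the extension to arbitrary local systems is the separate Theorem~\ref{thm:klt-smooth2}, whose proof the paper leaves to the reader. So you have spent most of your effort, and located your main difficulty, in a step that is not required here.

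More importantly, there is a real gap in how you handle morphisms. You invoke Corollary~\ref{cor:uniq} to conclude that ``a flat morphism between the associated flat bundles is automatically holomorphic and Higgs-compatible,'' but that corollary is stated only for \emph{isomorphisms} of the underlying flat bundles; it says nothing about arbitrary morphisms, and the passage is not automatic on a quasi-projective $X_{\reg}$. The paper proves fullness and faithfulness by a different route: it passes to a maximally quasi-étale Galois cover $γ: Y → X$, where the pulled-back local systems extend over all of $Y$ by the defining property of the cover, and where the global correspondence $η_Y$ of \cite{GKPT17} is already known to be an equivalence. Since the $G$-equivariant Higgs bundle $η_Y(γ^*\aE_{X_{\reg}})$ agrees with $γ^*(η_{X_{\reg}}(\aE_{X_{\reg}}))$ (Proposition~\ref{prop:p1}, Remark~\ref{rem:p1}), full-and-faithfulness of $η_{X_{\reg}}$ is read off from that of $η_Y$. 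Your direct approach could in principle be repaired (for instance, by viewing a flat morphism as a flat subbundle of a direct sum and invoking Lemma~\ref{lem:flshtpihb}, or by working inside the Hom-bundle), but you would have to spell this out; Corollary~\ref{cor:uniq} alone does not do the job.
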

\begin{proof}[Sketch of proof]
  Starting with a semisimple local system $\aE_{X_{\reg}}$ on
  $X_{\reg}$, let $(E_{X_{\reg}}, ∇_{E_{X_{\reg}}})$ be an associated flat
  bundle.  By Theorem~\ref{thm:JZM}, this bundle admits a tame and purely
  imaginary harmonic metric that is unique up to flat automorphisms; these
  preserve the induced decomposition \eqref{eq:Harm}.  The uniquely determined
  associated Higgs bundle $(ℰ_{X_{\reg}}, θ_{X_{\reg}})$ carries a unique
  algebraic structure by Remark and Notation \ref{rem:autoalg1a} and is moreover
  polystable by Lemma~\ref{lem:stvbn}.  Its $ℚ$-Chern classes vanish by
  Proposition~\ref{prop:p1}.  Assigning $(ℰ_{X_{\reg}}, θ_{X_{\reg}})$ to
  $(E_{X_{\reg}}, ∇_{E_{X_{\reg}}})$ defines a functor $η_{X_{\reg}}$ from
  $\sLSys_{X_{\reg}}$ to $\pHiggs_{X_{\reg}}$.  Compatibility with the global
  nonabelian Hodge Correspondence for projective klt spaces follows from the
  construction in \cite{GKPT17}; cf.~especially \cite[Prop.~3.10]{GKPT17}.

  We claim that the functor $η_{X_{\reg}}$ is an equivalence of categories; for
  that, we need to check that it is \emph{full}, \emph{faithful} and
  \emph{essentially surjective}.  While essential surjectivity quickly follows
  from Theorem~\ref{thm:Qflat1new}, we need to argue a bit more to establish the
  other two conditions.  To this end, let $γ: Y → X$ be a maximally quasi-étale
  cover, as provided by \cite[Thm.~1.5]{GKP13} and as used in the first part of
  Section~\ref{ssec:poticwXs0}.  Denote the Galois group of $γ$ by $G$.  By the
  defining property of the maximally quasi-étale cover, the pullback
  $γ^*(\aE_{X_{\reg}})$ extends to a $G$-equivariant local system $\aE_Y$ on
  $Y$.  The global nonabelian Hodge Correspondence for projective klt spaces
  then assigns a G-equivariant, locally free, polystable Higgs bundle
  $(ℰ_Y, θ_{Y}) = η_Y(γ^*(\aE_{X_{\reg}}))$ to $γ^*(\aE_{X_{\reg}})$, which is
  seen to coincide with
  $γ^*(ℰ_{X_{\reg}}, θ_{X_{\reg}}) = γ^*(η_{X_{\reg}}(\aE_{X_{\reg}}))$ wherever
  this makes sense, see Proposition~\ref{prop:p1} as well as Remark and
  Notation~\ref{rem:p1} and also again \cite[Prop.~3.10]{GKPT17}.  The fact that
  $η_Y$ is an equivalence of categories now quickly implies that $η_{X_{\reg}}$
  is full and faithful as well.
\end{proof}

\begin{rem}\label{rem:correspondences_coincide}
  It follows from the preceding proof and from Proposition~\ref{prop:p1} that on
  a maximally quasi-étale cover $Y$, the nonabelian Hodge correspondence for
  $\pHiggs_{Y_{\reg}}$ coincides with the nonabelian Hodge Correspondence for
  the projective klt space $Y$ when we apply the natural restriction functors on
  both sides of the correspondence.
\end{rem}

\subsection{Nonabelian Hodge correspondence for semistable bundles}
\approvals{Behrouz & yes \\ Daniel & yes \\ Stefan & yes\\ Thomas & yes}

In direct analogy to Simpson's work, Theorem~\ref{thm:klt-smooth1} extends to
give an equivalence between the category of flat bundles and arbitrary local
systems on $X_{\reg}$.  The (fairly standard) proof requires a version of the
nonabelian Hodge correspondence for a maximally quasi-étale cover, \cite[Thm.~3.4
and the discussion after Prop.~3.11]{GKPT17}, Theorem~\ref{thm:klt-smooth1}
above, and the formalities of differential graded categories (DGCs) established
in~\cite[Sect.~3]{MR1179076}.  The details are left to the reader.

\begin{factdef}[Category $\Higgs_{X_{\reg}}$]
  Given a projective klt space $X$ and a locally free Higgs sheaf
  $(ℰ_{X_{\reg}}, θ_{ℰ_{X_{\reg}}})$ on $X_{\reg}$, the following conditions are
  equivalent.
  \begin{enumerate}
  \item There exists an ample $H ∈ \Div(X)$, such that
    $(ℰ_{X_{\reg}}, θ_{ℰ_{X_{\reg}}})$ is semistable and has vanishing $ℚ$-Chern
    classes with respect to $H$.

  \item For any ample $H ∈ \Div(X)$, the Higgs bundle
    $(ℰ_{X_{\reg}}, θ_{ℰ_{X_{\reg}}})$ is semistable and has vanishing $ℚ$-Chern
    classes with respect to $H$.
  \end{enumerate}
  With their natural morphisms, the Higgs bundle satisfying these conditions
  form a category, which we denote by $\Higgs_{X_{\reg}}$.  \qed
\end{factdef}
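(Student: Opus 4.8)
The second condition trivially implies the first, so the plan is to prove the converse. Assume that $(ℰ_{X_{\reg}}, θ_{ℰ_{X_{\reg}}})$ is semistable and has vanishing $ℚ$-Chern classes with respect to one ample divisor $H ∈ \Div(X)$, and let $H' ∈ \Div(X)$ be an arbitrary ample divisor; I would show that $(ℰ_{X_{\reg}}, θ_{ℰ_{X_{\reg}}})$ is semistable and has vanishing $ℚ$-Chern classes with respect to $H'$ as well. The case $n := \dim X = 1$, where $X$ is a smooth curve, is elementary, so I assume $n ≥ 2$. The idea is to reduce to the polystable situation, where the already-established Fact and Definition for the category $\pHiggs_{X_{\reg}}$ supplies exactly this polarisation-independence, by passing to a Jordan–Hölder graded object.

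As a first step I would pick a Jordan–Hölder filtration $0 = ℰ^{(0)} ⊊ ℰ^{(1)} ⊊ ⋯ ⊊ ℰ^{(k)} = ℰ_{X_{\reg}}$ by saturated, generically Higgs-invariant subsheaves whose successive quotients $ℱ^{(i)} := ℰ^{(i)}/ℰ^{(i-1)}$ are Higgs-stable with respect to $H$ in the sense of Definition~\ref{def:sHsl}; semistability of $(ℰ_{X_{\reg}}, θ_{ℰ_{X_{\reg}}})$ together with $\what{ch}_1(ℰ_X)·[H]^{n-1} = 0$ forces every $ℱ^{(i)}$ to have slope zero with respect to $H$. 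The key step — and the one I expect to be the main obstacle — is to upgrade this to the statement that each $ℱ^{(i)}$ actually has \emph{vanishing} $ℚ$-Chern classes with respect to $H$, and is in particular reflexive on $X_{\reg}$. This is the Bogomolov–Gieseker argument used in \cite[Step~2 in proof of Thm.~6.2]{GKP15} and \cite[Sect.~6.3]{GKP15}: additivity of $ℚ$-Chern characters along the filtration shows that $\sum_i \what{ch}_2\bigl((ℱ^{(i)})^{**}_X\bigr)·[H]^{n-2}$ is bounded \emph{below} by $\what{ch}_2(ℰ_X)·[H]^{n-2} = 0$, the non-reflexivity defects being effective codimension-two cycles contributing non-negatively, while the Bogomolov–Gieseker inequality for $H$-semistable Higgs sheaves combined with the Hodge index theorem shows that each term $\what{ch}_2\bigl((ℱ^{(i)})^{**}_X\bigr)·[H]^{n-2}$ is $≤ 0$; hence all these terms vanish, all the codimension-two defects vanish, and Theorem~\ref{thm:Qflat1new} then applies to each $ℱ^{(i)}$ and shows in particular that it is locally free. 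Consequently the direct sum $\mathrm{gr}(ℰ) := \bigoplus_i ℱ^{(i)}$ is a locally free Higgs sheaf on $X_{\reg}$ that is polystable with vanishing $ℚ$-Chern classes with respect to $H$, i.e.\ $\mathrm{gr}(ℰ) ∈ \pHiggs_{X_{\reg}}$.

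Applying the Fact and Definition for $\pHiggs_{X_{\reg}}$, the Higgs sheaf $\mathrm{gr}(ℰ)$ is then polystable and has vanishing $ℚ$-Chern classes with respect to $H'$ as well. Since $\what{ch}_i(ℰ_X) = \what{ch}_i\bigl(\mathrm{gr}(ℰ)_X\bigr)$ for $i = 1, 2$ by additivity, this gives at once $\what{ch}_1(ℰ_X)·[H']^{n-1} = 0$ and $\what{ch}_2(ℰ_X)·[H']^{n-2} = 0$, so $ℰ_{X_{\reg}}$ has vanishing $ℚ$-Chern classes with respect to $H'$. For $H'$-semistability I would note that each $ℱ^{(i)}$, being a direct summand of the $H'$-polystable Higgs sheaf $\mathrm{gr}(ℰ)$, is itself $H'$-polystable, and that its slope with respect to $H'$ equals that of $\mathrm{gr}(ℰ)$, namely zero; since $(ℰ_{X_{\reg}}, θ_{ℰ_{X_{\reg}}})$ is obtained from the $ℱ^{(i)}$ by iterated extensions and an extension of Higgs sheaves that are $H'$-semistable of one and the same slope is again $H'$-semistable of that slope, it follows that $(ℰ_{X_{\reg}}, θ_{ℰ_{X_{\reg}}})$ is $H'$-semistable of slope zero. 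Together with the vanishing of the $ℚ$-Chern classes this is the second condition, and it completes the argument. The only genuinely non-formal ingredient is the vanishing (rather than mere non-positivity) of the second $ℚ$-Chern character of the Jordan–Hölder factors, which is what lets us land inside $\pHiggs_{X_{\reg}}$ and thereby invoke the previously established polarisation-independence; the attendant bookkeeping with $ℚ$-Chern classes and reflexive hulls on the smooth locus of a klt space is standard, cf.~\cite[Sect.~3.8]{GKPT15}.
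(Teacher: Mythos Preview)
Your approach is correct and is essentially what the paper has in mind. The paper gives no explicit proof of this Fact and Definition (it is simply stated with a \qed), but your reduction to the polystable case via a Jordan--H\"older filtration together with the Bogomolov--Gieseker/Hodge-index argument is precisely the ``standard argument'' the paper itself invokes in Step~1 of the proof of Theorem~\ref{thm:Qflat1new} (with the same reference to \cite[Step~2 in proof of Thm.~6.2]{GKP15}); once the graded pieces are seen to lie in $\pHiggs_{X_{\reg}}$, the already-established polarisation-independence there does the rest, exactly as you say.

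One small refinement: your argument shows that the codimension-two defect cycles have zero intersection with $[H]^{n-2}$, hence are numerically trivial as codimension-two classes, but this does not by itself force the defect sheaves to vanish (they could be supported in codimension $\ge 3$). That is, however, harmless: what you actually need is only that $\what{ch}_1$ and $\what{ch}_2$ of $ℰ_X$ and of $\bigoplus_i (ℱ^{(i)})^{**}_X$ agree after intersecting with powers of \emph{any} ample class, and this follows because the difference is the class of a torsion sheaf supported in codimension $\ge 3$. Similarly, for the $H'$-semistability step it suffices that each $ℱ^{(i)}$ agrees with its reflexive hull in codimension one, which is automatic. With this clarification your proof goes through as written.
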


In analogy to \cite[Cor.~3.10]{MR1179076}, the nonabelian Hodge correspondence
for semistable bundles now reads as follows.

\begin{thm}[Nonabelian Hodge correspondence for $\Higgs_{X_{\reg}}$]\label{thm:klt-smooth2}
  Let $X$ be a projective klt space.  Then, there exists equivalence between the
  category $\Higgs_{X_{\reg}}$ and the category $\LSys_{x_{\reg}}$ of local
  systems on $X_{\reg}$.

  For $(ℰ_{X_{\reg}}, θ_{ℰ_{X_{\reg}}}) ∈ \Higgs_{X_{\reg}}$ that are
  restrictions of bundles $(ℰ_X, θ_{ℰ_X}) ∈ \Higgs_X$ and for local systems
  $\aE_{X_{\reg}} ∈ \LSys_{X_{\reg}}$ that are restrictions of local systems
  $\aE_X ∈ \LSys_X$, the correspondence is compatible with the global nonabelian
  Hodge Correspondence for projective klt spaces found in
  \cite[Sect.~3]{GKPT17}.  \qed
\end{thm}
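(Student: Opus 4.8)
The plan is to promote the equivalence $\eta_{X_{\reg}}\colon \sLSys_{X_{\reg}}\to\pHiggs_{X_{\reg}}$ of Theorem~\ref{thm:klt-smooth1} to the larger categories, following Simpson's passage from the polystable to the semistable correspondence in \cite[Cor.~3.10]{MR1179076} and its klt-variant in the discussion after \cite[Prop.~3.11]{GKPT17}. First I would record the two structural facts that make this reduction possible. On the Higgs side, a semistable Higgs bundle on $X_{\reg}$ with vanishing $ℚ$-Chern classes admits a Jordan--Hölder filtration whose graded pieces are stable, and these graded pieces again lie in $\pHiggs_{X_{\reg}}$: their slopes all vanish, and a Bogomolov--Gieseker argument as in \cite[Step~2 in proof of Thm.~6.2]{GKP15} forces $\what{ch}_2$ of each factor to vanish as well. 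On the local-system side, every object of $\LSys_{X_{\reg}}$ is a successive extension of semisimple ones. Hence both $\Higgs_{X_{\reg}}$ and $\LSys_{X_{\reg}}$ are the extension closures of their semisimple subcategories, and by the formalism of differential graded categories in \cite[Sect.~3]{MR1179076} it suffices to lift $\eta_{X_{\reg}}$ to a quasi-isomorphism between the DGC built on $\pHiggs_{X_{\reg}}$ with Dolbeault (hyper-$\Ext$) morphism complexes and the DGC built on $\sLSys_{X_{\reg}}$ with the de~Rham morphism complexes; Simpson's machinery then recovers $\Higgs_{X_{\reg}}$ and $\LSys_{X_{\reg}}$ as the associated extension categories and turns such a quasi-isomorphism into the desired equivalence.

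To construct the functor and the DGC quasi-isomorphism I would not argue on the non-compact variety $X_{\reg}$ directly, but reduce to a compact model exactly as in the proof of Theorem~\ref{thm:Qflat1new}. Fix a maximally quasi-étale cover $\gamma\colon Y\to X$ with Galois group $G$ (\cite[Thm.~1.5]{GKP13}). Given a local system on $X_{\reg}$, pull it back to $Y_{\reg}$; its semisimplification extends to $Y$ by the defining property of the maximally quasi-étale cover, as in the proof of Proposition~\ref{prop:p1}, so the local monodromies of the original local system around the codimension-$\geq 2$ singular locus of $Y$ are unipotent of finite order and hence trivial, and the local system itself extends to a $G$-equivariant local system on $Y$. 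On the \emph{projective} klt space $Y$ the full semistable nonabelian Hodge correspondence $\Higgs_Y\simeq\LSys_Y$, together with the $\Ext$-compatibility it carries, is furnished by \cite[Thm.~3.4 and the discussion after Prop.~3.11]{GKPT17}, which itself rests on Simpson's compact case and the DGC formalism. Applying it $G$-equivariantly and then passing to invariant push-forwards and restrictions to $X_{\reg}$, as in Proposition~\ref{prop:p1} and Remark and Notation~\ref{rem:p1}, produces a functor $\LSys_{X_{\reg}}\to\Higgs_{X_{\reg}}$ extending $\eta_{X_{\reg}}$; its compatibility with the global correspondence on $X$ for restricted objects is inherited as in Theorem~\ref{thm:klt-smooth1} and Remark~\ref{rem:correspondences_coincide}.

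Finally I would transport the DGC structure along the same chain. The harmonic-metric comparison between the de~Rham and Dolbeault morphism complexes, valid for the tame, purely imaginary harmonic bundles underlying the objects of $\pHiggs_{Y_{\reg}}$ by Theorems~\ref{thm:JZM} and \ref{thm:Qflat1new}, is $G$-invariant, descends along the (ramified) cover using Proposition~\ref{prop:conndescent} and the arguments of Steps~4--6 in the proof of Theorem~\ref{thm:Qflat1new}, and is compatible with compositions up to coherent homotopy; this is the quasi-isomorphism of DGCs required above, and Simpson's formalism then yields that $\eta_{X_{\reg}}$ is full, faithful, and essentially surjective, completing the proof.

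The main obstacle is this last step carried out at the level of DGCs rather than of individual objects: one must check that the harmonic-theoretic quasi-isomorphism of morphism complexes is genuinely $G$-invariant, survives descent through the ramified cover, and respects the higher ($A_\infty$) structure, so that honest extensions — not merely their Jordan--Hölder factors — correspond under $\eta_{X_{\reg}}$. All of the analytic input needed for this already sits in \cite{GKPT17} and \cite{MR1179076}; what remains is the (genuinely fiddly) bookkeeping of equivariance, together with the verification that the extension categories of the two DGCs are indeed $\Higgs_{X_{\reg}}$ and $\LSys_{X_{\reg}}$ in the generality of smooth loci of klt spaces.
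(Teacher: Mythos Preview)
Your proposal is correct and follows essentially the same approach as the paper, which in fact gives no proof beyond a one-sentence sketch: ``The (fairly standard) proof requires a version of the nonabelian Hodge correspondence for a maximally quasi-étale cover, \cite[Thm.~3.4 and the discussion after Prop.~3.11]{GKPT17}, Theorem~\ref{thm:klt-smooth1} above, and the formalities of differential graded categories (DGCs) established in~\cite[Sect.~3]{MR1179076}.  The details are left to the reader.'' Your outline names precisely these three ingredients and fleshes out how they fit together; the extension argument for local systems from $Y_{\reg}$ to $Y$ (unipotent local monodromy of finite order, hence trivial) is the correct mechanism, though the paper leaves even this implicit.
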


A statement similar to Remark~\ref{rem:correspondences_coincide} continues to
hold for the nonabelian Hodge correspondence for semistable bundles on a
maximally quasi-étale cover.

%
% Do not edit the following line.  The text is automatically updated by
% subversion.
%
\svnid{$Id: 07-uniformisation.tex 1269 2019-02-27 07:58:09Z taji $}

\section{Proof of Theorem~\ref*{thm:ballrevisited}, uniformisation for minimal varieties}
\approvals{Behrouz & yes \\ Daniel & yes \\ Stefan & yes \\ Thomas & yes}
\label{sec:potbrv}

In this section, we prove Theorem~\ref{thm:ballrevisited}.  The strategy of
proof in principle follows \cite[Prop.~8.2 and 8.3]{GKPT15}.  The main new
difficulty stems from the fact that a general complete intersection surface in a
klt surface need not be smooth, but might have finite quotient singularities.
We maintain notation and assumptions of Theorem~\ref{thm:ballrevisited}
throughout.

\subsection*{Step 1: Reduction steps.}
\approvals{Behrouz & yes \\ Daniel & yes \\ Stefan & yes \\ Thomas & yes}

Let $π: X → X_{\can}$ be the birational crepant morphism to the canonical model
$X_{\can}$, which is also klt, and whose canonical divisor $K_{X_{\can}}$ is
ample, cf.~\cite[Thm.~3.3]{KM98}.

\begin{claim}\label{claim:7-1}
  We have the inequality
  $\what c_2(𝒯_{X_{\can}})·[K_{X_{\can}}]^{n-2} ≤ \what c_2(𝒯_X) · [K_X]^{n-2}$.
\end{claim}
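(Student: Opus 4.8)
The plan is to cut the assertion down to a surface section, to rewrite the two $ℚ$-Chern numbers there as orbifold Euler characteristics, and to reduce the inequality to a purely local statement at the finitely many points over which $π$ contracts a curve; that local statement is the only non-formal ingredient.

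\emph{Reduction to surfaces.} Since $K_{X_{\can}}$ is ample, I would fix $m ≫ 0$ with $mK_{X_{\can}}$ very ample, choose general members $D_1, …, D_{n-2} ∈ |mK_{X_{\can}}|$, and set $S_{\can} := D_1 ∩ ⋯ ∩ D_{n-2}$ and $S := π^{-1}(S_{\can}) = π^*D_1 ∩ ⋯ ∩ π^*D_{n-2}$; as $π$ is birational, the $π^*D_i ∈ |mK_X|$ are still sufficiently general for what follows. By Seidenberg's theorem \cite[Thm.~1.7.1]{BS95} and \cite[Lem.~5.17]{KM98}, $S$ and $S_{\can}$ are normal, projective, klt \emph{surfaces}, hence have quotient singularities, with $S_{\reg} = S ∩ X_{\reg}$ and $(S_{\can})_{\reg} = S_{\can} ∩ (X_{\can})_{\reg}$; moreover $π_S := π|_S \colon S → S_{\can}$ is birational, and adjunction together with crepancy of $π$ yields $π_S^*K_{S_{\can}} = K_S$, so that $π_S$ is crepant as well. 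Applying the restriction theory for $ℚ$-Chern classes on general complete-intersection surfaces \cite[Thm.~3.13]{GKPT15} to $𝒯_X$ and to $𝒯_{X_{\can}}$, and noting that the correction terms coming from the conormal sequences of $S ⊂ X$ and $S_{\can} ⊂ X_{\can}$ involve only the divisor classes $K_S, mK_X|_S$ and $K_{S_{\can}}, mK_{X_{\can}}|_{S_{\can}}$, which correspond under $π_S$ and hence have equal intersection numbers, Claim~\ref{claim:7-1} reduces to the surface inequality $\what{c}_2(𝒯_{S_{\can}}) ≤ \what{c}_2(𝒯_S)$.

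\emph{Orbifold Euler characteristics and a cancellation.} A direct computation from the definition of $ℚ$-Chern classes \cite[Sect.~3]{GKPT15} identifies, for a projective surface $Y$ with quotient singularities, $\what{c}_2(𝒯_Y)$ with the orbifold Euler characteristic $e_{\orb}(Y) := e(Y_{\reg}) + \sum_{y ∈ Y_{\sing}} \tfrac{1}{|G_y|}$, where $G_y$ denotes the local fundamental group at $y$ and $e$ the topological Euler characteristic. I would then take the minimal resolution $g \colon T → S$; the composite $g_{\can} := π_S ∘ g \colon T → S_{\can}$ is a (in general non-minimal) resolution of $S_{\can}$. Comparing $e(T) = e(S_{\reg}) + \sum_{p ∈ S_{\sing}} e(g^{-1}(p))$ with $e(T) = e(S_{\can} ∖ Z) + \sum_{q ∈ Z} e(g_{\can}^{-1}(q))$, where $Z ⊂ S_{\can}$ is the finite locus over which $g_{\can}$ is not an isomorphism and with the convention $\tfrac{1}{|G_q|} := 1$ at a smooth point, gives
$$
\what{c}_2(𝒯_S) - \what{c}_2(𝒯_{S_{\can}}) = \sum_{q ∈ Z}\Bigl( e(g_{\can}^{-1}(q)) - \tfrac{1}{|G_q|}\Bigr) - \sum_{p ∈ S_{\sing}}\Bigl( e(g^{-1}(p)) - \tfrac{1}{|G_p|}\Bigr).
$$
For $q ∈ Z$ over which $π_S$ is an isomorphism, $π_S^{-1}(q)$ is a single point $p ∈ S_{\sing}$ with $|G_p| = |G_q|$ and $g_{\can}^{-1}(q) = g^{-1}(p)$, so the corresponding terms cancel in pairs. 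For the remaining $q$, i.e.\ those in $π_S(\Exc(π_S))$, put $C_q := π_S^{-1}(q)$; then $g_{\can}^{-1}(q) = g^{-1}(C_q)$, and splitting $C_q$ along $S_{\sing}$ gives $e(g_{\can}^{-1}(q)) = e(C_q ∖ S_{\sing}) + \sum_{p ∈ C_q ∩ S_{\sing}} e(g^{-1}(p))$; after cancelling the contributions of the singular points of $S$ lying on some $C_q$ one is left with
$$
\what{c}_2(𝒯_S) - \what{c}_2(𝒯_{S_{\can}}) = \sum_{q ∈ π_S(\Exc(π_S))}\Bigl( ν(q) - \tfrac{1}{|G_q|}\Bigr), \qquad ν(q) := e(C_q ∖ S_{\sing}) + \sum_{p ∈ C_q ∩ S_{\sing}} \tfrac{1}{|G_p|}.
$$
Hence the claim comes down to the local inequality $ν(q) ≥ \tfrac{1}{|G_q|}$ for every such $q$.

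\emph{The local inequality — the main obstacle.} For a fixed such $q$ I would work with the germ $(S_{\can}, q) ≅ (\bC^2, 0)/G_q$, with $G_q < \Gl_2(\bC)$ small, and with the germ of $S$ along $C_q$, which is a crepant partial resolution of it. On $T$ the curve $g_{\can}^{-1}(q)$ is a connected, negative-definite tree of rational curves in which the strict transforms of the components of $C_q$ have discrepancy $0$ over $S_{\can}$, while the components lying over the points of $C_q ∩ S_{\sing}$ keep their discrepancies in $(-1,0]$; crepancy forces the former to be $(-2)$-curves, and, combined with the requirement that $S_{\can}$ be klt, this restricts the admissible resolution graphs enough that $ν(q) ≥ \tfrac{1}{|G_q|}$ can be verified directly via Hirzebruch--Jung theory. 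Conceptually this is the statement that a crepant partial resolution of a quotient surface singularity cannot decrease the orbifold Euler characteristic; it is precisely here that crepancy of $π$, rather than mere birationality, is used, and this is the technical heart of the argument, everything else being bookkeeping. Finally, Claim~\ref{claim:7-1} is what feeds the rest of the proof of Theorem~\ref{thm:ballrevisited}: combined with $\what{c}_1(𝒯_X)²·[K_X]^{n-2} = \what{c}_1(𝒯_{X_{\can}})²·[K_{X_{\can}}]^{n-2}$ (projection formula and $K_X = π^*K_{X_{\can}}$) and the $ℚ$-Miyaoka--Yau inequality on $X_{\can}$ (Theorem~\ref{thm:QMY}), the hypothesised equality \eqref{eq:MY} on $X$ forces equality in \eqref{eq:MY} for $X_{\can}$.
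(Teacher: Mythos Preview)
Your reduction to a surface inequality via the tangent/normal sequences and crepancy is exactly the paper's argument. The paper then dispatches the remaining inequality $\what{c}_2(𝒯_{S_{\can}}) ≤ \what{c}_2(𝒯_S)$ in one line by citing Megyesi \cite[Thm.~4.2]{Meg97}, whereas you unpack it via orbifold Euler characteristics and a local analysis at the points blown down by~$π_S$.

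Your decomposition is correct and is essentially Megyesi's own method. One point worth making explicit: the minimal resolution $T$ of $S$ coincides with the minimal resolution of $S_{\can}$, because every crepant exceptional divisor over a log-terminal surface singularity already appears on the minimal resolution (further blow-ups only raise discrepancies). Once this is said, your claim that the strict transforms of the $C_q$-components are $(-2)$-curves follows cleanly: they lie on the minimal resolution of $S_{\can}$ (hence have self-intersection $≤ -2$), and the discrepancy-zero condition together with $a(F) ≤ 0$ for all neighbouring exceptional curves forces $K_T·\tilde{C}_j ≤ 0$, whence $\tilde{C}_j^2 ≥ -2$ by adjunction. The residual local inequality $ν(q) ≥ 1/|G_q|$ then becomes a finite check on Hirzebruch--Jung strings containing a crepant curve.

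So there is no gap, but what you flag as the ``main obstacle'' is precisely the content of Megyesi's theorem; the paper simply cites it rather than reproving it. If you want a self-contained argument you should carry out the case analysis in full, otherwise just invoke \cite[Thm.~4.2]{Meg97} as the paper does.
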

\begin{proof}
  Let $S_{\can} ⊂ X_{\can}$ be a surface cut out by general members of the ample
  linear system $|m·K_{X_{\can}}|$ for $m ≫ 0$, and let $S ⊂ X$ be its preimage
  in $X$.  Notice that both $S_{\can}$ and $S$ have finite quotient klt
  singularities.  To show Claim~\ref{claim:7-1}, it is then equivalent to show
  that
  $$
  \what{c}_2(𝒯_{X_{\can}}|_{S_{\can}}) = \what{c}_2(𝒯_{X_{\can}})·[S_{\can}] ≤
  \what{c}_2(𝒯_X)·[S] = \what{c}_2(𝒯_X|_S).
  $$
  Now the exact sequences of $ℚ$-vector bundles
  $$
  0 → 𝒯_{S_{\can}} → 𝒯_{X_{\can}}|_{S_{\can}} → \sN_{S_{\can}/X_{\can}} → 0
  \quad\text{and}\quad 0 → 𝒯_S → 𝒯_X|_S → \sN_{S/X} → 0
  $$
  give equalities of $ℚ$-Chern numbers,
  $$
  \begin{matrix}
    \what{c}_2(𝒯_{X_{\can}}|_{S_{\can}}) & = & \what{c}_2(𝒯_{S_{\can}}) & + & \what{c}_1(𝒯_{S_{\can}}) · c_1(\sN_{S_{\can}/X_{\can}}) & + & \what{c}_2(\sN_{S_{\can}/X_{\can}}) \\
    \what{c}_2(𝒯_X|_S) & = & \what{c}_2(𝒯_S) & + & \what{c}_1(𝒯_S)·c_1(\sN_{S/X}) & + & \what{c}_2(\sN_{S/X}).
  \end{matrix}
  $$
  There is more that we can say about the summands on the right hand side.
  First, recall from \cite[Thm.~4.2]{Meg97} that
  $\what{c}_2(𝒯_{S_{\can}}) ≤ \what{c}_2( 𝒯_S)$.  Second, recalling that the
  morphism $S → S_{\can}$ is crepant, that $\sN_{S_{\can}/X_{\can}}$ is locally
  free and that $\sN_{S/X} ≅ (π|_S)^* \sN_{S_{\can}/X_{\can}}$, we find
  equalities,
  $$
  \what{c}_1(𝒯_{S_{\can}}) · c_1(\sN_{S_{\can}/X_{\can}}) = \what{c}_1(𝒯_S) ·
  c_1(\sN_{S(X}) \quad\text{and}\quad \what{c}_2(\sN_{S_{\can}/X_{\can}}) =
  \what{c}_2(\sN_{S/X}),
  $$
  for $ℚ$-Chern numbers on $S_{\can}$ and $S$, respectively.
  \qedhere~(Claim~\ref{claim:7-1})
\end{proof}

As a direct consequence of Claim~\ref{claim:7-1} and of the fact that $π$ is
crepant, we see that equality holds in the $ℚ$-Miyaoka-Yau inequality for
$X_{\can}$ as well.  The variety $X_{\can}$ therefore reproduces the assumptions
made in Theorem~\ref{thm:ballrevisited}, and we may assume for the remainder of
the present proof that the divisor $K_X$ is ample.

Likewise, if $γ: Y → X$ is any quasi-étale cover, recall from
\cite[Prop.~5.20]{KM98} that $Y$ is again klt.  We have remarked in
\cite[Lem.~3.16]{GKPT15} that equality holds in the $ℚ$-Miyaoka-Yau inequality
for $Y$, too.  Replacing $X$ by a suitable maximally quasi-étale cover,
\cite[Thm.~1.5]{GKP13}, we will therefore assume from now on that $X$ is
maximally quasi-étale.  Our aim is now to show that $X$ is smooth.  Once this is
established, the main claim will follow from classical uniformisation results of
Yau for smooth projective varieties.

\subsection*{Step 2: End of proof.}
\approvals{Behrouz & yes \\ Daniel & yes \\ Stefan & yes \\ Thomas & yes}

Consider the locally free sheaf $ℰ_{X_{\reg}} := Ω_{X_{\reg}}¹ ⊕ 𝒪_{X_{\reg}}$
on $X_{\reg}$ and its natural Higgs field
$$
θ_{ℰ_{X_{\reg}}} : Ω_{X_{\reg}}¹ ⊕ 𝒪_{X_{\reg}} → \bigl(Ω_{X_{\reg}}¹ ⊕
𝒪_{X_{\reg}}\bigr)⊗Ω_{X_{\reg}}¹, \qquad (a,b) ↦ (0,1) ⊗ a.
$$
Recall from \cite[Cor.~7.2]{GKPT15} that the Higgs sheaf
$(ℰ_{X_{\reg}}, θ_{ℰ_{X_{\reg}}})$ is stable with respect to the ample divisor
$K_X$.  Lemma~\ref{lem:stvbn} then asserts that the endomorphism bundle with its
natural Higgs field
$$
(ℱ_{X_{\reg}}, θ_{ℱ_{X_{\reg}}}) := \sEnd(ℰ_{X_{\reg}},\, θ_{ℰ_{X_{\reg}}})
$$
is polystable with respect to $K_X$.  Let $ℱ_X$ be the unique extension of
$ℱ_{X_{\reg}}$ as a reflexive sheaf on $X$.  By construction, $c_1(ℱ_X) = 0$.
As an immediate consequence of the assumed Equality~\eqref{eq:MY} and the
calculus of $ℚ$-Chern classes, see \cite[Lem.~3.18]{GKPT15}, we obtain the
vanishing $\what{ch}_2(ℱ_X)·[K_X]^{n-2} = \what{c}_2(ℱ_X)·[K_X]^{n-2} = 0$.  But
then Theorem~\ref{thm:Qflat1new} implies that
$$
(ℱ_{X_{\reg}}, θ_{ℱ_{X_{\reg}}}) ∈ \TPIH_{X_{\reg}}.
$$
Proposition~\ref{prop:p1} together with the assumption that $X$ is maximally
quasi-étale implies that $ℱ_X$ is locally free.  Now, as $𝒯_X$ is a direct
summand of $ℱ_X = \sEnd( Ω^{[1]}_X ⊕ 𝒪_X)$, it follows that $𝒯_X$ is locally
free.  The solution of the Lipman-Zariski conjecture for klt spaces,
\cite[Thm.~6.1]{GKKP11}, therefore asserts that $X$ is smooth.
Theorem~\ref{thm:ballrevisited} now follows from the classical result of Yau,
\cite[Rem.~(iii) on p.~1799]{MR0451180}.  \qed

%
% Do not edit the following line.  The text is automatically updated by
% subversion.
%
\svnid{$Id: 08-hyperbolicity.tex 1264 2019-02-26 17:35:29Z kebekus $}

\section{Positivity in the sheaf of reflexive differentials}\label{sec:positivity}
\subversionInfo
\approvals{Behrouz & yes \\ Daniel & yes \\ Stefan & yes \\ Thomas & yes}

Given a singular ball quotient $X$, we ask for positivity in the sheaf
$Ω^{[1]}_X$ of reflexive differentials.  In other words, we would like to answer
the following question.

\begin{question}[Positivity for singular ball quotients]\label{question:1}
  Given a morphism $f: Y → X$ of projective varieties where $X$ has canonical or
  klt singularities and where $Y$ is a smooth ball quotient of general type, is
  $Ω^{[1]}_X$ positive in a suitable sense?
\end{question}

The relevance of Question~\ref{question:1} is illustrated by
Proposition~\ref{prop:nefandratlcurves} below, which relates positivity to
non-existence of rational curves.  Moreover, it is motivated by the
hyperbolicity statement \cite[Cor.~1.4 and Sect.~9.3]{GKPT15}.  The answer to
Question~\ref{question:1} turns out to be surprisingly delicate.  One the one
hand, we show in Section~\ref{ssec:p2} that sufficiently high symmetric powers
$\Sym^{[m]} Ω^{[1]}_X$ are always ample in the sense of
Definition~\ref{def:possheaf}.  On the other hand, Section~\ref{ssec:p3} shows
by way of example that even nefness of $Ω^{[1]}_X$ fails in general.  In this
sense, we have no satisfactory answer to Question~\ref{question:1} at present.

\subsection{Consequences of positivity}
\approvals{Behrouz & yes \\ Daniel & yes \\ Stefan & yes \\ Thomas & yes}

As pointed out above, positivity in the singular sheaf $Ω^{[1]}_X$ directly
relates to hyperbolicity properties of the underlying variety.

\begin{prop}\label{prop:nefandratlcurves}
  Let $X$ be a projective klt space.
  \begin{enumerate}
  \item\label{il:u1} If $Ω^{[1]}_X$ is nef, then $X$ does not contain rational
    curves.
  \item\label{il:u2} If $Ω^{[1]}_X$ is ample, then $X$ does not contain any
    curve whose normalisation is of genus one.
  \end{enumerate}
\end{prop}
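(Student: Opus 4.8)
The plan is to argue by contradiction and, in both cases, to reduce the problem to a single degree computation on a smooth curve mapping to $X$. So suppose $C \subseteq X$ is an irreducible curve, and let $\nu \colon \widehat C \to C \hookrightarrow X$ be the normalisation $\widehat C \to C$ followed by the inclusion; thus $\widehat C$ is a smooth projective curve, $\nu$ is \emph{finite} onto $C$, and $\widehat C$ has genus $0$ in case~\ref{il:u1} and genus one in case~\ref{il:u2}. The key object is a natural \emph{nonzero} pull-back map for reflexive one-forms
\[
  d\nu \colon \nu^{*}\Omega^{[1]}_X \longrightarrow \Omega^1_{\widehat C}
\]
which, over the open set $\nu^{-1}(X_{\reg})$, restricts to the ordinary codifferential of Kähler forms; since $\nu$ is generically immersive, that ordinary codifferential is generically surjective, so $d\nu \neq 0$. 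To produce $d\nu$ on all of $\widehat C$ I would invoke the theory of reflexive differentials and their pull-back morphisms on klt spaces developed in \cite{GKKP11}. Alternatively, and closer to the techniques used earlier in this paper, one fixes a log resolution $\pi \colon \widetilde X \to X$, uses the isomorphism $\pi_{*}\Omega^1_{\widetilde X} \cong \Omega^{[1]}_X$ valid for klt spaces to obtain a map $\pi^{*}\Omega^{[1]}_X \to \Omega^1_{\widetilde X}$, lifts $\widehat C$ to the normalisation of the strict transform of $C$ (when $C \not\subseteq X_{\sing}$), and composes with the honest codifferential upstairs.

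Granting $d\nu$, set $\mathcal L := \img(d\nu) \subseteq \Omega^1_{\widehat C}$. As $d\nu \neq 0$, the sheaf $\mathcal L$ is a nonzero, rank-one, torsion-free quotient of $\nu^{*}\Omega^{[1]}_X$, hence a line bundle on $\widehat C$, and, being a subsheaf of $\Omega^1_{\widehat C}$, it satisfies $\deg \mathcal L \leq \deg \Omega^1_{\widehat C} = 2\,g(\widehat C) - 2$. Now I bring in the positivity hypothesis. In case~\ref{il:u1}: since pull-backs and quotients of nef sheaves are nef (Fact~\ref{fact:1}), nefness of $\Omega^{[1]}_X$ passes first to $\nu^{*}\Omega^{[1]}_X$ and then to the quotient $\mathcal L$, so $\deg \mathcal L \geq 0$; but $g(\widehat C) = 0$ forces $\deg \mathcal L \leq -2$, a contradiction, hence $X$ contains no rational curve. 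In case~\ref{il:u2}: as $\nu$ is finite, ampleness of $\Omega^{[1]}_X$ in the sense of Definition~\ref{def:possheaf} implies ampleness of $\nu^{*}\Omega^{[1]}_X$, and hence — by restricting $\mathcal O(1)$ from $\mathbb P(\nu^{*}\Omega^{[1]}_X)$ to the closed subscheme $\mathbb P(\mathcal L)$ — ampleness of $\mathcal L$; an ample line bundle on a curve has strictly positive degree, whereas $g(\widehat C) = 1$ gives $\deg \mathcal L \leq 0$, again a contradiction.

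The one genuinely delicate point is the non-vanishing of $d\nu$ when $C$ is contained in the singular locus of $X$, which can occur once $\dim X \geq 3$; for $\dim X = 2$ the locus $X_{\sing}$ is finite, so this never happens and the argument above is already complete. In that case $\nu^{-1}(X_{\reg}) = \emptyset$, the ``codifferential over the smooth locus'' is unavailable, and one must instead check directly that reflexive one-forms on $X$ restrict non-trivially along $C$. This can be read off by combining the functoriality built into the pull-back morphism for reflexive forms with the local structure of klt singularities: recall from Section~\ref{subsect:klt_and_Chern} that klt spaces have quotient singularities in codimension two, so at a codimension-two point of $X$ — in particular at the generic point of $C$ when $\dim X = 3$ — the sheaf $\Omega^{[1]}_X$ visibly contains a one-form ``in the direction of $C$'', which maps to a nonzero form on $\widehat C$; for general $\dim X$ one argues similarly using the extension theory of \cite{GKKP11}. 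The remaining ingredients — the degree estimate and the elementary stability of nef and ample sheaves under pull-back and passage to quotients recorded in Fact~\ref{fact:1} — are routine.
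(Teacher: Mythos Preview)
Your overall strategy is exactly the paper's: pull reflexive one-forms back to the normalisation $\widehat C$, take the image in $\Omega^1_{\widehat C}$, and use that quotients of nef (resp.\ ample) sheaves on a curve have non-negative (resp.\ positive) degree. The only substantive divergence is precisely the point you flag as delicate: proving that the pull-back map $d\nu$ is generically surjective (hence nonzero) even when $C \subseteq X_{\sing}$.

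Your treatment of this case is not a proof as written. The claim that at a quotient singularity $\Omega^{[1]}_X$ ``visibly contains a one-form in the direction of $C$'' which restricts nontrivially needs a genuine argument, and for $\dim X > 3$ you give nothing concrete. The paper resolves this uniformly with a short projection argument (Lemma~\ref{lem:pull}): working étale-locally near a general (hence smooth) point of $C$, \cite[Prop.~2.26]{GKKP11} produces a morphism $\Phi \colon X \to C$ whose restriction to $C$ is the identity. Functoriality of reflexive pull-back, \cite[Thm.~5.2]{MR3084424}, then gives a commutative triangle in which $d\iota \colon \iota^*\Omega^{[1]}_X \to \Omega^1_C$ appears as one factor of the identity $d(\Phi \circ \iota) = \Id$ on $\Omega^1_C$; hence $d\iota$ is surjective at that point. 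This makes no reference whatsoever to $X_{\reg}$ and works in every dimension. With this lemma in hand the remainder of your argument goes through unchanged, and indeed coincides with the paper's.
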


The proof of Proposition~\ref{prop:nefandratlcurves} uses the fact that there
exists a functorial pull-back functor for reflexive differential forms on klt
spaces that agrees with the standard pull-back of Kähler differentials wherever
that makes sense.  We refer to \cite[Sect.~5]{MR3084424} for a precise
reference, and to \cite[Sect.~3]{GKT16} for an overview.

\begin{lem}[Pull-back of reflexive differential is generically surjective]\label{lem:pull}
  Let $X$ be a quasi-projective klt space and $Y ⊆ X$ be a smooth subvariety,
  with inclusion $ι: Y → X$.  Then, the pull-back map $dι: ι^*Ω^{[1]}_X → Ω¹_Y$
  is generically surjective.
\end{lem}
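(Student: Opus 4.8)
The plan is to reduce the statement to generic surjectivity, that is, to showing that $\coker(d\iota)$ is a torsion sheaf on $Y$; replacing $Y$ by an irreducible component, I may assume $Y$ irreducible. If $Y\not\subseteq X_{\sing}$ the assertion is immediate: over the dense open set $Y\cap X_{\reg}$ the morphism $d\iota$ coincides with the ordinary pull-back of Kähler differentials \cite[Sect.~5]{MR3084424}, which surjects onto $\Omega^1_{Y\cap X_{\reg}}$ by the conormal sequence of the smooth subvariety $Y\cap X_{\reg}\subseteq X_{\reg}$. The real content is therefore the case $Y\subseteq X_{\sing}$, in which the regular locus of $X$ is disjoint from $Y$ and the functoriality of the reflexive pull-back must be used; this is the step I expect to be the main obstacle.

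For that case I would first fix a strong log resolution $\pi\colon\widetilde X\to X$, so that $\pi$ is an isomorphism over $X_{\reg}$ and, in particular, $\pi^{-1}(X_{\reg})$ is dense in $\widetilde X$. Then I would produce an auxiliary smooth variety dominating $Y$ through $\widetilde X$: choose an irreducible component $V$ of $\pi^{-1}(Y)$ that dominates $Y$, intersect it with $\dim V-\dim Y$ general very ample hypersurfaces of $\widetilde X$ to obtain an irreducible $W\subseteq\pi^{-1}(Y)$ with $\dim W=\dim Y$ and $W\to Y$ dominant, and take a resolution $\mu\colon\widetilde Y\to W$. This equips $\widetilde Y$ with a surjective, generically finite — hence, in characteristic zero, generically étale — morphism $\rho\colon\widetilde Y\to Y$ and with a morphism $\kappa\colon\widetilde Y\to\widetilde X$ such that $\pi\circ\kappa=\iota\circ\rho=:g$.

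The crux is to show that the composite $g^*\Omega^1_X\to g^*\Omega^{[1]}_X\to\Omega^1_{\widetilde Y}$, whose second arrow is $dg$, is the ordinary pull-back of Kähler differentials along $g$. As $g(\widetilde Y)=Y\subseteq X_{\sing}$, there is no smooth locus on which to read this off; the idea is to transport the question to $\widetilde X$. There the corresponding composite $\pi^*\Omega^1_X\to\pi^*\Omega^{[1]}_X\to\Omega^1_{\widetilde X}$ (second arrow $d\pi$) agrees with the ordinary pull-back along $\pi$ over the \emph{dense} open set $\pi^{-1}(X_{\reg})$ by compatibility of the reflexive pull-back with Kähler pull-back, and since their difference is a morphism into the torsion-free sheaf $\Omega^1_{\widetilde X}$ that vanishes on a dense open set, it vanishes identically. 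Pulling this identity back along $\kappa$, composing with the ordinary differential $d\kappa$, and using functoriality of the reflexive pull-back in the shape $dg=d\kappa\circ\kappa^*(d\pi)$ \cite[Sect.~5]{MR3084424} then gives the claim for $g$.

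Granting this, the conclusion is quick. The morphism $g=\iota\circ\rho$ is generically immersive: $\Omega^1_{Y/X}=0$ because $\iota$ is a closed immersion, and $\Omega^1_{\widetilde Y/Y}=0$ generically because $\rho$ is generically étale, whence $\Omega^1_{\widetilde Y/X}=0$ generically. Thus the ordinary pull-back $g^*\Omega^1_X\to\Omega^1_{\widetilde Y}$, whose cokernel is $\Omega^1_{\widetilde Y/X}$, is generically surjective, and so is $dg$ by the previous paragraph. Finally, writing $dg=d\rho\circ\rho^*(d\iota)$ by functoriality and observing that $d\rho\colon\rho^*\Omega^1_Y\to\Omega^1_{\widetilde Y}$ is an isomorphism over a dense open set (again because $\rho$ is generically étale between smooth varieties of equal dimension), I find that $\rho^*(d\iota)$ is generically surjective; as $\rho^*\coker(d\iota)=\coker(\rho^*(d\iota))$ is then torsion and $\rho$ is dominant, $\coker(d\iota)$ is itself torsion, which is the assertion.
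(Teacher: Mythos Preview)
Your argument is correct, but the paper takes a much shorter and more conceptual route. Rather than splitting into the cases $Y\subseteq X_{\sing}$ and $Y\not\subseteq X_{\sing}$ and passing through a resolution of singularities, the paper observes that the statement is étale-local and then invokes \cite[Prop.~2.26]{GKKP11}: after replacing $X$ by a suitable étale neighbourhood one may assume there exists a morphism $\Phi\colon X\to Y$ with $\Phi\circ\iota\cong\Id_Y$. Functoriality of the reflexive pull-back then gives a factorisation
\[
\Id_{\Omega^1_Y}\;=\;d(\Phi\circ\iota)\;=\;d\iota\circ\iota^*(d\Phi),
\]
so $d\iota$ is in fact \emph{surjective} (not just generically so) on this étale neighbourhood. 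The retraction trick handles both of your cases at once, avoids resolution of singularities and the auxiliary construction of $\widetilde Y$, and yields a stronger conclusion.

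Your approach does have independent interest: the key identity you establish on $\widetilde X$---that the composite $\pi^*\Omega^1_X\to\pi^*\Omega^{[1]}_X\xrightarrow{d\pi}\Omega^1_{\widetilde X}$ coincides with the ordinary Kähler pull-back because the two agree on the dense open $\pi^{-1}(X_{\reg})$ and land in a torsion-free sheaf---is a useful device for transporting questions about reflexive differentials through a resolution, and it does not require knowing about étale-local retractions.
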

\begin{proof}
  The problem is local in the étale topology.  More precisely, since we aim to
  prove \emph{generic} surjectivity, we are free to replace $X$ by any
  Zariski-open subset that intersects $Y$.  Recalling that pull-back of
  reflexive differentials is functorial, \cite[Sect.~5.1]{MR3084424} or
  \cite[Sect.~14]{KS18}, we may also replace $X$ and $Y$ by finite étale covers.
  We will now project to $Y$.  To be precise, passing to an étale cover if
  necessary, \cite[Prop.~2.26]{GKKP11} allows us to assume without loss of
  generality that there exists a morphism $Φ : X → Y$ whose restriction to $Y$
  is isomorphic.  Surjectivity of $dι$ then comes out of the following diagram,
  $$
  \xymatrix{ %
    Ω¹_Y \ar[rr]_{dι} \ar@{->>}@/^4mm/[rrrr]^{d (Φ◦ι)} && ι^* Ω^{[1]}_X
    \ar[rr]_{ι^* dΦ} && Ω¹_Y, }
  $$
  whose commutativity is again due to the functoriality of reflexive pull-back.
\end{proof}

\begin{proof}[Proof of Proposition~\ref{prop:nefandratlcurves}]
  We prove \ref{il:u2} only.  Assume that $Ω^{[1]}_X$ is ample and let $C ⊂ X$
  be any irreducible curve with inclusion $ι: C → X$ and normalisation
  $η: \wtilde{C} → C$.  Write $f := ι◦η$ and notice that Lemma~\ref{lem:pull}
  implies that the pull-back map $df: f^{[*]} Ω^{[1]}_X → Ω¹_{\wtilde{C}}$ is
  generically surjective.  Using that $\wtilde{C}$ is smooth, observe that
  $$
  f^{[*]} Ω^{[1]}_X ≅ \factor{f^* Ω^{[1]}_X}{\tor},
  $$
  as both sheaves are locally free by \cite[Cor.~on p.~148]{OSS}.  Consequently,
  $f^{[*]} Ω^{[1]}_X$ is a quotient of the ample sheaf $f^* Ω^{[1]}_X$ and
  therefore itself ample.  We conclude that $\deg Ω¹_{\wtilde{C}} >0$, and hence
  $g(\wtilde C) > 1$, as claimed.
\end{proof}

\subsection{Positivity of symmetric differentials}\label{ssec:p2}
\approvals{Behrouz & yes \\ Daniel & yes \\ Stefan & yes \\ Thomas & yes}

As pointed out in the introduction, we show that the sheaves of reflexive
symmetric differentials of sufficiently high degree are always ample on singular
ball quotients.

\begin{prop}[Positivity of symmetric differentials for singular ball quotients]\label{prop:H1}
  Let $f: Y → X$ be a quasi-étale cover, where $X$ is projective with klt
  singularities.  If $Y$ is smooth and if $Ω¹_Y$ is ample\footnote{For instance,
    this is the case when $Y$ is a smooth ball quotient.}, then the following
  will hold.
  \begin{enumerate}
  \item\label{il:f1} The sheaf $f^{[*]} Ω^{[1]}_X$ is ample.
  \item\label{il:f2} The sheaf $\Sym^{[m]} Ω^{[1]}_X$ is ample for $m ≫ 0$
    sufficiently divisible.
  \end{enumerate}
\end{prop}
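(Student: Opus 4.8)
The plan is to reduce both parts to the single fact that the reflexive pull-back $f^{[*]}\Omega^{[1]}_X$ is isomorphic to $\Omega^1_Y$; granting this, \ref{il:f1} is immediate and \ref{il:f2} follows by descending ampleness along the finite cover $f$. To prove the isomorphism, recall that the functorial pull-back of reflexive differentials on klt spaces — which agrees with the usual pull-back of Kähler differentials wherever the latter is defined, see \cite[Sect.~5]{MR3084424} and the discussion before Lemma~\ref{lem:pull} — provides a morphism $df : f^{[*]}\Omega^{[1]}_X \to \Omega^1_Y$ of reflexive sheaves on $Y$. Since $f$ is quasi-étale there is an open $W \subseteq X_{\reg}$ with $\codim_X(X \setminus W) \geq 2$ over which $f$ is étale, and over $f^{-1}(W)$ the morphism $df$ is the pull-back isomorphism $f^{*}\Omega^1_W \xrightarrow{\sim} \Omega^1_{f^{-1}(W)}$ of the étale map $f^{-1}(W) \to W$. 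As $f$ is finite, $f^{-1}(W)$ has complement of codimension at least two in $Y$, so $df$ is an isomorphism of reflexive sheaves off a codimension-two set, hence an isomorphism. Because $Y$ is smooth this already gives $f^{[*]}\Omega^{[1]}_X \cong \Omega^{[1]}_Y = \Omega^1_Y$, which is ample by hypothesis — this is \ref{il:f1}.

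For \ref{il:f2}, the same codimension-one comparison shows that reflexive symmetric powers commute with $f^{[*]}$: both $f^{[*]}\bigl(\Sym^{[m]}\Omega^{[1]}_X\bigr)$ and $\Sym^{[m]}\bigl(f^{[*]}\Omega^{[1]}_X\bigr)$ are reflexive on $Y$ and restrict to $\Sym^m\Omega^1_{f^{-1}(W)}$ over $f^{-1}(W)$, so
$$
f^{[*]}\bigl(\Sym^{[m]}\Omega^{[1]}_X\bigr) \;\cong\; \Sym^m\Omega^1_Y ,
$$
and this is ample for every $m$, being a quotient of the ample locally free sheaf $(\Omega^1_Y)^{\otimes m}$. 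It then remains to descend ampleness along $f$: with $\sS := \Sym^{[m]}\Omega^{[1]}_X$ one passes to $\bP(\sS) = \Proj_X \Sym^\bullet \sS$ with its tautological $\sO_{\bP(\sS)}(1)$, notes that the base change of $\bP(\sS)$ to $Y$ maps finitely and surjectively onto $\bP(\sS)$ pulling $\sO(1)$ back to $\sO(1)$, and invokes that a line bundle is ample once its pull-back under a finite surjective morphism is.

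The main obstacle is precisely this descent step: $\sS$ is not locally free and the reflexive hull does not commute with $\Proj$, so the $\bP$-bundle of $f^{*}\sS$ has to be reconciled with that of $f^{[*]}\sS \cong \Sym^m\Omega^1_Y$, of which we actually know the $\sO(1)$ to be ample; one only has a generically bijective map $f^{*}\sS \to f^{[*]}\sS$ to work with. I expect this is what forces the hypothesis "$m$ sufficiently large and divisible" in \ref{il:f2}: after replacing $\Sym^{[m]}$ by $\Sym^{[km]}$ for suitable $k$ the cleaner reflexive operations can be used throughout, or one argues instead through global generation of $\sF \otimes \Sym^{[m]}\Omega^{[1]}_X$ for $m \gg 0$ and all coherent $\sF$. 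It is convenient here that a klt space has rational, hence Cohen--Macaulay, singularities, which helps control the torsion of $f^{*}\sS$; note however that miracle flatness requires a regular base rather than merely a Cohen--Macaulay one, so $f$ itself need not be flat and some care is needed.
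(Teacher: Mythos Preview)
Your argument for \ref{il:f1} is correct and essentially identical to the paper's: both invoke the functorial pull-back $df : f^{[*]}\Omega^{[1]}_X \to \Omega^1_Y$ of reflexive differentials on klt spaces, observe that it is an isomorphism over the big open set where $f$ is étale, and conclude by reflexivity of source and target.

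For \ref{il:f2} you take a genuinely different route, and the obstacle you identify at the end is a real gap. The natural map $f^*\sS \to f^{[*]}\sS$ is generically an isomorphism, but its image is only $f^*\sS/\tor$, which sits inside the double dual and may be strictly smaller; so there is no immediate comparison between $\bP(f^{[*]}\sS)$ (whose $\sO(1)$ you know to be ample) and $\bP(\sS)\times_X Y = \bP(f^*\sS)$ (which is what maps finitely onto $\bP(\sS)$). Your descent step therefore does not go through as written, and your speculation about how the divisibility hypothesis might repair it is not on the right track.

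The paper sidesteps this by running the argument in the opposite direction, via push-forward rather than pull-back. A theorem of Ancona, \cite[Thm.~3.1]{MR670921}, asserts that if $f$ is finite surjective and $\Omega^1_Y$ is ample, then $f_* \Sym^m \Omega^1_Y$ is an ample sheaf on $X$ for all $m$ sufficiently large and divisible --- this is where that hypothesis actually enters. Over $X_{\reg}$ the projection formula gives
$$
f_* \Sym^m \Omega^1_Y \;\cong\; \Sym^m \Omega^1_X \;\oplus\; \bigl(\Sym^m \Omega^1_X \otimes f_*\sO_Y/\sO_X\bigr),
$$
and since $f_* \Sym^m \Omega^1_Y$ is reflexive on $X$ (push-forward of a locally free sheaf along a finite map from a normal variety), this splitting extends to all of $X$ with $\Sym^m \Omega^1_X$ replaced by its reflexive hull $\Sym^{[m]}\Omega^{[1]}_X$. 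The latter is then a direct summand of an ample sheaf, hence itself ample. What this buys over your approach is that one never has to compare $\bP$ of a sheaf with $\bP$ of its reflexive hull: the passage through reflexive hulls happens on $X$ alone, at the level of a direct-sum decomposition, where it is harmless.
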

\begin{proof}
  To prove \ref{il:f1}, recall from \cite[Thm.~1.3]{MR3084424} or
  \cite[Thm.~14.1]{KS18} that there exists a pull-back morphism for reflexive
  differential forms, $df: f^{[*]} Ω^{[1]}_X → Ω¹_Y$, which is an isomorphism on
  the big open set where $f$ is étale.  Since both sheaves are reflexive, $df$
  is actually an isomorphism, and the first assertion follows.
  
  To prove \ref{il:f2}, recall from \cite[Thm.~3.1]{MR670921} that the sheaf
  $f_* \Sym^m Ω¹_Y$ is ample for all sufficiently large and divisible natural
  numbers $m$.  Over the smooth part of $X$, the projection formula gives
  $$
  f_* \Sym^m Ω¹_Y ≅ f_*f^* \Sym^m Ω¹_X ≅ \Sym^m Ω¹_X ⊕ \left( \Sym^m
    Ω¹_X ⊗ \factor{f_* 𝒪_Y}{𝒪_X} \right).
  $$
  Since $f_* \Sym^m Ω¹_Y$ is reflexive, this implies
  $$
  f_* \Sym^m Ω¹_Y ≅ \Sym^{[m]} Ω^{[1]}_X ⊕ \left( \Sym^m Ω¹_X ⊗ \factor{f_*
      𝒪_Y}{𝒪_X} \right)^{**}.
  $$
  It follows that $\Sym^{[m]} Ω^{[1]}_X$ is ample as a direct summand of an
  ample sheaf.
\end{proof}

\subsection{Failure of positivity in general}\label{ssec:p3}
\approvals{Behrouz & yes \\ Daniel & yes \\ Stefan & yes \\ Thomas & yes}

In spite of the positivity result established in Proposition~\ref{prop:H1},
Question~\ref{question:1} has a negative answer in general.  First examples
already exist in dimension two.

\begin{example}[A klt ball quotient whose reflexive cotangent sheaf is not ample]
  The example given in \cite[Sect.~9.4]{GKPT15} shows that there exists a klt
  ball quotient surface $S$ that is covered by curves whose normalisations are
  elliptic.  As a consequence of Proposition~\ref{prop:nefandratlcurves} above,
  $Ω^{[1]}_S$ is not ample.
\end{example}

\begin{example}[A canonical ball quotient whose reflexive cotangent sheaf is not nef]
  Recall from \cite{MR2586735} that there exists a fake projective plane $Y$
  that admits an automorphism $σ$ of order three.  Recall from
  \cite[p.~233]{MR527834} that $Y$ is a smooth ball quotient.  The quotient
  surface $X := Y/\langle σ \rangle$ has been studied by Keum.  He proves in
  \cite[Prop.~3.1]{MR2443971} that $X$ has exactly three singular points, which
  are canonical of type $A_2$, and that $K_X$ is Cartier with $[K_X]² = 3$.
  With this description of $X$ at hand, the following Proposition~\ref{prop:H2}
  shows that $Ω^{[1]}_X$ cannot possibly be nef.
\end{example}

\begin{prop}\label{prop:H2}
  Let $X$ be a projective surface with non-trivial canonical singularities where
  $Ω^{[1]}_X$ is nef.  If $\:[K_X]² = 3$, then $X$ has exactly one singular
  point.
\end{prop}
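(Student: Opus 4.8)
The plan is to pass to the minimal resolution, turn the nefness hypothesis and the $\mathbb{Q}$-Miyaoka--Yau inequality into tight numerical constraints on the Du Val points, and then rule out two or more singular points by a direct argument on a curve pushed down from the resolution.

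First I would let $\pi\colon \tilde X \to X$ be the minimal resolution. Since $X$ has canonical singularities, $\pi$ is crepant, so $K_{\tilde X} = \pi^{*}K_X$ and $[K_{\tilde X}]^2 = [K_X]^2 = 3$. Nefness of $\Omega^{[1]}_X$ forces its determinant $\mathcal{O}_X(K_X)$ to be nef, hence big and nef; thus $\tilde X$ is a minimal surface of general type and $X$ is its canonical model, with $K_X$ ample and Cartier, $\chi(\mathcal{O}_X) = \chi(\mathcal{O}_{\tilde X})$, and $e(\tilde X) = 12\chi(\mathcal{O}_{\tilde X}) - 3$. The singular points $p_1,\dots,p_k$ are the images of the maximal contractible configurations of $(-2)$-curves. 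The $\mathbb{Q}$-Chern number $\what{c}_2(\mathcal{T}_X)$ is then bracketed from two sides: Theorem~\ref{thm:QMY} gives $\what{c}_2(\mathcal{T}_X) \geq \tfrac13\what{c}_1(\mathcal{T}_X)^2 = 1$, while nefness of $\Omega^{[1]}_X$ yields non-negativity of $\what{c}_1(\Omega^{[1]}_X)^2 - \what{c}_2(\Omega^{[1]}_X)$, that is $\what{c}_2(\mathcal{T}_X) \leq \what{c}_1(\mathcal{T}_X)^2 = 3$. Writing $\what{c}_2(\mathcal{T}_X) = e(\tilde X) - \sum_{j}\mu_j$ with $\mu_j = b_j + 1 - \tfrac{1}{|G_j|} \geq \tfrac32$, where $b_j$ is the number of exceptional curves over $p_j$ and $G_j$ its local fundamental group, and invoking the Noether bound $\chi(\mathcal{O}_{\tilde X}) \leq 4$, one obtains $\sum_j \mu_j \in [\,12\chi(\mathcal{O}_{\tilde X}) - 6,\ 12\chi(\mathcal{O}_{\tilde X}) - 4\,]$, which leaves only finitely many possibilities for $k$ and for the singularity types.

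These numerical constraints are not by themselves decisive — the three-$A_2$ configuration on the Keum surface satisfies every one of them — so one must use nefness of $\Omega^{[1]}_X$ more than through the inequality for $\what{c}_2$. The approach I would take is to extract a strictly positive contribution of each singular point to a ``defect'' coming from the reflexive pull-back of differentials: fix a suitable ample $\tilde H \in \Div(\tilde X)$ meeting the exceptional curves with small multiplicity, let $\Gamma$ be a general smooth member of a small multiple of $|\tilde H|$, and put $C := \pi(\Gamma) \subset X$, a curve through all the $p_j$ whose normalisation is $\pi|_\Gamma\colon \Gamma \to C$. By functoriality of reflexive pull-back on klt spaces together with Lemma~\ref{lem:pull}, the map $(\pi|_\Gamma)^{[*]}\Omega^{[1]}_X \to \Omega^1_\Gamma$ is generically surjective, so its image is $\Omega^1_\Gamma(-D)$ for an effective divisor $D$ on $\Gamma$; since $(\pi|_\Gamma)^{*}\Omega^{[1]}_X$ is nef (Fact~\ref{fact:1}) and $\Omega^1_\Gamma(-D)$ is a torsion-free quotient of it, we get $2g(\Gamma) - 2 \geq \deg D$. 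The step I expect to be the main obstacle is the local computation, at each point where $\Gamma$ crosses the exceptional locus of a Du Val singularity, showing that $D$ picks up there a contribution large enough that, after substituting $2g(\Gamma)-2 = (K_{\tilde X}+\Gamma)\cdot\Gamma = (\pi^{*}K_X)\cdot\Gamma + \Gamma^2$ and using $[K_X]^2 = 3$, the inequality $2g(\Gamma)-2 \geq \deg D$ becomes impossible as soon as $X$ has two or more singular points; carrying this out — in particular choosing $\tilde H$ and the multiple so that $2g(\Gamma)-2$ stays small relative to the accumulated defect — requires the explicit minimal resolutions of the $A_n$, $D_n$ and $E_n$ singularities. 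Since the singularities are assumed non-trivial, the conclusion is that $X$ has exactly one singular point.
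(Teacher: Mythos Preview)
Your proposal has a genuine gap. The first half (bracketing $\what{c}_2$ between $1$ and $3$) you already recognise as inconclusive, and the second half — the curve-defect argument — is left as a hope: you neither carry out the local computation of $\deg D$ at a Du Val point nor explain why $2g(\Gamma)-2$ can be kept small enough to force a contradiction. In fact, since $\Gamma$ must meet every exceptional configuration and $K_{\tilde X}\cdot\Gamma \geq 0$, there is no mechanism here for making $2g(\Gamma)-2$ small independently of the number of singular points, so it is unclear the argument can be closed at all.

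The paper's proof avoids curves entirely and is much shorter. The key idea you are missing is to work with the rank-two reflexive pull-back $ℰ := π^{[*]} Ω^{[1]}_X$ on the whole resolution $\wtilde X$ and compare determinants. First one checks that $ℰ$ is nef (pull-back of a nef sheaf, then pass to the reflexive hull via torsion-free quotients on test curves), so $c_1(ℰ)^2 \ge 0$. Next, the natural map $dπ: ℰ \to Ω^1_{\wtilde X}$ is generically an isomorphism but fails to be so along every exceptional $(-2)$-curve, because $Ω^1_{\wtilde X}$ is not nef there; hence $\det ℰ \cong ω_{\wtilde X}(-\sum_i F_i)$ with each $F_i$ a strictly effective combination of the exceptional curves over the $i$-th singular point. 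Since the exceptional configurations are pairwise disjoint and negative definite, $[F_i]·[F_j]=0$ for $i\ne j$ and $[F_i]^2<0$; moreover $[F_i]·[K_{\wtilde X}]=0$, so Riemann--Roch forces $[F_i]^2$ to be even, i.e.\ $[F_i]^2 \le -2$. Then
\[
0 \;\le\; c_1(ℰ)^2 \;=\; \bigl[π^*K_X - \textstyle\sum_i F_i\bigr]^2 \;=\; [K_X]^2 + \sum_{i=1}^k [F_i]^2 \;\le\; 3 - 2k,
\]
which gives $k\le 1$; together with the non-triviality hypothesis, $k=1$. No case analysis of ADE types, no orbifold Euler characteristic, and no curve restriction is needed.
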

\begin{proof}
  Recall from \cite[Thm.~4.20]{KM98} or \cite[p.~347]{Reid87} that $X$ has
  \emph{Du Val surface singularities}.  Among other things, this implies that
  $K_X$ is Cartier, and that the minimal desingularisation $π: \wtilde{X} → X$
  is \emph{crepant}, $K_{\wtilde{X}} = π^* K_X$.  Denote the singular points of
  $X$ by $x_1$, …, $x_k$.  The exceptional set $E ⊂ \wtilde{X}$ therefore
  consists of a number $(-2)$-curves arranged in $k$ connected components,
  $$
  E = (E_{1,1} ∪ ⋯ ∪ E_{1,n_1}) \sqcup ⋯ \sqcup (E_{k,1} ∪ ⋯ ∪ E_{1,n_k}).
  $$
  We aim to analyse $Ω^{[1]}_X$ via its reflexive pull-back
  $ℰ := π^{[*]} Ω^{[1]}_X$, which is a locally free sheaf on $\wtilde{X}$,
  cf.~\cite[Chap.~II, Lem.~1.1.10]{OSS}.

  \begin{claim}\label{cl:p1}
    The sheaf $ℰ$ is nef.  In particular, $c_1(ℰ)² ≥ 0$.
  \end{claim}
  \begin{proof}[Proof of Claim~\ref{cl:p1}]
    Consider the natural map from $𝒜 := π^* Ω^{[1]}_X$ into its reflexive hull
    $ℰ$, which factors as follows
    $$
    \xymatrix{%
      𝒜 \ar@{->>}[r] & 𝒜/\tor \: \ar@{^(->}[r] & ℰ %
    }
    $$
    Nefness of $ℰ$ can now be concluded using Fact~\ref{fact:1}: as a pull-back
    of the nef sheaf $Ω^{[1]}_X$, we see that $𝒜$ nef and then so is its
    quotient $ℬ := 𝒜/\tor$.  Since $\wtilde{X}$ is a surface, the torsion free
    sheaf $ℬ$ is locally free outside of a finite set by \cite[Cor.~on
    p.~148]{OSS}, and therefore agrees with $ℰ$ away from that set.  If $C$ is
    any smooth curve and $γ: C → X$ any non-constant morphism, the natural map
    $γ^*ℬ → γ^*ℰ$ is thus generically injective.  Using that $C$ is a curve,
    decompose
    $$
    γ^*ℬ = \underbrace{(γ^*ℬ)/\tor}_{\text{locally free, nef}} ⊕ \tor(γ^*ℬ).
    $$
    The injection $(γ^*ℬ)/\tor ↪ γ^* ℰ$ then shows nefness.
    \qedhere~(Claim~\ref{cl:p1})
  \end{proof}
  
  The existence of $(-2)$-curves in $\wtilde{X}$ shows that $Ω¹_{\wtilde{X}}$ is
  not nef.  Claim~\ref{cl:p1} thus shows that the canonical pull-back morphism
  for reflexive differentials, $dπ: ℰ → Ω¹_{\wtilde{X}}$, which exists by
  \cite[Thm.~4.3]{GKKP11} or \cite[Thm.~1.9]{KS18}, is not isomorphic at the
  generic point of any such curve.  Somewhat more quantitatively, we find
  strictly positive $λ_{i,j} ∈ ℕ$ such that
  $$
  \textstyle \det ℰ ≅ ω_{\wtilde X}(-\sum_{i,j} λ_{i,j}·E_{i,j}) =
  ω_{\wtilde X}(-\sum_i F_i), \quad\text{for } F_i := \sum_j λ_{i,j}·E_{i,j}.
  $$
  Observe that $[F_i]·[F_j] =0$ for $i≠j$ and that $[F_i]² < 0$ for all $i$.
  Better still, since $[F_i]·[K_{\wtilde X}] = 0$, Riemann-Roch on $\wtilde X$
  asserts that the numbers $[F_i]²$ are all even.  But then
  $$
  c_1(ℰ)² = \bigl[ π^* K_X - \textstyle \sum_{i=1}^k F_i \bigr]² =
  \underbrace{[K_X]²}_{= 3} + \sum_{i=1}^k\underbrace{[F_i]²}_{\mathclap{≤
      -2\text{, even}}} \overset{\text{Claim~\ref{cl:p1}}}{≥} 0.
  $$
  We obtain that $k = 1$, as claimed.
\end{proof}

\end{document}